\renewcommand{\l@section}{\@tocline{1}{6pt}{0pc}{2.5em}{\bfseries}}
\renewcommand{\l@subsection}{\@tocline{2}{0pt}{2.5em}{3em}{}}
\newtheorem{theorem}{Theorem}[section]
\newtheorem{claim}[theorem]{Claim}
\newtheorem{lemma}[theorem]{Lemma}
\newtheorem{prop}[theorem]{Proposition}
\newtheorem{cor}[theorem]{Corollary}
\newtheorem{construction}{Construction}[section]
\theoremstyle{definition}
\newtheorem{definition}[theorem]{Definition}
\theoremstyle{remark}
\newtheorem{remark}[theorem]{Remark}
\numberwithin{equation}{section}
\title[Special K\"ahler Metric Singularities on the Hitchin base]{Local Models for Special K\"ahler Metric Singularities Along the Discriminant Locus of the $\mathrm{SL}_2(\mathbb{C})$ Hitchin Base}
\author{Zhenxi Huang}
\address{College of Information Science and Technology, Jinan University, Guangzhou 510632, People's Republic of China}
\email{huangzhenxi@jnu.edu.cn}
\author{Shuo Wang$^\dagger$}
\address{School of Mathematical Sciences, University of Science and Technology of China, Hefei, 230026, People's Republic of China}
\email{ws220122@mail.ustc.edu.cn}
\author{Bin Xu}
\address{School of Mathematical Sciences, University of Science and Technology of China, Hefei, 230026, People's Republic of China}
\email{bxu@ustc.edu.cn}
\thanks{$^\dagger$ S.W. is the corresponding author. Author names are listed in alphabetical order.}
\date{\today}
\begin{document}

\begin{abstract}
Freed \cite{freed1999special} formulated the theory of special K\"ahler structures; in particular, the regular locus of the $\mathrm{SL}_2(\mathbb{C})$ Hitchin base $\mathcal{B}$ carries such a structure, while the associated metric $\omega_{\mathrm{SK}}$ is singular along the discriminant locus $\mathcal{D}$. Baraglia-Huang \cite{baraglia2017special} computed its Taylor expansion near points of $\mathcal{B}\setminus\mathcal{D}$ and Hitchin \cite{hitchin2019critical} then defined subsystems attached to those components of $\mathcal{D}$ whose spectral curves have only nodal singularities; these components form smooth strata with induced special K\"ahler structures. We show that for these specific strata the canonical special K\"ahler metric has logarithmic asymptotics in transversal directions, whereas its tangential part converges to a metric on the stratum agreeing with the one from Hitchin's subsystems. Along any complex line through the origin of $\mathcal{B}$ and a point of the stratum, the metric restricts to a flat cone metric with cone angle $\pi$ at the origin only. Finally, the special K\"ahler potential extends continuously to these strata, and is of class $C^1$ on a portion of them.
\end{abstract}

\maketitle
\tableofcontents

\section{Introduction}
\addtocontents{toc}{\protect\setcounter{tocdepth}{-1}}

In 1987, Hitchin \cite{hitchin1987stable,hitchin1987self} demonstrated that the moduli space of polystable Higgs bundles over a compact Riemann surface admits the structure of an algebraically completely integrable system. By varying the Riemann surface, the rank, or the structure group, one obtains a broad class of examples, now commonly referred to as \emph{Hitchin systems}, which carry complete hyperk\"ahler metrics known as \emph{Hitchin metrics}.  

Among these, the $\mathrm{SL}_2(\mathbb{C})$ Hitchin system $\mathcal{M}$ is the simplest case and was the primary setting of Hitchin's original work \cite{hitchin1987self}. Its base $\mathcal{B}$, the \textit{Hitchin base}, decomposes into a \textit{regular part} $\mathcal{B}^{\mathrm{reg}}$, where the fiber is an abelian variety, and a \textit{discriminant locus} $\mathcal{D}:=\mathcal{B}\setminus\mathcal{B}^{\mathrm{reg}}$, where the fiber degenerates. In 1999, Freed \cite{freed1999special} proved that the regular locus of the base of any algebraically completely integrable system carries a canonical special K\"ahler structure. When applied to the Hitchin system, these results equip $\mathcal{B}^{\mathrm{reg}}$ with such a structure.

The resulting special K\"ahler geometry of $\mathcal{B}^{\mathrm{reg}}$, and in particular the semi-flat hyperk\"ahler metric it induces on the total space $\mathcal{M}^{\mathrm{reg}}$, has been extensively employed to approximate the Hitchin metric along rays in $\mathcal{M}^{\mathrm{reg}}$ with exponential convergence. For instance, the studies by Mazzeo--Swoboda--Weiss--Witt \cite{mazzeo2019asymptotic}, Dumas--Neitzke \cite{DN2019}, Fredrickson \cite{Fredrickson2020}, and He--Horn--Li \cite{he2025asymptotics} were all inspired by the explicit construction of the Hitchin metric from the perspective of supersymmetric gauge theories by Gaiotto--Moore--Neitzke \cite{GMN2010, gaiotto2013wall}.

Between 2019 and 2021, Hitchin \cite{hitchin2019critical,hitchin2021integrable} introduced a novel perspective on the Hitchin system by focusing on certain algebraically completely integrable subsystems. The bases of these subsystems are smooth strata of the discriminant locus $\mathcal{D}$, characterized by the property that their associated spectral curves possess only nodal singularities. A natural question arising from Hitchin's construction concerns the singular models for the special K\"ahler metric $\omega_{\mathrm{SK}}$ near these strata. In this paper, we focus on the strata studied by Hitchin and demonstrate that $\omega_{\mathrm{SK}}$ admits a well-defined limit on each, coinciding with the induced special K\"ahler metric, while exhibiting logarithmic singularities along all transverse directions. We also prove that the K\"ahler potential of $\omega_{\mathrm{SK}}$ extends continuously to these strata and, on certain specific loci, is of class $C^1$. Finally, we examine the radial behavior of $\omega_{\mathrm{SK}}$ near the origin of the Hitchin base $\mathcal{B}$. Specifically, we show that along any complex line spanned by a point within one of these strata, $\omega_{\mathrm{SK}}$ converges to a K\"ahler metric possessing a cone singularity with a cone angle of $\pi$ at the origin.

The asymptotic behavior of the special K\"ahler metric near these strata reveals a parallel with the geometry of a projection hyperk\"ahler manifold $M^{2n}$ with a holomorphic Lagrangian fibration $M^{2n} \to N \coloneqq \mathbb{CP}^n$, as studied by Tosatti and Zhang \cite{TZ2020}. In particular, Tosatti and Zhang \cite[Theorem 3.4]{TZ2020} proved that hyperk\"ahler metrics with torus fiber volumes shrinking to zero collapse to a special K\"ahler metric $\omega_{\mathrm{SK}}$ on the regular part $N_0$, which becomes singular along the discriminant locus $S := N \setminus N_0$. To address these singularities, they introduced a modification $\nu: \widetilde{N} \to N$ that resolves the discriminant locus as a simple normal crossing divisor in $\widetilde{N}$. They demonstrated that the pull-back metric $\nu^* \omega_{\mathrm{SK}}$ is controlled by a logarithmic estimate near the preimage $\nu^{-1}(S)$. In this manuscript, we take a first step toward establishing such local models in the Hitchin setting. Specifically, following the framework established by Hitchin \cite{hitchin2019critical,hitchin2021integrable}, we consider the strata $\mathcal{B}_d$ for $1 \leq d \leq 2g-2$ and $\mathcal{B}_{\mathrm{ab}}$ as introduced in Definition~\ref{def stratum}. For these strata, we construct explicit local asymptotic models that describe the behavior of $\omega_{\mathrm{SK}}$ near $\mathcal{D}$.

\subsection*{Subintegrable Systems in the Hitchin System}

We briefly review a result of Hitchin \cite{hitchin2019critical} concerning subintegrable systems within the standard $\mathrm{SL}_2(\mathbb{C})$ Hitchin system. Let $C$ be a compact Riemann surface of genus $g \geq 2$, and let $\pi \colon K_C \to C$ denote its canonical line bundle. We denote by $\mathcal{M}$ the moduli space of polystable $\mathrm{SL}_2(\mathbb{C})$ Higgs bundles over $C$ with trivial determinant, which has dimension $6g-6$. The \textit{Hitchin map} is defined as
\[
\mathrm{Hit} \colon \mathcal{M} \to \mathcal{B} \coloneqq H^0\left(C, K_C^2\right), \quad (E, \Phi) \mapsto \det(\Phi),
\]
where a Higgs bundle $(E, \Phi)$ consists of a rank-$2$ holomorphic vector bundle $E$ with trivial determinant and a trace-free Higgs field $\Phi \in H^0\big(C, \operatorname{End}(E) \otimes K_C\big)$ satisfying the polystability condition with respect to $\Phi$-invariant subbundles.

\begin{definition}
\label{def stratum}
We define the following smooth strata of the Hitchin base $\mathcal{B}$:
\begin{align*}
\mathcal{B}_d &\coloneqq \Big\{q \in \mathcal{B} \mid q \text{ has } d \text{ double zeros and } 4g-4-2d \text{ simple zeros} \Big\}, \text{ for } 0 \leq d \leq 2g-3;\\[2mm]
\mathcal{B}_{2g-2} &\coloneqq \Big\{q \in \mathcal{B} \mid q \text{ has } 2g-2 \text{ double zeros and admits no global square root} \Big\}; \\[2mm]
\mathcal{B}_{\mathrm{ab}} &\coloneqq \Big\{ q \in \mathcal{B} \mid q = \psi^2 \text{ for some } \psi \in H^0(C, K_C) \text{ with only simple zeros} \Big\}.
\end{align*}
\end{definition}

Note that $\mathcal{B}^{\mathrm{reg}}$ coincides with $\mathcal{B}_0$, and the smoothness of these strata is established in Propositions~\ref{zero locus Bd}, \ref{zero locus B2g-2}, and \ref{zero locus Bab}. Hitchin \cite{hitchin2019critical,hitchin2021integrable} investigated algebraically completely integrable systems over $\mathcal{B}_d$ for $0 \leq d \leq 2g-2$ and similarly, Proposition~\ref{subintegral system Bab} provides such an integrable system over $\mathcal{B}_{\mathrm{ab}}$. These integrable systems can be viewed as subintegrable systems of the original Hitchin system $\mathcal{M}$. More concretely, for $1 \leq d \leq 2g-2$, and following He--Horn--Li \cite[Theorem 2.10]{he2025asymptotics}, the restricted Hitchin map $\mathrm{Hit} \colon \mathcal{M}_d \to \mathcal{B}_d$ defines an algebraically completely integrable system of dimension $6g-6-2d$. Here, the total space is given by
\[
\mathcal{M}_d \coloneqq \Big\{(E, \Phi) \in \mathcal{M} \Bigm| \operatorname{div}(\Phi) = D_q,\ q = \det(\Phi) \in \mathcal{B}_d\Big\}.
\]
where $D_q \coloneqq \Big\lfloor \frac{1}{2}\operatorname{div}(q) \Big\rfloor$ denotes the floor of half the divisor of $q$. And in the case of $\mathcal{B}_{\mathrm{ab}}$, the map $\mathrm{Hit} \colon \mathcal{M}_{\mathrm{ab}} \to \mathcal{B}_{\mathrm{ab}}$ defines a system of dimension $2g$, where
\[
\mathcal{M}_{\mathrm{ab}} \coloneqq \Big\{(E, \Phi) \in \mathcal{M} \Bigm| E = L \oplus L^{-1}, \Phi = \operatorname{diag}(\psi, -\psi), L \in \operatorname{Pic}^0(C), \psi^2 \in \mathcal{B}_{\mathrm{ab}}\Big\}.
\]
Let $\omega_{\mathrm{SK}}$ be the special K\"ahler form on $\mathcal{B}^{\mathrm{reg}}$, and denote by $\omega_{\mathrm{SK},\mathcal{B}_d}$ and $\omega_{\mathrm{SK},\mathcal{B}_{\mathrm{ab}}}$ the special K\"ahler metrics on $\mathcal{B}_d$ and $\mathcal{B}_{\mathrm{ab}}$ induced by the aforementioned subintegrable systems, respectively. While these constructions are well-established, the behavior of $\omega_{\mathrm{SK}}$ near these strata is not yet fully understood. This paper offers some preliminary observations toward clarifying this picture.

\subsection*{Scope of the Paper}

In this work, we analyze the asymptotic behavior of $\omega_{\mathrm{SK}}$ in the vicinity of
\[
\left(\bigsqcup_{1 \leq d \leq 2g-2} \mathcal{B}_d\right) \sqcup \mathcal{B}_{\mathrm{ab}} \subseteq \mathcal{B},
\]
where the associated spectral curves possess exclusively nodal singularities. This restriction is essential for our approach, as it ensures the smoothness of the relevant strata and aligns with the setting established in Hitchin \cite{hitchin2019critical} to build the corresponding subintegrable systems. Cases where quadratic differentials have zeros of higher multiplicity are not addressed here; a discussion of the limitations and open problems beyond the current setting is provided in the Subsection ``Limitations of the Paper'' and in Section~\ref{Future}.

\subsection*{Special K\"ahler Structure on $\mathcal{B}^{\mathrm{reg}}$}

Following Freed \cite{freed1999special}, a \emph{special K\"ahler manifold} is a K\"ahler manifold $(M, \omega_{\mathrm{SK}}, I)$ equipped with a flat, torsion-free symplectic connection $\nabla$ satisfying $\mathrm{d}_\nabla I = 0$. In this framework, two holomorphic coordinate systems $\big\{\mathfrak{z}^i\big\}$ and $\big\{\mathfrak{w}_i\big\}$ are said to be \emph{conjugate} if 
\[
\nabla\bigl(\Re(\mathfrak{z}^i)\bigr) = \nabla\bigl(\Re(\mathfrak{w}_i)\bigr) = 0 \quad \text{and} \quad \omega_{\mathrm{SK}} = -\sum_i \Re(\mathrm{d}\mathfrak{z}^i) \wedge \Re(\mathrm{d}\mathfrak{w}_i).
\]
Following Freed \cite[Theorem~3.4]{freed1999special}, any algebraically completely integrable system possesses a natural special K\"ahler structure on its regular locus $\mathcal{B}^{\mathrm{reg}}$. Geometrically, an integrable system over $\mathcal{B}^{\mathrm{reg}}$ is a family of complex tori, locally characterized by a family of period matrices. As noted by Hitchin \cite{hitchin2019critical}, the special K\"ahler structure provides a differential-geometric framework to study the variation of these period matrices. More concretely, in a conjugate special coordinate system $\big\{\mathfrak{z}^i, \mathfrak{w}_i\big\}$, the period matrix $\tau_{ij} \coloneqq \partial \mathfrak{w}_i / \partial \mathfrak{z}^j$ determines the special K\"ahler metric via
\[
\omega_{\mathrm{SK}} = -\sum_i \Re(\mathrm{d}\mathfrak{z}^i) \wedge \Re(\mathrm{d}\mathfrak{w}_i) = \frac{\mathrm{i}}{2} \sum_{i,j} \Im(\tau_{ij}) \, \mathrm{d}\mathfrak{z}^i \wedge \mathrm{d}\bar{\mathfrak{z}}^j.
\]
	
In the particular case of the $\mathrm{SL}_2(\mathbb{C})$ Hitchin system, Hitchin \cite[Section~8]{hitchin1987self} identified the fiber over $q \in \mathcal{B}^{\mathrm{reg}}$ as a torsor over the Prym variety $\operatorname{Prym}(\pi \colon \Sigma_q \to C)$ where $\Sigma_q$ is the spectral curve associated with $q$, and $\pi \colon \Sigma_q \to C$ denotes the spectral cover. Since the period matrix of the Prym variety $\operatorname{Prym}(\pi \colon \Sigma_q \to C)$ is explicitly computable following \cite[Section~12.4]{birkenhake2013complex}, the special K\"ahler metric on $\mathcal{B}^{\mathrm{reg}}$ is obtained following the construction~\ref{construction regular}.

Fix a point $q_0 \in \mathcal{B}^{\mathrm{reg}}$ and let $\mathcal{U} \subseteq \mathcal{B}^{\mathrm{reg}}$ be a small simply connected neighborhood of $q_0$. For each $q \in \mathcal{U}$, let $\sigma \colon \Sigma_q \to \Sigma_q$ denote the sheet-exchange involution of the cover $\pi$, and let $\sigma_*$ (resp.\ $\sigma^*$) be the induced map on homology (resp.\ cohomology). We denote the $(\pm 1)$-eigenspaces of $\sigma^*$ and $\sigma_*$ by $H^{1,0}(\Sigma_q)^\pm$ and $H_1(\Sigma_q,\mathbb{R})^\pm$, respectively. A basis $\left\{a_i \big|_q, b_i \big|_q\right\}$ of $H_1(\Sigma_q, \mathbb{R})^-$ is said to be \emph{symplectic} if the intersection pairing on $H_1(\Sigma_q, \mathbb{R})$ satisfies
\[
\left\langle a_i \big|_q, b_j \big|_q \right\rangle = \delta_{ij}, \quad \left\langle a_i \big|_q, a_j \big|_q \right\rangle = \left\langle b_i \big|_q, b_j \big|_q \right\rangle = 0 \qquad \text{for all } i,j.
\]

\begin{construction}[See Baraglia--Huang {\cite[Section~2]{baraglia2017special}} and Mazzeo--Swoboda--Weiss--Witt {\cite[Section~2.3]{mazzeo2019asymptotic}}]
\label{construction regular}
\leavevmode
\begin{enumerate}
    \item[Step (1)] Choose a symplectic basis $\left\{\lambda_i \cdot a_i \big|_{q_0}, \mu_i \cdot b_i \big|_{q_0}\right\}_{i=1}^{3g-3}$ of $H_1(\Sigma_{q_0}, \mathbb{R})^-$, where $\left\{a_i \big|_{q_0}, b_i \big|_{q_0}\right\}$ is a basis of $H_1(\Sigma_{q_0}, \mathbb{Z})^-$ and $\lambda_i, \mu_i \in \mathbb{R}$ are fixed scaling factors.

    \item[Step (2)] Extend this choice to a family of symplectic bases $\left\{\lambda_i \cdot a_i \big|_q, \mu_i \cdot b_i \big|_q\right\}_{i=1}^{3g-3}$ of $H_1(\Sigma_q, \mathbb{R})^-$ for each $q \in \mathcal{U}$, such that the cycles $a_i \big|_q, b_i \big|_q$ depend smoothly on $q$ while the factors $\lambda_i, \mu_i$ remain constant.

    \item[Step (3)] Define the holomorphic functions $\big\{\mathfrak{z}^i, \mathfrak{w}_i\big\}_{i=1}^{3g-3}$ on $\mathcal{U}$ via the periods of the tautological $1$-form $\theta$ on $\operatorname{Tot}(K_C)$:
    \[
    \mathfrak{z}^i(q) \coloneqq \lambda_i \int_{a_i|_q} \theta, \qquad \mathfrak{w}_i(q) \coloneqq -\mu_i \int_{b_i|_q} \theta.
    \]
    These functions constitute a conjugate special coordinate system on $\mathcal{U}$.

    \item[Step (4)] Let $\left\{\omega_j \big|_q\right\}_{j=1}^{3g-3}$ be the basis of $H^{1,0}(\Sigma_q)^-$ satisfying $\lambda_k \int_{a_k|_q} \omega_j \big|_q = \delta_{kj}$ for each $q \in \mathcal{U}$. The special K\"ahler metric on $\mathcal{B}^{\mathrm{reg}}$ is then expressed as
    \[
    \omega_{\mathrm{SK}} = \frac{\mathrm{i}}{2} \sum_{i,j=1}^{3g-3} \Im(\tau_{ij}) \, \mathrm{d}\mathfrak{z}^i \wedge \mathrm{d}\bar{\mathfrak{z}}^j,
    \quad \text{where} \quad 
    \tau_{ij}(q) \coloneqq \mu_j \int_{b_j|_q} \omega_i \big|_q.
    \]
\end{enumerate}
\end{construction}

\subsection*{Results of the Paper}

The primary objective of this paper is to extend Construction~\ref{construction regular} to the strata $\mathcal{B}_d$ (for $1 \le d \le 2g-2$) and $\mathcal{B}_{\mathrm{ab}}$. Our main results for these respective cases are established in Theorems~\ref{main Bd}, \ref{main B2g-2}, and \ref{main Bab}. Below, we provide a concise summary of these developments and their implications.

\begin{theorem}[Proposition~\ref{zero locus Bd}, Theorem~\ref{main Bd}, Proposition~\ref{zero locus B2g-2}, Theorem~\ref{main B2g-2}]
Fix a point $q_0 \in \mathcal{B}_d$ for $0 \leq d \leq 2g-2$. There exists a neighborhood $\mathcal{U} \subseteq \mathcal{B}$ of $q_0$ and $3g-3$ pairs of analytic functions $\big\{\mathfrak{z}^i, \mathfrak{w}_i\big\}_{i=1}^{3g-3}$ on $\mathcal{U} \cap \mathcal{B}^{\mathrm{reg}}$, possibly multivalued, such that:
\begin{enumerate}
    \item After a suitable reordering, the functions $\big\{\mathfrak{z}^i\big\}_{i=1}^{3g-3}$ and $\big\{\mathfrak{w}_i\big\}_{i=1}^{3g-3-d}$ extend to holomorphic functions on the entire neighborhood $\mathcal{U}$. For each $k$ satisfying $3g-3-d < k \leq 3g-3$, the function $\mathfrak{w}_k$ is multi-valued, with its branches related by the transformation
    \begin{equation}
    \label{1e3}
    \mathfrak{w}_k' = \mathfrak{w}_k + n_k \mathfrak{z}^k, \qquad n_k \in \mathbb{Z}.
    \end{equation}
   
    \item The intersection $\mathcal{B}_d \cap \mathcal{U}$ is the common zero locus of the functions $\big\{\mathfrak{z}^{3g-3-d+i}\big\}_{i=1}^{d}$, which is a complex submanifold of $\mathcal{B}$ of codimension $d$.
    
    \item The special K\"ahler metric $\omega_{\mathrm{SK}}$ on $\mathcal{U} \cap \mathcal{B}^{\mathrm{reg}}$ is quasi-isometric to the local model
    \[
    \omega_{\mathrm{mod}} \coloneqq \frac{\mathrm{i}}{2} \left( \sum_{i=1}^{3g-3-d} \mathrm{d}\mathfrak{z}^i \wedge \mathrm{d}\bar{\mathfrak{z}}^i + \sum_{k=3g-3-d+1}^{3g-3} -\log\big|\mathfrak{z}^k\big|\, \mathrm{d}\mathfrak{z}^k \wedge \mathrm{d}\bar{\mathfrak{z}}^k \right),
    \]
    in the sense that there exist uniform positive constants $C_1$ and $C_2$ such that the estimate $C_1 \omega_{\mathrm{mod}} \leq \omega_{\mathrm{SK}} \leq C_2 \omega_{\mathrm{mod}}$ holds on $\mathcal{U} \cap \mathcal{B}^{\mathrm{reg}}$. More precisely, in these coordinates, $\omega_{\mathrm{SK}}$ is expressed as
\begin{equation}
\label{1e1}
\omega_{\mathrm{SK}} = \frac{\mathrm{i}}{2} \Big( \cdots \,\mathrm{d}\mathfrak{z}^i \, \cdots\Big)
\begin{pmatrix}
A & B\\[4pt]
B^{\mathsf{T}} & D
\end{pmatrix}
\begin{pmatrix} \vdots \\ \mathrm{d}\bar{\mathfrak{z}}^i \\ \vdots \end{pmatrix},
\end{equation}
where $A$ and $D$ are real symmetric matrices of size $3g-3-d$ and $d$, respectively. The following asymptotic behavior holds as $q \in \mathcal{B}^{\mathrm{reg}}$ approaches $q_0$:
    \begin{enumerate}
    \item Every entry of $A(q)$ and $B(q)$, as well as every off-diagonal entry of $D(q)$, extends continuously to $q_0$. Furthermore, the submatrix $A(q_0)$ is positive definite.
    
    \item There exist uniform positive constants $C_1$ and $C_2$ such that
    \[
     -C_1 \log \big| \mathfrak{z}^{3g-3-d+k} \big| < D_{kk}(q) < -C_2 \log \big| \mathfrak{z}^{3g-3-d+k} \big|
    \]
    holds for all $q \in \mathcal{U} \cap \mathcal{B}^{\mathrm{reg}}$ and $k = 1, \dots, d$.
    \end{enumerate}
\end{enumerate}
\end{theorem}

\begin{remark}[Single-valuedness of the metric]
By the monodromy transformation in \eqref{1e3}, both the differentials $\mathrm{d}\mathfrak{z}^i$ and the imaginary parts $\Im(\tau_{ij}) \coloneqq \Im(\partial \mathfrak{w}_i / \partial \mathfrak{z}^j)$ are single-valued on $\mathcal{U} \cap \mathcal{B}^{\mathrm{reg}}$. Consequently, $\omega_{\mathrm{SK}}$ is a well-defined K\"ahler form on $\mathcal{U} \cap \mathcal{B}^{\mathrm{reg}}$.
\end{remark}

\begin{remark}[Isolated singularities of special K\"ahler metrics of complex dimension one]
Haydys \cite[Theorem~1.1]{Haydys2015} and Callies--Haydys \cite[Theorem~5]{CH2020} provided local models for isolated singularities of complex one-dimensional special K\"ahler structures with meromorphic associated cubic differentials. More recently, Sun--Zhang \cite[Theorem~5.1]{SZ2024} classified all isolated singularities of complex one-dimensional special K\"ahler metrics that arise as local Gromov--Hausdorff limits of hyperk\"ahler $4$-manifolds.
\end{remark}

The situation for $q_0 \in \mathcal{B}_{\mathrm{ab}}$ differs in several key aspects, as described below.

\begin{theorem}[Proposition~\ref{zero locus Bab}, Theorem~\ref{main Bab}]
Fix a point $q_0 \in \mathcal{B}_{\mathrm{ab}}$. There exists a neighborhood $\mathcal{U} \subseteq \mathcal{B}$ of $q_0$ and $3g-3$ pairs of analytic functions $\big\{\mathfrak{z}^i, \mathfrak{w}_i\big\}_{i=1}^{3g-3}$ on $\mathcal{U} \cap \mathcal{B}^{\mathrm{reg}}$, possibly multivalued, such that:
\begin{enumerate}
    \item After a suitable reordering, the functions $\big\{\mathfrak{z}^i\big\}_{i=1}^{3g-3}$ and $\big\{\mathfrak{w}_i\big\}_{i=1}^{g}$ extend to holomorphic functions on the entire $\mathcal{U}$. The intersection $\mathcal{B}_{\mathrm{ab}} \cap \mathcal{U}$ is the common zero locus of the functions $\big\{\mathfrak{z}^i\big\}_{i=g+1}^{3g-3}$, which constitutes a complex submanifold of $\mathcal{B}$ of dimension $g$.
    
    \item The special K\"ahler metric $\omega_{\mathrm{SK}}$ on $\mathcal{U} \cap \mathcal{B}^{\mathrm{reg}}$ is locally expressed in the block-matrix form \eqref{1e1}, where $A$ and $D$ are real symmetric matrices of size $g$ and $2g-3$, respectively. As $q \in \mathcal{B}^{\mathrm{reg}}$ approaches $q_0$, the following asymptotic behavior holds:
    \begin{enumerate}
        \item Every entry of $A(q)$ and $B(q)$ extends continuously to $q_0$, and the submatrix $A(q_0)$ is positive definite.
        
        \item The diagonal entries, as well as the first sub- and super-diagonal entries of $D(q)$, diverge to infinity. Their precise asymptotics are specified in Theorem~\ref{main Bab}, while all other entries of $D(q)$ extend continuously to $q_0$.
    \end{enumerate}
\end{enumerate}
\end{theorem}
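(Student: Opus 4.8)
The plan is to run Construction \ref{construction regular} on a degenerating family of spectral curves and read off the asymptotics of the period matrix from the geometry of the nodal limit. First I would fix $q_0=\psi_0^2\in\mathcal{B}_{\mathrm{ab}}$ and analyze the spectral curve $\Sigma_{q_0}\subseteq\operatorname{Tot}(K_C)$ cut out by $\lambda^2=\psi_0^2$. Since $(\lambda-\psi_0)(\lambda+\psi_0)=0$, this curve is \emph{reducible}: it is the union of the graphs $C_\pm$ of $\pm\psi_0$, two copies of $C$ glued at the $2g-2$ simple zeros $p_1,\dots,p_{2g-2}$ of $\psi_0$, which become nodes. This reducibility is the decisive feature distinguishing $\mathcal{B}_{\mathrm{ab}}$ from $\mathcal{B}_{2g-2}$, where $q_0$ also has $2g-2$ double zeros but the spectral curve stays irreducible. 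For $q\in\mathcal{B}^{\mathrm{reg}}$ near $q_0$ each double zero of $q_0$ splits into two simple zeros of $q$, so $\Sigma_q$ is smooth of genus $4g-3$ and carries $2g-2$ vanishing cycles $\delta_1,\dots,\delta_{2g-2}$, one encircling each pair of colliding branch points. Each $\delta_k$ is sent to its reverse by the sheet involution $\tau$, hence lies in $H_1(\Sigma_q,\mathbb{R})^-$.

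The next step is to produce an adapted symplectic basis of $H_1(\Sigma_q,\mathbb{R})^-$ respecting the degeneration. Because $\Sigma_{q_0}$ is reducible, the vanishing cycles obey a single homological relation $\sum_{k=1}^{2g-2}\delta_k=0$ (for suitable orientations the small loops around all nodes on one sheet bound), so they span a space of dimension exactly $2g-3=\operatorname{codim}_{\mathcal{B}}\mathcal{B}_{\mathrm{ab}}$. I would complete $\delta_1,\dots,\delta_{2g-3}$ to a symplectic basis whose complementary part descends, in the limit, to the surviving homology $H_1(C_+\sqcup C_-,\mathbb{R})^-\cong H_1(C,\mathbb{R})$ of rank $2g$, furnishing the $g$ symplectic pairs responsible for the block $A$. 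Taking the $a$-cycles of the transversal block to be $\delta_1,\dots,\delta_{2g-3}$ makes the coordinates $\mathfrak{z}^{g+j}=\lambda_{g+j}\int_{\delta_j}\theta$ single-valued and vanishing precisely on $\mathcal{B}_{\mathrm{ab}}\cap\mathcal{U}$, giving item (1); the dual $b$-cycles are long cycles threading through pairs of necks, and their $\theta$-periods $\mathfrak{w}_{g+j}$ acquire Picard--Lefschetz monodromy, hence are multi-valued.

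To extract the asymptotics I would localize near each node using the plumbing model $w^2=z^2-\epsilon_k$ in a coordinate with $\psi_0=z\,\mathrm{d}z$, so that the node-opening parameter is $t_k\sim\epsilon_k$. A direct computation gives $\int_{\delta_k}\theta\sim\epsilon_k$, whence $\mathfrak{z}^{g+k}\sim\epsilon_k$; and the normalized differential pinned to $\delta_k$ has a dual period whose imaginary part grows like $-\tfrac{1}{2\pi}\log|\epsilon_k|$, so that $\Im\tau_{g+k,\,g+k}\sim-\tfrac{1}{2\pi}\log|\mathfrak{z}^{g+k}|$. Periods over the surviving cycles converge to honest periods on the normalization $C_+\sqcup C_-$, so the entries of $A$ and $B$ extend continuously to $q_0$; moreover $A(q_0)$ is the imaginary part of the period matrix of the copy of $\operatorname{Jac}(C)$ carried by the abelian subsystem $\mathrm{Hit}\colon\mathcal{M}_{\mathrm{ab}}\to\mathcal{B}_{\mathrm{ab}}$ of Proposition \ref{subintegral system Bab}, hence positive definite by the Riemann bilinear relations, proving item (2a).

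The heart of the argument, and the step I expect to be the main obstacle, is the off-diagonal divergence in $D$. In the independent-node situation of $\mathcal{B}_d$ disjoint vanishing cycles force a purely diagonal logarithmic blow-up; here the single relation $\sum_k\delta_k=0$ obstructs choosing dual cycles that each cross a single neck. Ordering the nodes along a path and taking the $b$-cycle dual to $\delta_j$ to thread the consecutive necks $j$ and $j+1$, the divergent part of the block $D=\Im\tau$ becomes the period matrix of the dual graph (two vertices, $2g-2$ edges) in its path basis, which is tridiagonal with diagonal entries $\sim\ell_j+\ell_{j+1}$ and first off-diagonal entries $\sim-\ell_{j+1}$, where $\ell_k$ is the logarithmically divergent length of neck $k$, with $\ell_k\sim-\tfrac{1}{2\pi}\log|\mathfrak{z}^{g+k}|$ for $k\le 2g-3$. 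This simultaneously produces the divergent diagonal and first sub-/super-diagonal of $D$ and shows all remaining entries converge, yielding item (2b); tracking the subleading terms in the plumbing expansion then fills in the precise table. The delicate points will be verifying that the chosen basis is genuinely symplectic for $H_1^-$ rather than merely for the full $H_1$, controlling the bounded remainders uniformly over all necks simultaneously, and pinning down the signs and coefficients of the tridiagonal pattern against the relation $\sum_k\delta_k=0$.
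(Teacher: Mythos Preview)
Your proposal is essentially the paper's own approach: identify the reducible nodal limit $\Sigma_{q_0}=C_+\cup C_-$, use the relation $\sum_k\delta_k=0$ among the $2g-2$ vanishing cycles to explain why the transversal rank is $2g-3$, take $b$-cycles that thread consecutive necks $k,k+1$, and read off the tridiagonal logarithmic divergence of $D$ from the local plumbing periods. The paper packages the same computation slightly differently, via a residue table for the dual meromorphic forms $\omega_{g+k}$ (each has simple poles at the two consecutive nodes $\widetilde p_{2k-1}$ and $\widetilde p_{2k+1}$), which is exactly your graph-Laplacian/path-basis picture seen from the dual side.

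One concrete point on the concern you flag about symplecticity: with your choice of $a$-cycles equal to the raw vanishing cycles $\delta_1,\dots,\delta_{2g-3}$, the threading $b$-cycle through necks $j,j+1$ has intersection $+1$ with $\delta_j$ and $-1$ with $\delta_{j+1}$, so the pairing is bidiagonal rather than the identity. The paper's fix is to take the $a$-cycles to be the \emph{cumulative sums} $\mathfrak{a}_{g+k}=\sum_{j\le k}\delta_j$ (equation \eqref{5e2}); then $\langle\mathfrak{a}_{g+k},\mathfrak{b}_{g+l}\rangle=\delta_{kl}$ on the nose. Correspondingly the special coordinates are $\mathfrak{z}^{g+k}=\sum_{j\le k}\mathfrak{y}^{g+j}$ rather than the individual node parameters $\mathfrak{y}^{g+j}\sim\epsilon_j$, and the asymptotic table is stated in terms of the $\mathfrak{y}$'s (so the diagonal entry $D_{kk}$ blows up like $-\log|\mathfrak{y}^{g+k}|-\log|\mathfrak{y}^{g+k+1}|$, matching your $\ell_k+\ell_{k+1}$, not like a single $-\log|\mathfrak{z}^{g+k}|$). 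With that adjustment your outline goes through as written.
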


Following the notation above. Since $\mathcal{B}$ is naturally an affine space, we can canonically identify the tangent spaces $T_q \mathcal{B}$ for all $q \in \mathcal{U}$ and restrict $\omega_{\mathrm{SK}}(q)$ to the fixed subspace $T_{q_0}\mathcal{B}_d$ (resp.\ $T_{q_0}\mathcal{B}_{\mathrm{ab}}$). Under this identification, the special K\"ahler metric $\omega_{\mathrm{SK}}(q)$ converges to a genuine metric on $T_{q_0}\mathcal{B}_d$ (resp.\ $T_{q_0}\mathcal{B}_{\mathrm{ab}}$), thanks to the positive definiteness of the submatrix $A(q_0)$. Strictly speaking, the limit is given by
\[
\lim_{\mathcal{B}^{\mathrm{reg}} \ni q \to q_0} \left( \omega_{\mathrm{SK}}(q)\Big|_{T_{q_0}\mathcal{B}_d \; (\text{resp.\ } T_{q_0}\mathcal{B}_{\mathrm{ab}})} \right) = \frac{\mathrm{i}}{2} \Big( \cdots \,\mathrm{d}\mathfrak{z}^i \, \cdots\Big)A(q_0) 
\begin{pmatrix} 
\vdots \\ \mathrm{d}\bar{\mathfrak{z}}^i \\ \vdots 
\end{pmatrix}.
\]

\begin{cor}[Corollaries~\ref{metric Bd}, \ref{metric B2g-2}, and \ref{metric Bab}]
Fix $q_1 \in \mathcal{B}_d$ for $0 \leq d \leq 2g-2$, and $q_2 \in \mathcal{B}_{\mathrm{ab}}$. The special K\"ahler metrics induced by the corresponding subintegrable systems on these strata, denoted by $\omega_{\mathrm{SK}, \mathcal{B}_d}$ and $\omega_{\mathrm{SK}, \mathcal{B}_{\mathrm{ab}}}$ respectively, satisfy
\[
\omega_{\mathrm{SK},\mathcal{B}_d}(q_1) = \lim_{\mathcal{B}^{\mathrm{reg}} \ni q \to q_1} \left( \omega_{\mathrm{SK}}(q) \Big|_{T_{q_1}\mathcal{B}_d} \right), \quad \omega_{\mathrm{SK},\mathcal{B}_{\mathrm{ab}}}(q_2) = \lim_{\mathcal{B}^{\mathrm{reg}} \ni q \to q_2} \left( \omega_{\mathrm{SK}}(q) \Big|_{T_{q_2}\mathcal{B}_{\mathrm{ab}}} \right).
\]
\end{cor}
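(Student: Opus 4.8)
The plan is to show that both the induced metric $\omega_{\mathrm{SK},\mathcal{B}_d}(q_1)$ and the restricted limit $\lim_{q\to q_1}\omega_{\mathrm{SK}}(q)\big|_{T_{q_1}\mathcal{B}_d}$ are computed by one and the same period matrix — namely that of the Prym variety of the \emph{normalized} spectral cover $\widetilde\pi\colon\widetilde\Sigma_{q_1}\to C$ — and hence agree. By the main theorem the right-hand side is already known to equal $\frac{\mathrm{i}}{2}(\cdots\,\mathrm{d}\mathfrak{z}^i\,\cdots)\,A(q_1)\,(\cdots)^{\mathsf T}$ with $A(q_1)_{ij}=\Im\tau_{ij}(q_1)$ positive definite, so the task reduces to identifying this finite block with the intrinsic data of Hitchin's sub-integrable system.

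First I would make the intrinsic construction explicit. The fiber of $\mathrm{Hit}\colon\mathcal{M}_d\to\mathcal{B}_d$ over $q_1$ is a torsor over $\operatorname{Prym}(\widetilde\pi\colon\widetilde\Sigma_{q_1}\to C)$, an abelian variety of dimension $3g-3-d$ (consistent with the stated dimension $6g-6-2d$ of the system); applying Construction \ref{construction regular} verbatim to this family yields $\omega_{\mathrm{SK},\mathcal{B}_d}$ from a symplectic basis $\{a_i,b_i\}_{i=1}^{3g-3-d}$ of $H_1(\widetilde\Sigma_{q_1},\mathbb{R})^-$, special coordinates $\widetilde{\mathfrak z}^i=\lambda_i\int_{a_i}\theta$, and period matrix $\widetilde\tau_{ij}=\mu_j\int_{b_j}\widetilde\omega_i$, where $\{\widetilde\omega_i\}$ is the basis of $H^{1,0}(\widetilde\Sigma_{q_1})^-$ dual to the $a_i$. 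Then $\omega_{\mathrm{SK},\mathcal{B}_d}(q_1)=\frac{\mathrm{i}}{2}\sum\Im(\widetilde\tau_{ij})\,\mathrm{d}\widetilde{\mathfrak z}^i\wedge\mathrm{d}\overline{\widetilde{\mathfrak z}}^j$. For $\mathcal{B}_{\mathrm{ab}}$ the same scheme applies, with the normalization being two disjoint copies of $C$ and the Prym replaced by $\operatorname{Jac}(C)$.

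Next I would match coordinates and periods across the degeneration. The $d$ double zeros of $q_1$ produce $d$ nodes of $\Sigma_{q_1}$, each carrying a vanishing cycle; in the notation of the main theorem these are exactly the $a$-cycles whose periods are the defining functions $\mathfrak{z}^{3g-3-d+k}$ of $\mathcal{B}_d$. The remaining cycles $a_i\big|_q,b_i\big|_q$ ($i\le 3g-3-d$) may be chosen to avoid the shrinking necks, so that as $q\to q_1$ they limit to a symplectic basis of $H_1(\widetilde\Sigma_{q_1},\mathbb{R})^-$; since $\theta$ is intrinsic, the restriction of $\mathfrak z^i$ to $\mathcal{B}_d$ then agrees with $\widetilde{\mathfrak z}^i$. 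Simultaneously, the holomorphic differentials $\omega_i\big|_q$ that stay bounded are precisely those extending across the nodes, i.e.\ the pullbacks from $\widetilde\Sigma_{q_1}$; there are exactly $3g-3-d$ of them, and they converge to the dual basis $\widetilde\omega_i$. Consequently the finite periods converge, $\tau_{ij}(q)\to\widetilde\tau_{ij}(q_1)$ for $i,j\le 3g-3-d$, whence $A(q_1)_{ij}=\Im\widetilde\tau_{ij}(q_1)$ and the desired equality follows.

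The principal obstacle is the precise control of the holomorphic differentials and their $b$-periods across the degeneration: one must verify that the surviving differentials converge to those on the normalization with \emph{no} residual contribution from the vanishing cycles, which is a statement about the degeneration of the Hodge structure of $\Sigma_q$ to that of $\widetilde\Sigma_{q_1}$. The logarithmically divergent $b$-periods dual to the vanishing cycles are exactly the terms $D_{kk}\sim-\log|\mathfrak z^{3g-3-d+k}|$, and they must be cleanly separated from the finite $A$-block. For $\mathcal{B}_{\mathrm{ab}}$ there is the further subtlety that the limiting spectral curve is \emph{reducible}, two copies of $C$ exchanged by the sheet-involution $\tau$; tracking the $(-1)$-eigenspace of $\tau^*$ through this reducible degeneration — and checking that it selects the copy of $H^{1,0}(C)$ underlying $\operatorname{Jac}(C)$ — is the delicate step needed to identify the limit with $\omega_{\mathrm{SK},\mathcal{B}_{\mathrm{ab}}}$.
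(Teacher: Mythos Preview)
Your strategy is correct and matches the paper's, but you are framing as a delicate degeneration problem what the paper has arranged to be essentially tautological. In the paper's construction (Subsections~\ref{step1 Bd}--\ref{steps3-4 Bd} and their analogues), the $3g-3$ pairs of cycles on $\widetilde{\Sigma_q}$ are built so that the sub-collection $\{\mathfrak{a}_i|_q,\mathfrak{b}_i|_q\}_{i\in B}$ (with $B=\{1,\dots,g\}\cup\{g+d+1,\dots,3g-3\}$) is \emph{by design} a symplectic basis of $H_1(\widetilde{\Sigma_q},\mathbb{R})^-$ for every $q\in\mathcal{U}\cap\mathcal{B}_d$, not merely in the limit. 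Likewise, Lemma~\ref{dual 1forms} constructs the dual forms $\omega_i|_q$ uniformly for all $q\in\mathcal{U}$, and shows that for $i\in B$ they are genuinely holomorphic on $\widetilde{\Sigma_{q_0}}$; the continuity of $\tau_{ij}(q)$ for $i,j\in B$ is then part of Theorem~\ref{main Bd} itself. Once this is in place, the corollary is the one-line observation that the expression~\eqref{3e12} for $\omega_{\mathrm{SK},\mathcal{B}_d}$ and the expression~\eqref{3e11} for the limit of the restriction are literally the same formula.

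Consequently, your ``principal obstacle'' --- controlling the surviving differentials and separating them from the logarithmic $D$-block --- is not an additional analytic step but is absorbed into the upfront construction and into the proof of the main theorem. You do not need a separate Hodge-theoretic degeneration argument: the cycles never enter the shrinking necks to begin with, and the dual forms are defined directly at $q_0$ rather than obtained as limits. The same remark applies to $\mathcal{B}_{\mathrm{ab}}$: the reducibility of $\widetilde{\Sigma_{q_0}}\cong C\sqcup C$ is handled by the explicit basis in Lemma~\ref{symplectic basis ab}, so tracking the $(-1)$-eigenspace through the degeneration is again unnecessary.
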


The K\"ahler potential of $\omega_{\mathrm{SK}}$ on $\mathcal{B}^{\mathrm{reg}}$ is a globally defined smooth function, with an explicit formula provided by Baraglia--Huang \cite[Proposition~5.5]{baraglia2017special}:
\begin{equation}
\label{1e2}
\mathcal{K}_0 \colon \mathcal{B}^{\mathrm{reg}} \to \mathbb{R}, \quad q \mapsto \frac{\mathrm{i}}{4} \int_{\Sigma_q} \theta \wedge \bar{\theta}.
\end{equation}
The K\"ahler potential $\mathcal{K}_d$ for $0 \leq d \leq 2g-2$ (resp.\ $\mathcal{K}_{\mathrm{ab}}$) of $\omega_{\mathrm{SK},\mathcal{B}_d}$ (resp.\ $\omega_{\mathrm{SK},\mathcal{B}_{\mathrm{ab}}}$) is likewise a globally defined smooth function on $\mathcal{B}_d$ (resp.\ $\mathcal{B}_{\mathrm{ab}}$), given by the same integral formula as demonstrated by Hitchin \cite[Proposition~4.2]{hitchin2021integrable}. This suggests that $\mathcal{K}_d$ (resp.\ $\mathcal{K}_{\mathrm{ab}}$) may be viewed as a natural extension of $\mathcal{K}_0$. Indeed, we establish this relationship in the following corollary.

\begin{cor}[Corollaries~\ref{extension Bd}, \ref{extension B2g-2}, and \ref{metric Bab}]
For $1 \leq d \leq 2g-2$, the potential $\mathcal{K}_0$ extends to a $C^1$ function in a neighborhood of $\mathcal{B}_d$ within $\mathcal{B}$, whose restriction to $\mathcal{B}_d$ coincides with $\mathcal{K}_d$. Furthermore, $\mathcal{K}_0$ extends continuously to $\mathcal{B}_{\mathrm{ab}}$, where it coincides with $\mathcal{K}_{\mathrm{ab}}$.
\end{cor}

We also study the asymptotic behavior of $\omega_{\mathrm{SK}}(q)$ as $q$ approaches the origin $0 \in \mathcal{B}$ along radial directions. More precisely, we consider the complex line
\[
\mathcal{L}_{q_0} \coloneqq \Big\{ l \cdot q_0 \Bigm| l \in \mathbb{C} \Big\}, \quad \text{for } q_0 \in \left( \bigsqcup_{0 \leq d \leq 2g-2} \mathcal{B}_d \right) \sqcup \mathcal{B}_{\mathrm{ab}},
\]
which is naturally parametrized by $l \in \mathbb{C}$. Since $\mathcal{L}_{q_0}$ is contained within the respective stratum $\mathcal{B}_d$ or $\mathcal{B}_{\mathrm{ab}}$ due to the scaling symmetry of the Hitchin base, we can restrict the metric $\omega_{\mathrm{SK},\mathcal{B}_d}$ or $\omega_{\mathrm{SK},\mathcal{B}_{\mathrm{ab}}}$ to $\mathcal{L}_{q_0}$. This restriction is denoted by $\omega_{\mathcal{L}_{q_0}}$.

\begin{prop}[Corollaries~\ref{radial metric}, \ref{radial metric B2g-2}, and \ref{radial metric Bab}]
The form $\omega_{\mathcal{L}_{q_0}}$ induces a flat metric with a cone singularity of angle $\pi$ at the origin on $\mathcal{L}_{q_0}^* \coloneqq \mathcal{L}_{q_0} \setminus \{0\}$. More precisely, we have
\[
\omega_{\mathcal{L}_{q_0}} = \frac{\mathrm{i}}{2} C_0 |l|^{-1} \mathrm{d}l \wedge \mathrm{d}\bar{l}, \quad \text{where} \quad C_0 = \frac{\mathrm{i}}{4} \int_{\Sigma} \sqrt{\big.q_0 \bar{q}_0}.
\]
The associated K\"ahler potential for $\omega_{\mathcal{L}_{q_0}}$ is given by $2C_0 |l|$, which, in the special case $g = 2$, agrees with the computation in Hitchin \cite[Section~5.2]{hitchin2021integrable}.
\end{prop}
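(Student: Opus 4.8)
The plan is to reduce the entire statement to an exact computation of the Kähler potential along the line and then differentiate once. First I would observe that $\mathcal{L}_{q_0}^{\ast}$ lies entirely inside the stratum containing $q_0$: dilating a quadratic differential by $l\neq 0$ preserves its divisor, hence its zero type, so $l\cdot q_0\in\mathcal{B}_d$ (resp.\ $\mathcal{B}_{\mathrm{ab}}$) for all $l\neq 0$. Consequently $\omega_{\mathcal{L}_{q_0}}$ is the restriction of the stratum metric $\omega_{\mathrm{SK},\mathcal{B}_d}$ (resp.\ $\omega_{\mathrm{SK},\mathcal{B}_{\mathrm{ab}}}$) to the one-dimensional complex submanifold $\mathcal{L}_{q_0}^{\ast}$. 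Since the inclusion $\mathcal{L}_{q_0}^{\ast}\hookrightarrow\mathcal{B}_d$ is holomorphic and $\mathrm{i}\partial\bar\partial$ commutes with holomorphic pullback, restricting the Kähler form is the same as applying $\mathrm{i}\partial\bar\partial$ to the restricted potential. Thus $\omega_{\mathcal{L}_{q_0}}=\mathrm{i}\,\partial\bar\partial\bigl(\mathcal{K}|_{\mathcal{L}_{q_0}}\bigr)$, where $\mathcal{K}$ is the stratum potential $\mathcal{K}_d$ (resp.\ $\mathcal{K}_{\mathrm{ab}}$), given by the same integral \eqref{1e2} by Hitchin \cite[Proposition 4.2]{hitchin2021integrable}; here I use the normalization $\omega_{\mathrm{SK}}=\mathrm{i}\partial\bar\partial\mathcal{K}_0$ consistent with \eqref{1e2}.

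Next I would compute $\mathcal{K}$ along the line explicitly. The spectral curve $\Sigma_{lq_0}\subset\operatorname{Tot}(K_C)$ is the zero locus of $\theta^2-\pi^{\ast}(lq_0)$; the fiberwise dilation $\delta_l\colon v\mapsto\sqrt{l}\,v$ maps $\Sigma_{q_0}$ isomorphically onto $\Sigma_{lq_0}$ and satisfies $\delta_l^{\ast}\theta=\sqrt{l}\,\theta$. The factor $\sqrt{l}$ is multivalued, but $\delta_l^{\ast}(\theta\wedge\bar\theta)=|l|\,\theta\wedge\bar\theta$ is single-valued, so $\mathcal{K}(lq_0)=|l|\,\mathcal{K}(q_0)$. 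It is cleaner still to push the integrand down through the double cover $\pi$: away from the branch and nodal locus (a set of measure zero, hence harmless even when $\Sigma_{q_0}$ is nodal for $q_0$ in a positive-dimensional stratum) the two sheets carry $\theta=\pm\sqrt{q}$, so $\pi_{\ast}(\theta\wedge\bar\theta)=2\sqrt{q\bar q}$ and $\mathcal{K}(q)=\tfrac{\mathrm{i}}{2}\int_C\sqrt{q\bar q}$. Substituting $q=lq_0$ and using $\sqrt{(lq_0)\overline{(lq_0)}}=|l|\sqrt{q_0\bar q_0}$ gives $\mathcal{K}(lq_0)=|l|\cdot\tfrac{\mathrm{i}}{2}\int_C\sqrt{q_0\bar q_0}=2C_0|l|$, with $C_0=\tfrac{\mathrm{i}}{4}\int_C\sqrt{q_0\bar q_0}$; a sign/orientation check confirms $C_0>0$ because $q_0\neq 0$.

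The final step is to differentiate and interpret. Using $\partial_l\partial_{\bar l}|l|=\tfrac{1}{4}|l|^{-1}$ one obtains $\omega_{\mathcal{L}_{q_0}}=\mathrm{i}\,\partial\bar\partial\bigl(2C_0|l|\bigr)=\tfrac{\mathrm{i}}{2}C_0|l|^{-1}\,\mathrm{d}l\wedge\mathrm{d}\bar l$ on $\mathcal{L}_{q_0}^{\ast}$, positive since $C_0>0$. This metric is conformally Euclidean with factor $|l|^{-1}=e^{-\log|l|}$; as $\log|l|$ is harmonic on $\mathcal{L}_{q_0}^{\ast}$ the metric is flat there, and $2C_0|l|$ is smooth away from $l=0$, so the only singularity sits at the origin. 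To read off the cone angle I would pass to polar coordinates $l=\rho\,e^{\mathrm{i}\phi}$ and set $r=2\sqrt{\rho}$, turning the line element into $\mathrm{d}r^2+r^2\,\mathrm{d}(\phi/2)^2$; since $\phi/2$ has period $\pi$, this is a flat cone of total angle $\pi$. Equivalently, the holomorphic substitution $l=\zeta^2$ pulls the metric back to a constant multiple of the Euclidean $|\mathrm{d}\zeta|^2$, exhibiting the $l$-plane as a $\pi$-cone double cover branched at the origin. Specializing to $g=2$ then matches Hitchin \cite[Section 5.2]{hitchin2021integrable} directly. I expect the only delicate point to be justifying the pushforward identity $\int_{\Sigma_q}\theta\wedge\bar\theta=2\int_C\sqrt{q\bar q}$ when $\Sigma_q$ is nodal, together with the careful bookkeeping of the factor $2$ and the $\mathrm{i}$-conventions linking $\mathcal{K}$ to $\omega_{\mathrm{SK}}$; everything else is routine.
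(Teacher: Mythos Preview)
Your argument is correct and actually more streamlined than the paper's. The paper first works in the special coordinates $\{\mathfrak{z}^i\}$: it uses the fiberwise dilation $f\colon\widetilde{\Sigma_{q_0}}\to\widetilde{\Sigma_{lq_0}}$ to show that the period matrix entries $\tau_{ij}(l)$ are constant in $l$, while each $\mathfrak{z}^i$ scales like $\sqrt{l}$, so every summand $\Im(\tau_{ij})\,\mathrm{d}\mathfrak{z}^i\wedge\mathrm{d}\bar{\mathfrak{z}}^j$ contributes a fixed multiple of $|l|^{-1}\mathrm{d}l\wedge\mathrm{d}\bar l$; this yields $\omega_{\mathcal{L}_{q_0}}=\tfrac{\mathrm{i}}{2}C_0|l|^{-1}\mathrm{d}l\wedge\mathrm{d}\bar l$ for \emph{some} $C_0>0$, and only then is $C_0$ pinned down by comparing with the global potential $\mathcal{K}_d(q_0)|l|$. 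You instead go straight to the potential: since the stratum potential is given by the same integral \eqref{1e2} and scales as $\mathcal{K}(lq_0)=|l|\,\mathcal{K}(q_0)=2C_0|l|$, a single application of $\mathrm{i}\partial\bar\partial$ produces the metric with the correct constant in one step. Your route avoids the period-matrix bookkeeping entirely and treats all three strata $\mathcal{B}_d$, $\mathcal{B}_{2g-2}$, $\mathcal{B}_{\mathrm{ab}}$ uniformly, at the modest cost of invoking the identity $\int_{\widetilde{\Sigma_q}}\theta\wedge\bar\theta=2\int_C\sqrt{q\bar q}$ (which the paper also uses, and which is unproblematic since the nodal/branch locus has measure zero). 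The paper's approach, on the other hand, makes visible the scaling behaviour of the individual special coordinates and period entries, which connects more directly to the surrounding results on the block structure of $\omega_{\mathrm{SK}}$. Your explicit verification of the cone angle via $l=\zeta^2$ (or the polar substitution) is also a welcome addition, as the paper simply asserts the cone interpretation.
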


\subsection*{Limitations of the Paper}

In general, consider a partition $\mathbf{p}$ of $4g-4$, and let $\mathcal{B}_{\mathbf{p}} \subseteq \mathcal{B}$ denote the stratum of holomorphic quadratic differentials of type $\mathbf{p}$. Our analysis of the singularities of the special K\"ahler metric does not extend to the strata $\mathcal{B}_{\mathbf{p}}$ for the following reasons:

\begin{enumerate}
    \item If the stratum $\mathcal{B}_{\mathbf{p}}$ fails to satisfy the conditions of He--Horn--Li \cite[Lemma~2.2]{he2025asymptotics}, i.e., the length of $\mathbf{p}$ is less than or equal to $2g-2$, it may not admit a submanifold structure. 
    
    \item As demonstrated by Hitchin \cite[Proposition~3.3.4]{hitchin2019critical}, the associated algebraically completely integrable system no longer exists in this context; consequently, there is no natural special K\"ahler structure on $\mathcal{B}_{\mathbf{p}}$. 
    
    \item Most concretely, we currently lack the technical tools to carry out Steps~(1) and~(2) of Construction~\ref{construction regular} in this generality. In particular, for a given $q_0 \in \mathcal{B}_{\mathbf{p}}$, it is unclear how to construct curves on the normalized spectral curve $\widetilde{\Sigma_{q_0}}$ and deform them so that they produce a symplectic basis of $H_1(\Sigma_q, \mathbb{R})^-$ for every regular $q$ sufficiently close to $q_0$.
\end{enumerate}

\subsection*{Outline of the Paper}

We briefly outline the main approach of this work. Let $q_0 \in \mathcal{B}_d$ for some $1 \leq d \leq 2g-2$, or $q_0 \in \mathcal{B}_{\mathrm{ab}}$. The associated spectral curve $\Sigma_{q_0} \subseteq \operatorname{Tot}(K_C)$ is an algebraic curve possessing exclusively nodal singularities. Denote by $\nu \colon \widetilde{\Sigma_{q_0}} \to \Sigma_{q_0}$ its normalization, and define $\widetilde{\pi} \coloneqq \pi \circ \nu \colon \widetilde{\Sigma_{q_0}} \to C$, which is a (branched) double cover. Let $\sigma \colon \widetilde{\Sigma_{q_0}} \to \widetilde{\Sigma_{q_0}}$ be the sheet-exchange involution. We denote by $H_1\left(\widetilde{\Sigma_{q_0}}, \mathbb{R}\right)^-$ the $(-1)$-eigenspace of the induced involution $\sigma_*$. In the following, we highlight the key technical considerations required to extend Construction~\ref{construction regular}, originally formulated for $\mathcal{B}^{\mathrm{reg}}$, to a neighborhood $\mathcal{U}$ of $q_0$.

\begin{construction}[Extension of Construction~\ref{construction regular}]
\leavevmode
\label{construction}
\begin{enumerate}
    \item[Step (1)] This step corresponds to Step~(1) of Construction~\ref{construction regular}. The goal is to construct $3g-3$ pairs of oriented smooth curves $\left\{\mathfrak{a}_i\big|_{q_0}, \mathfrak{b}_i\big|_{q_0}\right\}_{i=1}^{3g-3}$ on $\widetilde{\Sigma_{q_0}}$ such that $\gamma + \sigma_*(\gamma)$ is homologically trivial for each cycle $\gamma$ in the collection. The intersection numbers between these smooth curves satisfy the following relations:
    \[
    \left\langle \mathfrak{a}_i\big|_{q_0}, \mathfrak{b}_j\big|_{q_0} \right\rangle = \delta_{ij},\qquad \left\langle \mathfrak{a}_i\big|_{q_0}, \mathfrak{a}_j\big|_{q_0} \right\rangle
= \left\langle \mathfrak{b}_i\big|_{q_0}, \mathfrak{b}_j\big|_{q_0} \right\rangle = 0.
    \]
    The natural approach is to select a symplectic basis for $H_1\left(\widetilde{\Sigma_{q_0}}, \mathbb{R}\right)^-$. However, unlike the case for $\mathcal{B}^{\mathrm{reg}}$, its dimension is strictly less than $6g-6$; consequently, we must supplement the collection with additional curves to obtain the required $3g-3$ pairs. 

    \item[Step (2)] This step corresponds to Step~(2) of Construction~\ref{construction regular}. We deform the cycles constructed in Step~(1) to $\Sigma_q$ for each $q \in \mathcal{U}$, yielding families $\left\{\mathfrak{a}_i\big|_{q}, \mathfrak{b}_i\big|_{q}\right\}_{i=1}^{3g-3}$. These families are required to form a symplectic basis of $H_1(\Sigma_q, \mathbb{R})^-$ whenever $q \in \mathcal{U} \cap \mathcal{B}^{\mathrm{reg}}$. However, unlike the case for $\mathcal{B}^{\mathrm{reg}}$, this deformation process may not be unique at the homology level. More concretely, for a fixed index $k$, two families $\left\{\mathfrak{b}_k\big|_{q}\right\}_{q\in\mathcal{U}}$ and $\left\{\mathfrak{b}'_k\big|_{q}\right\}_{q\in\mathcal{U}}$ may both represent continuous deformations of the same cycle $\mathfrak{b}_k\big|_{q_0}$, yet for certain $q \in \mathcal{U} \cap \mathcal{B}^{\mathrm{reg}}$, the difference $\mathfrak{b}_k\big|_q - \mathfrak{b}'_k\big|_q$ may be homologically non-trivial.

    \item[Step (3)] This step corresponds to Step~(3) of Construction~\ref{construction regular}. Define $3g-3$ pairs of analytic functions $\big\{\mathfrak{z}^i, \mathfrak{w}_i\big\}_{i=1}^{3g-3}$ on $\mathcal{U}$ by integrating the tautological $1$-form $\theta$ along the curves constructed in Step~(2). The non-uniqueness of the deformation process implies that these functions may be multi-valued.

    \item[Step (4)] This step corresponds to Step~(4) of Construction~\ref{construction regular}. For each $q \in \mathcal{U}$, we take a family of $1$-forms $\left\{\omega_j\big|_q\right\}_{j=1}^{3g-3}$ satisfying $\int_{\mathfrak{a}_i|_q} \omega_j\big|_q = \delta_{ij}$. Unlike the case for $\mathcal{B}^{\mathrm{reg}}$, dimension constraints prevent us from choosing these solely as holomorphic $1$-forms on $\widetilde{\Sigma_q}$. The remedy is to consider spaces of meromorphic $1$-forms on $\widetilde{\Sigma_{q}}$ with prescribed poles.This construction yields the form $\omega_{\mathrm{SK}}$ on $\mathcal{U} \cap \mathcal{B}^{\mathrm{reg}}$ given by
    \[
    \omega_{\mathrm{SK}} = \frac{\mathrm{i}}{2} \sum_{i,j=1}^{3g-3} \Im(\tau_{ij}) \, \mathrm{d} \mathfrak{z}^i \wedge \mathrm{d} \bar{\mathfrak{z}}^j, \quad \text{where} \quad \tau_{ij}(q) \coloneqq \int_{\mathfrak{b}_j|_q} \omega_i\big|_q.
    \]
    Since $\omega_i\big|_q$ may have poles, the integral defining $\tau_{ij}(q)$ can diverge. This divergence is precisely the source of the singularities of $\omega_{\mathrm{SK}}$.
\end{enumerate}
\end{construction}

This paper is organized as follows. Section~2 introduces basic notions and necessary lemmas; the main result is given in \eqref{2e6}. Section~3 treats the cases of $\mathcal{B}_d$ for $0 \leq d \leq 2g-3$. Specifically, Subsections~\ref{step1 Bd} and \ref{step2 Bd} correspond to Steps~(1) and (2) in Construction~\ref{construction}. We construct certain curves on the spectral curves for $\mathcal{B}_d$ and describe their deformations to nearby spectral curves. Subsections~\ref{steps3-4 Bd} and \ref{0 Bd} contain the primary findings. Theorem~\ref{main Bd} describes the local model of the special K\"ahler metric $\omega_{\mathrm{SK}}$ near $\mathcal{B}_d$, while Corollaries~\ref{metric Bd} and \ref{extension Bd} investigate the metric $\omega_{\mathrm{SK},\mathcal{B}_d}$ on $\mathcal{B}_d$ and its K\"ahler potential $\mathcal{K}_d$. Finally, Corollary~\ref{radial metric} characterizes the limiting behavior of $\omega_{\mathrm{SK}}$ along the complex line $\mathcal{L}_{q_0}$ as a flat cone metric with cone angle $\pi$.

Sections~4 and 5 follow a similar structure for the cases of $\mathcal{B}_{2g-2}$ and $\mathcal{B}_{\mathrm{ab}}$. The main results, Theorems~\ref{main B2g-2} and \ref{main Bab}, characterize the local singular models of $\omega_{\mathrm{SK}}$. Section~5.3 concludes with a study of the $\mathrm{SL}_2(\mathbb{C})$-Hitchin system on a genus~$2$ hyperelliptic curve, focusing on the special K\"ahler metric on its corresponding Hitching base.

\addtocontents{toc}{\protect\setcounter{tocdepth}{2}}
\section{Geometry of Spectral Curves}

In this section, we establish several preliminary lemmas and geometric constructions that serve as the foundation for subsequent chapters. Let $C$ be a compact Riemann surface of genus $g \geq 2$, and let $\pi \colon K_C \to C$ denote its canonical bundle. Denote by $\theta$ the tautological $1$-form on $\operatorname{Tot}(K_C)$. Since the pullback bundle $\pi^*K_C$ naturally embeds as a line subbundle of $T^*\operatorname{Tot}(K_C)$, we have the following short exact sequence (cf.\ \cite[Exe.~29(a), p.~103]{spivak1999comprehensive}):
\[
\begin{tikzcd}
0 \arrow[r] & \pi^*K_C \arrow[r, "\iota", hook] & T^*\operatorname{Tot}(K_C) \arrow[r] & \pi^*(TC) \arrow[r] & 0.
\end{tikzcd}
\]
The section $\theta$ of $T^*\operatorname{Tot}(K_C)$ is thus identified as a holomorphic section of $\pi^*K_C$, as it lies precisely in the image $\iota(\pi^*K_C)$.

Denote by $\mathcal{M}^{6g-6}$ the moduli space of polystable $\mathrm{SL}_2(\mathbb{C})$ Higgs bundles over $C$ with trivial determinant. Let $\mathrm{Hit} \colon \mathcal{M} \to \mathcal{B}$ be the Hitchin fibration, where $\mathcal{B} \coloneqq H^0(C, K_C^2)$ is the Hitchin base. For each $q_0 \in \mathcal{B}$, the section $s_{q_0} \coloneqq \theta^2 - \pi^* q_0 \in H^0(\operatorname{Tot}(K_C), \pi^* K_C^2)$ defines the \emph{spectral curve} $\Sigma_{q_0} \coloneqq \operatorname{div}(s_{q_0}) \subseteq \operatorname{Tot}(K_C)$, which is smooth if and only if $q_0 \in \mathcal{B}^{\mathrm{reg}}$. Let the divisor of $q_0$ be
\begin{equation}
\label{2e1}
\operatorname{div}(q_0) = \sum_{k=1}^{r_{\mathrm{odd}}} n^o_k \cdot p^o_k\big|_{q_0} + \sum_{k=1}^{r_{\mathrm{even}}} n^e_k \cdot p^e_k\big|_{q_0},
\end{equation}
where $n^o_k$ and $n^e_k$ denote odd and even positive integers, respectively. Define the divisor $D_{q_0}$ on $C$ associated with $q_0$ by
\[
D_{q_0} \coloneqq \left\lfloor \frac{1}{2} \operatorname{div}(q_0) \right\rfloor = \sum_{k=1}^{r_{\mathrm{odd}}} \frac{n^o_k - 1}{2} \cdot p^o_k + \sum_{k=1}^{r_{\mathrm{even}}} \frac{n^e_k}{2} \cdot p^e_k.
\]
Let $\nu \colon \widetilde{\Sigma_{q_0}} \to \Sigma_{q_0}$ be the normalization, and set $\widetilde{\pi} \coloneqq \pi \circ \nu \colon \widetilde{\Sigma_{q_0}} \to C$. This map $\widetilde{\pi}$ is a double cover branched over the divisor $\sum_{k=1}^{r_{\mathrm{odd}}} p^o_k$. Let $\sigma \colon \widetilde{\Sigma_{q_0}} \to \widetilde{\Sigma_{q_0}}$ denote the sheet-exchange involution of $\widetilde{\pi}$. Let $\sigma_*$ and $\sigma^*$ be the induced maps on the homology and cohomology of $\widetilde{\Sigma_{q_0}}$, respectively. The $(\pm 1)$-eigenspaces are denoted by $H^{1,0}\left(\widetilde{\Sigma_{q_0}}\right)^\pm$ and $H_1\left(\widetilde{\Sigma_{q_0}}, \mathbb{R}\right)^\pm$. The genus of $\widetilde{\Sigma_{q_0}}$ and the dimensions of these (co)homology groups are given by the following lemma.

\begin{lemma}
\label{Eigenspace}
If $q_0 \in \mathcal{B}$ is not the square of a holomorphic $1$-form, then $\widetilde{\Sigma_{q_0}}$ is a connected Riemann surface of genus $2g - 1 + r_{\mathrm{odd}}/2$, and
\begin{align*}
\dim_{\mathbb{C}} \left( H^{1,0}\left(\widetilde{\Sigma_{q_0}}\right)^+ \right) &= \frac{1}{2} \dim_{\mathbb{R}} \left( H_1\left(\widetilde{\Sigma_{q_0}}, \mathbb{R}\right)^+ \right) = g, \\
\dim_{\mathbb{C}} \left( H^{1,0}\left(\widetilde{\Sigma_{q_0}}\right)^- \right) &= \frac{1}{2} \dim_{\mathbb{R}} \left( H_1\left(\widetilde{\Sigma_{q_0}}, \mathbb{R}\right)^- \right) = g - 1 + \frac{r_{\mathrm{odd}}}{2}.
\end{align*}
\end{lemma}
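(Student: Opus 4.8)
The plan is to derive every assertion from the single fact recorded just above the statement, namely that $\widetilde\pi=\pi\circ\nu\colon\widetilde{\Sigma_{q_0}}\to C$ is a double cover branched exactly over the reduced divisor $B:=\sum_{k=1}^{r_{\mathrm{odd}}}p^o_k\big|_{q_0}$ of odd-order zeros. Abbreviate $\widetilde\Sigma:=\widetilde{\Sigma_{q_0}}$ and write $\tilde g$ for its genus. I would first note that $r_{\mathrm{odd}}$ is even: since $\deg\operatorname{div}(q_0)=\deg K_C^2=4g-4$ is even and $\sum_k n^e_k$ is even, $\sum_k n^o_k$ is even, which forces an even number of odd summands; hence the claimed value $\tilde g=2g-1+r_{\mathrm{odd}}/2$ is an integer. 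The proof then splits into three independent pieces: connectedness, the genus, and the eigenspace dimensions.

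\textbf{Connectedness.} I would argue by cases. If $r_{\mathrm{odd}}>0$, then $\widetilde\pi$ is genuinely ramified, so the local monodromy around a branch point is the transposition exchanging the two sheets; any degree-$2$ cover with nontrivial monodromy is connected, so $\widetilde\Sigma$ is connected (and here the hypothesis is automatic, as a square $\psi^2$ has only even-order zeros). If $r_{\mathrm{odd}}=0$, then $B=\varnothing$ and $\widetilde\pi$ is an \emph{unramified} double cover, determined by the square-root line bundle $M:=K_C\otimes\mathcal O_C(-D_{q_0})$: the identity $\operatorname{div}(q_0)=2D_{q_0}+B$ gives $K_C^{2}\cong\mathcal O_C(2D_{q_0}+B)$, hence $M^{2}\cong\mathcal O_C(B)=\mathcal O_C$, so $M$ is $2$-torsion and $\widetilde\Sigma$ is the étale double cover attached to $M$. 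Such a cover is disconnected precisely when $M\cong\mathcal O_C$, i.e.\ when $q_0$ admits a global square root in $H^0(C,K_C)$, which the hypothesis excludes. Making the equivalence between $M\cong\mathcal O_C$ and `$q_0=\psi^2$' airtight—producing $\psi\in H^0(C,K_C)$ from a trivialisation of $M$ together with the defining section of $\mathcal O_C(D_{q_0})$, and conversely—is the one genuinely content-bearing point, and is where I expect the main obstacle to lie; everything else is bookkeeping.

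\textbf{Genus.} With connectedness established, Riemann–Hurwitz for the degree-$2$ map $\widetilde\pi$, branched at the $r_{\mathrm{odd}}$ points of $B$ (each of ramification index $2$, contributing $1$ apiece), gives $2\tilde g-2=2(2g-2)+r_{\mathrm{odd}}$, whence $\tilde g=2g-1+r_{\mathrm{odd}}/2$.

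\textbf{Eigenspace dimensions.} These follow from standard double-cover Hodge theory. Pullback gives an injection $\widetilde\pi^*\colon H^{1,0}(C)\hookrightarrow H^{1,0}(\widetilde\Sigma)$ into the $(+1)$-eigenspace, and conversely every $\tau$-invariant holomorphic $1$-form descends to $C=\widetilde\Sigma/\tau$: in a local branch coordinate $w$ with $\tau(w)=-w$ and $z=w^2$, invariance forces the form to be $w\,g(w^2)\,dw=\tfrac12 g(z)\,dz$, holomorphic on $C$. Hence $H^{1,0}(\widetilde\Sigma)^+\cong H^{1,0}(C)$ has dimension $g$, and since $\tau^*$ is an involution, $\dim_{\mathbb C}H^{1,0}(\widetilde\Sigma)^-=\tilde g-g=g-1+r_{\mathrm{odd}}/2$. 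Finally, because $\tau$ is holomorphic, the Hodge decomposition $H^1(\widetilde\Sigma,\mathbb C)=H^{1,0}\oplus H^{0,1}$ is $\tau^*$-equivariant, and complex conjugation commutes with the real operator $\tau^*$ while swapping the two summands; therefore $\dim_{\mathbb R}H^1(\widetilde\Sigma,\mathbb R)^{\pm}=2\dim_{\mathbb C}H^{1,0}(\widetilde\Sigma)^{\pm}$. Poincaré duality identifies $H_1(\widetilde\Sigma,\mathbb R)^{\pm}$ with $H^1(\widetilde\Sigma,\mathbb R)^{\pm}$ (with $\tau_*$ corresponding to $\tau^*$ since $\tau^2=\mathrm{id}$), giving $\tfrac12\dim_{\mathbb R}H_1(\widetilde\Sigma,\mathbb R)^{\pm}=\dim_{\mathbb C}H^{1,0}(\widetilde\Sigma)^{\pm}$, which is exactly the stated pair of identities.
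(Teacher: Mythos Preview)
Your proof is correct, but both the connectedness argument and the eigenspace computation take different routes from the paper. For connectedness, the paper gives a one-line argument specific to spectral curves: if $\widetilde{\Sigma_{q_0}}$ splits into two sheets, then $\widetilde\pi$ restricts to a biholomorphism on each, and pushing the tautological form $\theta|_{\widetilde{\Sigma_{q_0}}^{(i)}}$ down to $C$ produces a global square root of $q_0$ directly. This bypasses your case split and the identification of the \'etale cover with the $2$-torsion bundle $M=K_C(-D_{q_0})$, which you yourself flag as the one delicate step. For the eigenspace dimensions, the paper works in the opposite direction from you: it first computes $\dim H_1(\widetilde\Sigma,\mathbb R)^{\pm}$ via the Lefschetz fixed-point formula (the fixed points of $\tau$ are exactly the $r_{\mathrm{odd}}$ ramification points, giving $r_{\mathrm{odd}}=2-\dim H_1^{+}+\dim H_1^{-}$, which together with $\dim H_1^{+}+\dim H_1^{-}=2\tilde g$ determines both), and only then passes to $H^{1,0}$ via the period pairing $\gamma\mapsto(\omega\mapsto\int_\gamma\omega)$. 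Your route---descent $H^{1,0}(\widetilde\Sigma)^{+}\cong H^{1,0}(C)$ first, then Hodge theory for $H_1$---is more structural and avoids Lefschetz entirely, at the cost of the local coordinate check that invariant forms descend. Both are standard; the paper's choices are simply shorter given the ambient setup.
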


\begin{proof}
Prove connectedness by contradiction. Suppose that $\widetilde{\Sigma_{q_0}}$ has two components $\widetilde{\Sigma_{q_0}}^{(i)}$ for $i=1,2$. Then $\widetilde{\pi}$ restricts to a biholomorphic map $\widetilde{\Sigma_{q_0}}^{(i)} \to C$. Restricting $\theta$ to $\widetilde{\Sigma_{q_0}}^{(i)}$ and taking its pushforward by $\widetilde{\pi}$ defines a holomorphic square root of $q_0$ on $C$, which is a contradiction.

Applying the Lefschetz fixed-point theorem and noting that the genus of $\widetilde{\Sigma_{q_0}}$ is $\widetilde{g} \coloneqq 2g - 1 + r_{\mathrm{odd}}/2$ by the Riemann--Hurwitz formula, we have
\begin{align*}
&r_{\mathrm{odd}} = 2 - \dim_{\mathbb{R}} H_1\left(\widetilde{\Sigma_{q_0}}, \mathbb{R}\right)^+ + \dim_{\mathbb{R}} H_1\left(\widetilde{\Sigma_{q_0}}, \mathbb{R}\right)^-, \\
&\dim_{\mathbb{R}} H_1\left(\widetilde{\Sigma_{q_0}}, \mathbb{R}\right)^+ + \dim_{\mathbb{R}} H_1\left(\widetilde{\Sigma_{q_0}}, \mathbb{R}\right)^- = 2\widetilde{g} = 4g - 2 + r_{\mathrm{odd}}.
\end{align*}
Solving these equations determines the dimensions of the homology spaces. For the cohomology spaces, recall the canonical isomorphism
\[
H_1\left(\widetilde{\Sigma_{q_0}}, \mathbb{R}\right) \cong_{\mathbb{R}} \operatorname{Hom}_{\mathbb{C}} \left(H^{1,0}\left(\widetilde{\Sigma_{q_0}}\right), \mathbb{C}\right), \quad \gamma \mapsto \left(\omega \mapsto \int_\gamma \omega\right).
\]
Consider its restriction $H_1\left(\widetilde{\Sigma_{q_0}}, \mathbb{R}\right)^\pm \to \operatorname{Hom}_{\mathbb{C}} \left(H^{1,0}\left(\widetilde{\Sigma_{q_0}}\right)^\pm, \mathbb{C}\right)$. To see that these maps are well-defined, we check that
\[
\int_\gamma \omega = \pm \int_{\sigma_*(\gamma)} \omega = \pm \int_\gamma \sigma^*(\omega) = -\int_\gamma \omega, \quad \text{for any } \gamma \in H_1\left(\widetilde{\Sigma_{q_0}}, \mathbb{R}\right)^\pm \text{ and } \omega \in H^{1,0}\left(\widetilde{\Sigma_{q_0}}\right)^\mp,
\]
which forces $\int_\gamma \omega = 0$. Since these restricted maps are induced by an isomorphism, they are injective; a dimension comparison then implies that they are in fact isomorphisms.
\end{proof}

Let $q_0 \in \mathcal{B}$ and define the composite map $\iota \colon \widetilde{\Sigma_{q_0}} \to \Sigma_{q_0} \hookrightarrow \operatorname{Tot}(K_C)$. Denote by $\theta\big|_{\widetilde{\Sigma_{q_0}}}$ the holomorphic $1$-form $\iota^*\theta$ on $\widetilde{\Sigma_{q_0}}$. In the following, by abuse of notation, we identify $\widetilde{\Sigma_{q_0}}$ with its image in $\operatorname{Tot}(K_C)$ via the inclusion $\iota$.

\begin{lemma}
\label{holomorphic theta}
Adopt the notation for $\operatorname{div}(q_0)$ from \eqref{2e1}. Let $\widetilde{p^o_k}\big|_{q_0}$ be the unique preimage in $\widetilde{\pi}^{-1} \left(p^o_k\big|_{q_0}\right)$, and let $\left\{\widetilde{p^e_{k,1}}\big|_{q_0}, \widetilde{p^e_{k,2}}\big|_{q_0}\right\}$ be the two points in $\widetilde{\pi}^{-1} \left(p^e_k\big|_{q_0}\right)$. Then $\theta\big|_{\widetilde{\Sigma_{q_0}}}$ is a holomorphic $1$-form on $\widetilde{\Sigma_{q_0}}$ whose divisor is characterized as follows:
\begin{enumerate}
    \item Each point $\widetilde{p^o_k}\big|_{q_0}$ is a zero of order $n^o_k + 1$.
    \item Both $\widetilde{p^e_{k,1}}\big|_{q_0}$ and $\widetilde{p^e_{k,2}}\big|_{q_0}$ are zeros of order $n^e_k/2$.
\end{enumerate}
\end{lemma}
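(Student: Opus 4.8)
The plan is to reduce the statement to a purely local computation of the order of vanishing of $\theta\big|_{\widetilde{\Sigma_{q_0}}}$ at each special point, since holomorphy is automatic: the tautological form $\theta$ is a global holomorphic section of $T^*\operatorname{Tot}(K_C)$ (indeed of $\pi^*K_C$), and $\iota\colon \widetilde{\Sigma_{q_0}}\to\operatorname{Tot}(K_C)$ is a holomorphic map out of a smooth curve, so $\iota^*\theta$ can have no poles. To compute orders, I would fix a zero $p$ of $q_0$ and a holomorphic coordinate $z$ on $C$ centered at $p$, and let $w$ be the fiber coordinate on $\operatorname{Tot}(K_C)$ with respect to the frame $dz$ of $K_C$, so that $\theta = w\,dz$ and, writing $q_0 = f(z)\,(dz)^2$ with $f$ holomorphic, the spectral curve $\Sigma_{q_0}$ is cut out near $p$ by $w^2 = -f(z)$. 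Everything then comes down to parametrizing the normalization of this affine plane curve and substituting into $w\,dz$.

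At an odd zero $p = p^o_k$ of order $n \coloneqq n^o_k$, I would write $f(z) = z^n u(z)$ with $u(0)\neq 0$; choosing a holomorphic square root $k(z)$ of $-u$ near $0$, the equation becomes $\bigl(w/k(z)\bigr)^2 = z^n$ with $n$ odd, so the cover is genuinely ramified and the normalization carries a local parameter $t$ with $z = t^2$ and $w = k(t^2)\,t^n$ (up to the choice of sheet). Substituting gives $\theta\big|_{\widetilde{\Sigma_{q_0}}} = w\,dz = \pm\,2\,k(t^2)\,t^{n+1}\,dt$, which vanishes to order exactly $n+1 = n^o_k+1$ at the unique preimage $\widetilde{p^o_k}\big|_{q_0}$, as claimed.

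At an even zero $p = p^e_k$ of order $2m \coloneqq n^e_k$, I would instead factor the defining equation: choosing $\alpha(z)$ with $\alpha(z)^2 = -u(z)$ and $\alpha(0)\neq 0$, one has $w^2 + z^{2m}u(z) = \bigl(w - z^m\alpha(z)\bigr)\bigl(w + z^m\alpha(z)\bigr)$, so $\Sigma_{q_0}$ is locally the union of the two smooth branches $w = \pm z^m\alpha(z)$, each a graph over $z$. Hence $z$ is a local parameter on each of the two preimages $\widetilde{p^e_{k,1}}\big|_{q_0}, \widetilde{p^e_{k,2}}\big|_{q_0}$, and on them $\theta\big|_{\widetilde{\Sigma_{q_0}}} = \pm z^m\alpha(z)\,dz$ vanishes to order $m = n^e_k/2$. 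Finally, away from the zeros of $q_0$ one has $w\neq 0$ on $\Sigma_{q_0}$, so $\theta = w\,dz$ is nonvanishing there; as a consistency check (in the connected case of Lemma~\ref{Eigenspace}), the divisor just described has degree $\sum_k(n^o_k+1) + \sum_k 2\cdot(n^e_k/2) = (4g-4) + r_{\mathrm{odd}}$, which equals $2\widetilde{g}-2$ for $\widetilde{g} = 2g-1+r_{\mathrm{odd}}/2$, confirming that no further zeros occur. The only delicate point is the bookkeeping in the ramified (odd) case — correctly identifying $t$ with $z = t^2$ as the uniformizer on the normalization and tracking the extra factor of $t$ from $dz = 2t\,dt$ — which is precisely what produces the shift from $n$ to $n+1$.
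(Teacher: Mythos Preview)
Your proof is correct and follows essentially the same local computation as the paper: both pick a coordinate $z$ at the zero of $q_0$, write $\theta = w\,dz$ (the paper uses $\widetilde{x}\,d\widetilde{z}$), and explicitly parametrize the normalization branches by $z=t^2$, $w\sim t^n$ in the odd case and $w=\pm z^m\cdot(\text{unit})$ in the even case. The only differences are cosmetic --- the paper absorbs your unit $u(z)$ into the coordinate by assuming $q_0=z^n\,dz^2$, and you add a pleasant degree check against $2\widetilde{g}-2$ that the paper omits.
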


\begin{proof}
Let $(U, z)$ be a local chart centered at $p \in C$, and let $\left(\widetilde{U} \coloneqq \widetilde{\pi}^{-1}(U), \widetilde{x}, \widetilde{z} \coloneqq z \circ \widetilde{\pi}\right)$ be the induced chart on $\operatorname{Tot}(K_C)$, so that locally $\theta = \widetilde{x} \,\mathrm{d}\widetilde{z}$. Let $(\mathbb{D}, t)$ be a disc in the complex plane.

\begin{enumerate}
\item If $p$ is a zero of $q_0$ of order $2n$ ($n \geq 0$) with $q_0 = z^{2n} \mathrm{d}z^2$, then $\widetilde{\Sigma_{q_0}} \cap \widetilde{U}$ has two branches. Parametrize these branches via
\begin{equation}
\label{2e2}
\mathbb{D} \to \widetilde{\Sigma_{q_0}} \cap \widetilde{U}, \quad t \mapsto (\widetilde{x} = \pm t^n, \widetilde{z} = t),
\end{equation}
so that $\theta\big|_{\mathbb{D}} = \pm t^n \,\mathrm{d}t$, which has a zero of order $n$ at $t = 0$.

\item If $p$ is a zero of $q_0$ of order $2n+1$ with $q_0 = z^{2n+1} \mathrm{d}z^2$, then $\widetilde{\Sigma_{q_0}} \cap \widetilde{U}$ has one branch. Parametrize it via
\begin{equation}
\label{2e3}
\mathbb{D} \to \widetilde{\Sigma_{q_0}} \cap \widetilde{U}, \quad t \mapsto (\widetilde{x} = t^{2n+1}, \widetilde{z} = t^2),
\end{equation}
so that $\theta\big|_{\mathbb{D}} = t^{2n+1} \,\mathrm{d}t^2 = 2t^{2n+2} \,\mathrm{d}t$, which has a zero of order $2n+2$ at $t = 0$.
\end{enumerate}
\end{proof}

An important consequence of Lemma~\ref{holomorphic theta} is that the local parametrizations $(\mathbb{D}, t)$ constructed in \eqref{2e2} and \eqref{2e3} provide explicit coordinate charts in a neighborhood of any point on the normalized spectral curve $\widetilde{\Sigma_{q_0}}$.

\begin{cor}
\label{contractible preimage}
Adopting the notation established above, let $\ell$ be a sufficiently small contractible loop around a point $p \in C$. If $p$ is a zero of $q_0$ of order $2n$ (with $n \geq 0$), then the preimage $\widetilde{\pi}^{-1}\left(\ell\right)$ consists of two disjoint contractible loops on $\widetilde{\Sigma_{q_0}}$, which enclose $\widetilde{p^e_{k,1}}\big|_{q_0}$ and $\widetilde{p^e_{k,2}}\big|_{q_0}$, respectively.
\end{cor}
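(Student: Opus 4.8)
The plan is to reduce the statement to the explicit local normal form recorded in the proof of Lemma~\ref{holomorphic theta}. The starting observation is that the double cover $\widetilde{\pi}\colon\widetilde{\Sigma_{q_0}}\to C$ is branched only over the odd-order zeros $\sum_k p^o_k\big|_{q_0}$; hence a point $p$ that is a zero of even order $2n$ (or not a zero at all) is not a branch point, so $\widetilde{\pi}$ is unramified over $p$ and $\widetilde{\pi}^{-1}(p)$ consists of the two distinct points $\widetilde{p^e_{k,1}}\big|_{q_0}$ and $\widetilde{p^e_{k,2}}\big|_{q_0}$.

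First I would fix a local chart $(U;z)$ centered at $p$, chosen small enough that $U$ contains no zero of $q_0$ other than $p$ itself, and take $\ell\subseteq U$ to be a small simple loop bounding a disc that contains $p$. By case~(1) in the proof of Lemma~\ref{holomorphic theta}, writing $q_0=z^{2n}\,\mathrm{d}z^2$ locally, the curve $\widetilde{\Sigma_{q_0}}$ has two branches over $U$, each given by the parametrization \eqref{2e2}, namely $t\mapsto(\widetilde{x}=\pm t^n,\ \widetilde{z}=t)$ with $t\in\mathbb{D}$. Passing to the normalization separates these two branches into two disjoint coordinate discs $\widetilde{U}_1,\widetilde{U}_2$ centered respectively at $\widetilde{p^e_{k,1}}\big|_{q_0}$ and $\widetilde{p^e_{k,2}}\big|_{q_0}$ (the points $t=0$ of the two branches).

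The decisive point is that in these parametrizations $\widetilde{\pi}$ acts as $t\mapsto z=t$, so its restriction to each $\widetilde{U}_i$ is a biholomorphism onto $U$. Consequently $\widetilde{\pi}^{-1}(\ell)$ is the disjoint union of the two preimages $\ell_i:=\widetilde{\pi}^{-1}(\ell)\cap\widetilde{U}_i$ for $i=1,2$; each $\ell_i$ is a simple closed curve bounding the disc cut out of $\widetilde{U}_i$ by $\ell$, hence is contractible in $\widetilde{\Sigma_{q_0}}$ and encloses the center $\widetilde{p^e_{k,i}}\big|_{q_0}$. The two loops are disjoint because $\widetilde{U}_1$ and $\widetilde{U}_2$ are disjoint on the normalization.

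There is no serious obstacle here: the statement is essentially a translation of the local normal form into the language of loops. The only points that warrant care are verifying that the normalization genuinely separates the two branches of $\Sigma_{q_0}$ over $p$ into disjoint discs — the two analytic branches meet at $\widetilde{x}=0$ over $p$, a singular point of $\Sigma_{q_0}$, and it is precisely $\nu$ that pulls them apart — together with the routine requirement that $U$, and hence $\ell$, be chosen small enough that no other zero or ramification of $\widetilde{\pi}$ interferes with the lift.
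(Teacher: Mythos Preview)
Your proof is correct and follows exactly the approach the paper intends: the corollary is stated without proof immediately after Lemma~\ref{holomorphic theta}, as a direct consequence of the local parametrizations \eqref{2e2} established there, and your argument simply spells out this deduction. One minor remark: your closing caveat that ``the two analytic branches meet at $\widetilde{x}=0$ over $p$'' applies only when $n\geq 1$; for $n=0$ the spectral curve is already smooth over $p$ and no separation by $\nu$ is needed, but this does not affect the argument.
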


\begin{prop}
Adopting the notation established in Lemma~\ref{holomorphic theta}, there exists an isomorphism 
\begin{equation}
\label{2e4}
\widetilde{\pi}^*\left(K_C^2\left(-D_{q_0}\right)\right) \cong K_{\widetilde{\Sigma_{q_0}}}.
\end{equation}
Furthermore, let $(U, z)$ and $\left( \widetilde{U} \coloneqq \widetilde{\pi}^{-1}(U), \widetilde{x}, \widetilde{z} \coloneqq z \circ \widetilde{\pi} \right)$ be local charts on $C$ and $\operatorname{Tot}(K_C)$, respectively. Then this isomorphism is given locally by 
\begin{equation}
\label{2e5}
\mathrm{d}z^2 \;\mapsto\; \frac{1}{2\widetilde{x}} \,\mathrm{d}\widetilde{z}\,\Big|_{\widetilde{\Sigma_{q_0}}} \quad \text{on } \widetilde{\Sigma_{q_0}} \cap \widetilde{U},
\end{equation}
where $\mathrm{d}\widetilde{z}\,\big|_{\widetilde{\Sigma_{q_0}}} \coloneqq \iota^* \mathrm{d}\widetilde{z}$ is defined via the inclusion $\iota \colon \widetilde{\Sigma_{q_0}} \to \Sigma_{q_0} \hookrightarrow \operatorname{Tot}(K_C)$.
\end{prop}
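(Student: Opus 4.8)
\section*{Proof proposal}

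The plan is to build the claimed isomorphism $\phi\colon \widetilde{\pi}^*\bigl(K_C^2(-D_{q_0})\bigr)\to K_{\widetilde{\Sigma_{q_0}}}$ directly from the stated local formula, and to verify chart by chart that the twist by $-D_{q_0}$ exactly cancels the pole introduced by the factor $1/(2\widetilde{x})$. The conceptual backbone is adjunction. The total space $\operatorname{Tot}(K_C)=T^*C$ carries the canonical holomorphic symplectic form $\mathrm{d}\theta=\mathrm{d}\widetilde{x}\wedge\mathrm{d}\widetilde{z}$, which trivializes $K_{\operatorname{Tot}(K_C)}$ globally, and $\Sigma_{q_0}=\operatorname{div}(s_{q_0})$ is a reduced divisor (as one checks on the at-worst planar singularities described by \eqref{2e2}--\eqref{2e3}) with $s_{q_0}\in H^0(\operatorname{Tot}(K_C),\pi^*K_C^2)$, so $\mathcal{O}(\Sigma_{q_0})\cong\pi^*K_C^2$. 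On the smooth locus of $\Sigma_{q_0}$, adjunction gives $K_{\Sigma_{q_0}}\cong\bigl(K_{\operatorname{Tot}(K_C)}\otimes\mathcal{O}(\Sigma_{q_0})\bigr)\big|_{\Sigma_{q_0}}\cong\pi^*K_C^2\big|_{\Sigma_{q_0}}$, and the isomorphism is computed by the Poincar\'e residue
\[
\operatorname{Res}_{\Sigma_{q_0}}\frac{\mathrm{d}\widetilde{x}\wedge\mathrm{d}\widetilde{z}}{\widetilde{x}^2+q_0(\widetilde{z})}=\frac{1}{2\widetilde{x}}\,\mathrm{d}\widetilde{z},
\]
which is precisely the stated expression applied to the frame $\mathrm{d}z^2$ of $\pi^*K_C^2\cong\mathcal{O}(\Sigma_{q_0})$. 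Since both $\mathrm{d}\theta$ and $s_{q_0}$ are globally defined, this residue description is coordinate-free; this settles global well-definedness of $\phi$ and shows it is an isomorphism over $\widetilde{\Sigma_{q_0}}\setminus\nu^{-1}(\operatorname{Sing}\Sigma_{q_0})$, where $D_{q_0}$ vanishes.

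It then remains to extend $\phi$ across $\nu^{-1}(\operatorname{Sing}\Sigma_{q_0})$, and this is where the twist enters. I would first note that $\operatorname{supp}D_{q_0}$ is exactly the image of the singular points of $\Sigma_{q_0}$: the coefficient of $D_{q_0}$ is nonzero precisely at zeros of $q_0$ of order $\geq 2$, and by \eqref{2e2}--\eqref{2e3} these are exactly the points over which $\Sigma_{q_0}$ fails to be smooth. Near such a point I would read a local frame of $\widetilde{\pi}^*\bigl(K_C^2(-D_{q_0})\bigr)$ off the local frame $z^{m}\,\mathrm{d}z^2$ of $K_C^2(-D_{q_0})$, where $m$ is the coefficient of $D_{q_0}$, and compute its image under $\phi$ using the coordinate $t$ of Lemma~\ref{holomorphic theta}.

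Concretely, at an even zero $p^e_k$ of order $2n$ the cover is unramified with $\widetilde{z}=t$, $\widetilde{x}=\pm t^{n}$ and $m=n$, so writing $e\coloneqq\widetilde{\pi}^*\mathrm{d}z^2$ one has $\widetilde{\pi}^*(z^{n}\mathrm{d}z^2)=t^{n}e$ and
\[
\phi\bigl(t^{n}e\bigr)=t^{n}\cdot\frac{1}{2(\pm t^{n})}\,\mathrm{d}t=\pm\tfrac12\,\mathrm{d}t,
\]
a nonvanishing frame of $K_{\widetilde{\Sigma_{q_0}}}$; at an odd zero $p^o_k$ of order $2n+1$ the cover is ramified with $\widetilde{z}=t^2$, $\widetilde{x}=t^{2n+1}$ and $m=n$, so $\widetilde{\pi}^*(z^{n}\mathrm{d}z^2)=t^{2n}e$ and
\[
\phi\bigl(t^{2n}e\bigr)=t^{2n}\cdot\frac{1}{2t^{2n+1}}\,\mathrm{d}(t^2)=\mathrm{d}t,
\]
again nonvanishing. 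In both cases $\phi$ carries a local frame to a local frame, so it extends holomorphically to an isomorphism over all of $\widetilde{\Sigma_{q_0}}$.

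The main obstacle is this last bookkeeping: one must track how the local frame $z^{m}$ of $\mathcal{O}(-D_{q_0})$ pulls back---undergoing the order doubling $z\mapsto t^2$ precisely at the ramified (odd) points---and confirm it matches the pole order $2n$ (resp.\ $n$) of $\tfrac{1}{2\widetilde{x}}\mathrm{d}\widetilde{z}$ there, which is exactly the content of the definition $D_{q_0}=\lfloor\tfrac12\operatorname{div}(q_0)\rfloor$. As a consistency check one can compare degrees: $\deg K_{\widetilde{\Sigma_{q_0}}}=2\widetilde{g}-2=4g-4+r_{\mathrm{odd}}$ agrees with $\deg\widetilde{\pi}^*\bigl(K_C^2(-D_{q_0})\bigr)=2\bigl(4g-4-\deg D_{q_0}\bigr)$, using $\deg D_{q_0}=2g-2-r_{\mathrm{odd}}/2$.
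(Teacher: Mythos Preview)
Your proof is correct and carries out the same computation as the paper, but in a different packaging. The paper builds the map on the punctured curve $\Sigma_{q_0}' \coloneqq \Sigma_{q_0}\setminus\pi^{-1}(\operatorname{Zero}(q_0))$ by composing the Lagrangian isomorphism $\omega_{\mathrm{can}}^\flat\colon K_{\Sigma_{q_0}'}\to N_{\Sigma_{q_0}'}$ with the differential $\mathrm{d}s_{q_0}\colon N_{\Sigma_{q_0}'}\to \pi^*K_C^2\big|_{\Sigma_{q_0}'}$, obtaining $\mathrm{d}\widetilde{z}\mapsto 2\widetilde{x}\,\mathrm{d}z^2$; it then views this composite as a holomorphic section of $K_{\widetilde{\Sigma_{q_0}}}^*\otimes\widetilde{\pi}^*K_C^2$, computes its divisor in the charts \eqref{2e2}--\eqref{2e3}, and identifies it with $\widetilde{\pi}^*D_{q_0}$. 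Your Poincar\'e--residue formulation is precisely the standard algebro-geometric name for this composite (adjunction together with the trivialization of $K_{\operatorname{Tot}(K_C)}$ by $\mathrm{d}\theta$), and your frame-to-frame check at the zeros of $q_0$ is the same divisor computation written multiplicatively rather than additively. Either phrasing is fine: yours invokes a named construction and makes the role of the twist $-D_{q_0}$ transparent from the start, while the paper's keeps the Lagrangian structure of the spectral curve in the foreground.
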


\begin{proof}
Recall that $\Sigma_{q_0} \coloneqq \operatorname{div}\left(s\right)$, where $s \coloneqq \theta^2 - \pi^*q_0 \in H^0\left(\operatorname{Tot}(K_C), \pi^*K_C^2\right)$. Let $\Sigma_{q_0}' \coloneqq \Sigma_{q_0} \setminus \pi^{-1}\left(\operatorname{Zero}\left(q_0\right)\right)$ denote the punctured spectral curve, which is a Lagrangian submanifold of $\operatorname{Tot}(K_C)$ with canonical symplectic form $\mathrm{d}\theta$. For any connection $\nabla$ on the bundle $\pi^*K_C^2 \to \operatorname{Tot}(K_C)$, the restriction map 
\[
\mathrm{d}s \coloneqq \nabla s\,\Big|_{\Sigma_{q_0}'} \colon T\operatorname{Tot}(K_C)\,\Big|_{\Sigma_{q_0}'} \to \pi^*K_C^2\Big|_{\Sigma_{q_0}'}
\]
is independent of the choice of $\nabla$ and descends to an isomorphism $\mathrm{d}s \colon N_{\Sigma_{q_0}'} \to \pi^*K_C^2\Big|_{\Sigma_{q_0}'}$, since it vanishes on the tangent space $T\Sigma_{q_0}'$. In local coordinates where $q_0 = f(z) \mathrm{d}z^2$, the sections $s$ and $\mathrm{d}s$ possess the local expressions
\[
s = \left(\widetilde{x}^2 - f(\widetilde{z})\right) \otimes \mathrm{d}z^2, \qquad \mathrm{d}s = \left(2\widetilde{x}\,\mathrm{d}\widetilde{x} - \frac{\partial f(\widetilde{z})}{\partial \widetilde{z}}\,\mathrm{d}\widetilde{z}\right) \otimes \mathrm{d}z^2.
\]
By the general theory of Lagrangian submanifolds, there is a canonical isomorphism
\[
\omega_{\mathrm{can}}^{\flat} \colon K_{\Sigma_{q_0}'} \cong N_{\Sigma_{q_0}'}, \quad \mathrm{d}\widetilde{z}\,\Big|_{\Sigma_{q_0}'} \mapsto \left[\partial_{\widetilde{x}}\right] \quad \text{locally.}
\]
The composite morphism 
\[
\varphi \colon K_{\Sigma_{q_0}'} \xrightarrow{\omega_{\mathrm{can}}^{\flat}} N_{\Sigma_{q_0}'} \xrightarrow{\mathrm{d}s} \pi^*K_C^2\Big|_{\Sigma_{q_0}'}, \quad \mathrm{d}\widetilde{z}\,\Big|_{\Sigma_{q_0}'} \mapsto \left[\partial_{\widetilde{x}}\right] \mapsto 2\widetilde{x}\,\mathrm{d}z^2,
\]
yields the formula \eqref{2e5}. Identifying $\Sigma_{q_0}'$ with $\widetilde{\Sigma_{q_0}'} \coloneqq \widetilde{\pi}^{-1}\left(C \setminus \operatorname{Zero}\left(q_0\right)\right)$, we view $\varphi$ as a nowhere-vanishing section of $K_{\widetilde{\Sigma_{q_0}'}}^* \otimes \widetilde{\pi}^*K_C^2$. Its behavior near $\widetilde{\pi}^{-1}\left(\operatorname{Zero}\left(q_0\right)\right)$ is then determined as
\begin{enumerate}
    \item For each $p^e_k\big|_{q_0}$, taking the parametrization $\mathbb{D} \to \widetilde{\Sigma_{q_0}} \cap \widetilde{U}$ given by $t \mapsto (\widetilde{x} = \pm t^{n^e_k/2}, \widetilde{z} = t)$ as in \eqref{2e2}, the morphism $\varphi$ is locally expressed as $\mathrm{d}t \mapsto \pm 2t^{n^e_k/2}\,\mathrm{d}z^2$.

    \item For each $p^o_k\big|_{q_0}$, taking the parametrization $\mathbb{D} \to \widetilde{\Sigma_{q_0}} \cap \widetilde{U}$ given by $t \mapsto (\widetilde{x} = t^{n^o_k}, \widetilde{z} = t^2)$ as in \eqref{2e3}, the morphism $\varphi$ is locally expressed as $2t\,\mathrm{d}t \mapsto 2t^{n^o_k}\,\mathrm{d}z^2$.
\end{enumerate}
Consequently, $\varphi$ extends to a holomorphic section of $K_{\widetilde{\Sigma_{q_0}}}^* \otimes \widetilde{\pi}^*K_C^2$, still denoted by $\varphi$, with divisor
\[
\operatorname{div}(\varphi) = \sum_{k=1}^{r_{\mathrm{odd}}} (n^o_k - 1)\,\widetilde{p_k^o}\big|_{q_0} + \sum_{k=1}^{r_{\mathrm{even}}} \frac{n^e_k}{2}\,\left( \widetilde{p^e_{k,1}}\big|_{q_0} + \widetilde{p^e_{k,2}}\big|_{q_0} \right) = \widetilde{\pi}^*(D_{q_0}).
\]
This induces the bundle isomorphism \eqref{2e4}
\end{proof}

\begin{remark}
In the case where $q_0 \in \mathcal{B}^{\mathrm{reg}}$ and consequently $D_{q_0} = 0$, the formulas \eqref{2e4} and \eqref{2e5} recover the classical construction provided by Hitchin \cite[Section~3.2]{hitchin2021integrable}. The results presented here extend this correspondence to the discriminant locus $\mathcal{D}$.
\end{remark}

\begin{cor}
\label{H^0 isomorphism}
Denote $\widetilde{D_{q_0}} \coloneqq \widetilde{\pi}^*\left(D_{q_0}\right)$. The following maps define linear isomorphisms:
\begin{align*}
&H^0\left(C, K_C\left(D_{q_0}\right)\right) \to H^0\left(\widetilde{\Sigma_{q_0}}, K_{\widetilde{\Sigma_{q_0}}}\left(\widetilde{D_{q_0}}\right)\right)^+, \quad \omega \mapsto \widetilde{\pi}^*\omega,\\
&H^0\left(C, K_C^2\right) \to H^0\left(\widetilde{\Sigma_{q_0}}, K_{\widetilde{\Sigma_{q_0}}}\left(\widetilde{D_{q_0}}\right)\right)^-, \quad q \mapsto \widetilde{\pi}^*q,
\end{align*}
where $\widetilde{\pi}^*q \in H^0\left(\widetilde{\Sigma_{q_0}}, \widetilde{\pi}^*K_C^2\right)$ is viewed as a section of $K_{\widetilde{\Sigma_{q_0}}}\left(\widetilde{D_{q_0}}\right)$ via the isomorphism in \eqref{2e4}, and $\widetilde{\pi}^*\omega$ denotes the pullback $1$-form. Furthermore, let $(U, z)$ and $\left( \widetilde{U} \coloneqq \widetilde{\pi}^{-1}(U), \widetilde{x}, \widetilde{z} \coloneqq z \circ \widetilde{\pi} \right)$ be local charts on $C$ and $\operatorname{Tot}(K_C)$, respectively. The second isomorphism possesses the local representation
\begin{equation}
\label{2e6}
H^0\left(C, K_C^2\right) \to H^0\left(\widetilde{\Sigma_{q_0}}, K_{\widetilde{\Sigma_{q_0}}}\left(\widetilde{D_{q_0}}\right)\right)^-, \quad f(z)\,\mathrm{d}z^2 \mapsto \frac{f(\widetilde{z})}{2\widetilde{x}} \,\mathrm{d}\widetilde{z}\,\Big|_{\widetilde{\Sigma_{q_0}}}.
\end{equation}
\end{cor}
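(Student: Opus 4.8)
The plan is to transport both maps to the cleaner line bundle $\widetilde{\pi}^{*}K_C^2$ via the isomorphism \eqref{2e4}, which reads $K_{\widetilde{\Sigma_{q_0}}}(\widetilde{D_{q_0}}) \cong \widetilde{\pi}^{*}K_C^2$, and then to check four things in order: well-definedness (together with the local formula \eqref{2e6}), the $\tau$-eigenvalue of each image, injectivity, and finally surjectivity by a dimension count. For the second map the first two of these are almost free: $\widetilde{\pi}^{*}q$ is an honest holomorphic section of $\widetilde{\pi}^{*}K_C^2$, so through \eqref{2e4} it is automatically a meromorphic $1$-form with poles bounded by $\widetilde{D_{q_0}}$, and feeding $q = f(z)\,\mathrm{d}z^2$ through the local rule $\mathrm{d}z^2 \mapsto \tfrac{1}{2\widetilde{x}}\,\mathrm{d}\widetilde{z}$ from \eqref{2e5} reproduces \eqref{2e6} verbatim. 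For the first map I would instead pull back the $1$-form directly and verify the pole bounds chart by chart using the parametrizations \eqref{2e2}--\eqref{2e3}: over an unramified point $p^e_k$ one has $\widetilde{z}=t$, so a pole of order $m\le n^e_k/2$ of $\omega$ is preserved and matches the coefficient $n^e_k/2$ of $\widetilde{D_{q_0}}$; over a ramification point $p^o_k$ one has $\widetilde{z}=t^2$, so $\mathrm{d}z$ contributes a factor $2t$ and a pole of order $m\le (n^o_k-1)/2$ turns into one of order $2m-1\le n^o_k-2$, which stays within the coefficient $n^o_k-1$ of $\widetilde{D_{q_0}}=\widetilde{\pi}^{*}D_{q_0}$ (the pullback multiplies the coefficient by the ramification index $2$).

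The eigenvalue bookkeeping is the one genuinely delicate step, and it is where a sign hides. Since $\widetilde{\pi}\circ\tau=\widetilde{\pi}$, the pullback $1$-form satisfies $\tau^{*}\widetilde{\pi}^{*}\omega=\widetilde{\pi}^{*}\omega$, so the first map lands in the $+$ eigenspace. The second map is $\tau$-invariant for the identical reason when viewed as a pullback section — but the identification \eqref{2e4} used to regard $\widetilde{\pi}^{*}q$ as a twisted $1$-form is $\tau$-anti-equivariant: the involution exchanges the two sheets $\widetilde{x}=\pm\sqrt{-q_0(\widetilde{z})}$, i.e.\ $\widetilde{x}\mapsto-\widetilde{x}$, so the local generator $\tfrac{1}{2\widetilde{x}}\,\mathrm{d}\widetilde{z}$ acquires a minus sign under $\tau$. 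Hence $\widetilde{\pi}^{*}q$, though invariant as a pullback, lies in the $-$ eigenspace once viewed inside $K_{\widetilde{\Sigma_{q_0}}}(\widetilde{D_{q_0}})$; this is precisely why the two maps target opposite eigenspaces. Injectivity of both maps is immediate because $\widetilde{\pi}$ is surjective and the pullback of a nonzero section is nonzero.

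To upgrade these injections into surjections onto the respective eigenspaces, I would run a dimension count. The total dimension is computed by Riemann--Roch on $\widetilde{\Sigma_{q_0}}$: with $\deg\widetilde{D_{q_0}}=2\deg D_{q_0}=4g-4-r_{\mathrm{odd}}$, $\deg K_{\widetilde{\Sigma_{q_0}}}(\widetilde{D_{q_0}})=8g-8$, and the vanishing $h^1=h^0\!\left(\mathcal{O}(-\widetilde{D_{q_0}})\right)=0$ (for $\widetilde{D_{q_0}}>0$), one gets $\dim H^0(\widetilde{\Sigma_{q_0}},K_{\widetilde{\Sigma_{q_0}}}(\widetilde{D_{q_0}}))=6g-6-r_{\mathrm{odd}}/2$. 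The source dimensions are $\dim H^0(C,K_C^2)=3g-3$ and $\dim H^0(C,K_C(D_{q_0}))=3g-3-r_{\mathrm{odd}}/2$ by Riemann--Roch on $C$, and these sum to exactly $6g-6-r_{\mathrm{odd}}/2$. Since the two injective images lie in complementary eigenspaces whose dimensions already exhaust the total, each map is forced to be an isomorphism onto its eigenspace. A cleaner conceptual alternative that simultaneously pins down both eigenspaces is the projection formula $H^0(\widetilde{\Sigma_{q_0}},\widetilde{\pi}^{*}K_C^2)=H^0(C,K_C^2\otimes\widetilde{\pi}_{*}\mathcal{O}_{\widetilde{\Sigma_{q_0}}})$ combined with the double-cover splitting $\widetilde{\pi}_{*}\mathcal{O}_{\widetilde{\Sigma_{q_0}}}=\mathcal{O}_C\oplus K_C^{-1}(D_{q_0})$, whose square-root relation $\left(K_C(-D_{q_0})\right)^2=\mathcal{O}_C\!\left(\textstyle\sum_k p^o_k\right)$ follows directly from \eqref{2e1} and the definition of $D_{q_0}$.

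I expect the main obstacle to be getting the eigenvalue assignment correct: the apparent $\tau$-invariance of a pullback is reversed by the sign in \eqref{2e5}, and it is easy to land the quadratic differentials in the wrong eigenspace. A secondary point requiring care is the boundary case $D_{q_0}=0$ (the regular locus), where $h^1(K_C)\neq0$ and the formula $\dim H^0(C,K_C(D_{q_0}))=3g-3-r_{\mathrm{odd}}/2$ must be replaced by $\dim H^0(C,K_C)=g$; the projection-formula route avoids this nuisance altogether since it never splits the Euler characteristic into $h^0$ and $h^1$.
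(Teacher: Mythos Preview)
Your proposal is correct and tracks the paper's proof closely on the second map: both derive the local formula \eqref{2e6} directly from \eqref{2e5}, both pin the image in the $(-1)$-eigenspace via the observation that $\tau$ acts by $\widetilde{x}\mapsto -\widetilde{x}$, and both finish by a dimension count. The one place you diverge is the first isomorphism: the paper does not count dimensions there at all but argues directly that every $\tau$-invariant meromorphic $1$-form descends to $C$ (with the correct pole bound along $D_{q_0}$), so surjectivity of $\omega\mapsto\widetilde{\pi}^*\omega$ is immediate; it then \emph{uses} this to read off $\dim(\,\cdot\,)^+$ and subtracts from the total to get $\dim(\,\cdot\,)^-=3g-3$. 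Your symmetric count (both injections, sum of source dimensions equals the total) reaches the same conclusion and has the virtue of not needing to verify the descent pole bound at ramification points. Your projection-formula alternative via $\widetilde{\pi}_*\mathcal{O}_{\widetilde{\Sigma_{q_0}}}=\mathcal{O}_C\oplus K_C^{-1}(D_{q_0})$ is not in the paper; it is a cleaner route that sidesteps the $D_{q_0}=0$ edge case you flagged, though to make it match the \emph{specific} maps in the statement (rather than just the dimensions) you would still need the sign bookkeeping from \eqref{2e5}.
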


\begin{proof}
Observe that $\widetilde{\pi}^*\omega$ is $\sigma$-invariant for any $\omega \in H^0\left(C, K_C\left(D_{q_0}\right)\right)$, and conversely, every $\sigma$-invariant form descends to $C$, yielding the first isomorphism. The local formula \eqref{2e6} follows directly from \eqref{2e5}, which ensures the well-definedness of the second map, as the involution $\sigma$ acts as $\widetilde{x} \mapsto -\widetilde{x}$ on $\widetilde{U} \cap \widetilde{\Sigma_{q_0}}$.

The second map is injective; to show it is an isomorphism, we compare the dimensions of the respective spaces. For $q_0 \notin \mathcal{B}^{\mathrm{reg}}$, such that $D_{q_0} \neq 0$ (the case $D_{q_0} = 0$ being analogous), we have:
\begin{align*}
\dim_{\mathbb{C}} H^0\left(\widetilde{\Sigma_{q_0}}, K_{\widetilde{\Sigma_{q_0}}} \left(\widetilde{D_{q_0}}\right)\right) &= \widetilde{g} - 1 + \deg \widetilde{D_{q_0}} = 6g - 6 - r_{\mathrm{odd}}/2, \\
\dim_{\mathbb{C}} H^0\left(\widetilde{\Sigma_{q_0}}, K_{\widetilde{\Sigma_{q_0}}} \left(\widetilde{D_{q_0}}\right)\right)^+ &= \dim_{\mathbb{C}} H^0\left(C, K_C\left(D_{q_0}\right)\right) = 3g - 3 - r_{\mathrm{odd}}/2.
\end{align*}
Consequently, by subtracting the invariant dimension from the total dimension, we obtain:
\[
\dim_{\mathbb{C}} H^0\left(\widetilde{\Sigma_{q_0}}, K_{\widetilde{\Sigma_{q_0}}} \left(\widetilde{D_{q_0}}\right)\right)^- = 3g - 3 = \dim_{\mathbb{C}} H^0\left(C, K_C^2\right).
\]
\end{proof}

\begin{remark}
In the case where $q_0 \in \mathcal{B}^{\mathrm{reg}}$ (i.e., $D_{q_0} = 0$), the formula \eqref{2e6} was previously established in Mazzeo--Swoboda--Weiss--Witt \cite[Section~2.3, Equation~(4)]{mazzeo2019asymptotic}. The present work extends this construction to the discriminant locus $\mathcal{D}$.
\end{remark}

The isomorphism \eqref{2e6} provides a robust framework for interpreting the basis $\left\{\omega_j\big|_q\right\}_{j=1}^{3g-3}$ constructed in Step (4) of Construction~\ref{construction}. Although these meromorphic forms are defined on varying spectral fibers as $q$ ranges over $\mathcal{U}$, which renders them not immediately comparable, the correspondence in \eqref{2e6} identifies them with a family of quadratic differentials on the fixed base curve $C$. This identification effectively makes it possible to track their dependence on $q$ as the spectral curve approaches the discriminant locus $\mathcal{D}$.

\section{Special K\"ahler Metric Singularities on $\mathcal{B}_d$ for $0 \leq d \leq 2g-3$}

In this section, we investigate the asymptotic behavior of the special K\"ahler metric in the vicinity of the stratum $\mathcal{B}_d$ for $0 \leq d \leq 2g-3$ (noting that $\mathcal{B}_0 = \mathcal{B}^{\mathrm{reg}}$). Let $q_0 \in \mathcal{B}_d$ be a fixed point whose associated divisor is 
\begin{equation}
\label{3e1}
\operatorname{div}\left(q_0\right) = 2p_1\big|_{q_0} + 2p_3\big|_{q_0} + \cdots + 2p_{2d-1}\big|_{q_0} + \sum_{k=2d+1}^{4g-4} p_k\big|_{q_0}.
\end{equation}
In this case, the branch divisor of the covering $\widetilde{\pi} \colon \widetilde{\Sigma_{q_0}} \to C$ is given by $\sum\limits_{k=2d+1}^{4g-4} p_k\big|_{q_0}$. For a sufficiently small, simply connected neighborhood $\mathcal{U} \subseteq \mathcal{B}$ of $q_0$, we analyze the special K\"ahler metric $\omega_{\mathrm{SK}}$ on $\mathcal{U} \cap \mathcal{B}^{\mathrm{reg}}$ by implementing the steps outlined in Construction~\ref{construction}.

The section is organized as follows. Subsection~\ref{prelim Bd} establishes the necessary preparatory lemmas. In Subsection~\ref{step1 Bd}, we implement Step~(1) of Construction~\ref{construction} by constructing $3g-3$ pairs of curves on $\widetilde{\Sigma_{q_0}}$ with the prescribed intersection numbers. Subsection~\ref{step2 Bd} carries out Step~(2), deforming these curves to the spectral fibers $\widetilde{\Sigma_q}$ for each $q \in \mathcal{U}$. Subsection~\ref{steps3-4 Bd} executes Steps~(3) and~(4), defining local coordinate functions on $\mathcal{U}$ to characterize the metric $\omega_{\mathrm{SK}}$ and its singular behavior, leading to the proof of Theorem~\ref{main Bd}. As Corollaries~\ref{metric Bd} and~\ref{extension Bd}, we analyze the metric $\omega_{\mathrm{SK}, \mathcal{B}_d}$ and its K\"ahler potential $\mathcal{K}_d$ associated with the subintegrable system $\mathcal{M}_d \to \mathcal{B}_d$. Finally, Subsection~\ref{0 Bd} investigates the limiting behavior of $\omega_{\mathrm{SK}}$ along the complex line spanned by $q_0$, focusing on its asymptotic singularity at the origin.

\subsection{Preliminary Lemmas}
\label{prelim Bd}

We first investigate the topology of the (branched) double cover.
\begin{definition}
Let $\varpi \colon Y \to X$ be a double cover of Riemann surfaces, and let $\mathbb{D} \subseteq \mathbb{C}$ be a disc. An embedding $\iota \colon \mathbb{D} \hookrightarrow X$ is termed a \emph{trivializing disc} if the branch divisor of $\varpi$ is contained within $\iota\left(\mathbb{D}\right)$ and the preimage $\varpi^{-1}\bigl( X \setminus \iota\left(\mathbb{D}\right) \bigr) \subseteq Y$ consists of a disjoint union of two copies of $X \setminus \iota\left(\mathbb{D}\right)$.
\end{definition}

For any $q_0 \in \mathcal{B}^{\mathrm{reg}}$, the spectral cover $\pi \colon \Sigma_{q_0} \to C$ was shown to admit a trivializing disc in Baraglia \cite[Proposition~3.1.2]{baraglia2016monodromy}. By suitably adapting his topological argument, we establish the following extension to the discriminant locus, adopting a perspective suggested by a private discussion with David Baraglia.

\begin{lemma}
\label{trivializing disc}
Let $\varpi \colon Y \to X$ be a branched double cover of Riemann surfaces possessing at least one branch point. Then there exists a trivializing disc for $\varpi$.
\end{lemma}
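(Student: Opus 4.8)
The plan is to translate the geometric statement into a statement about the $\mathbb{Z}/2\mathbb{Z}$-monodromy of $\varpi$ and then to remove the \emph{global} part of that monodromy by point-pushing homeomorphisms of $X$. First I would record the numerology. As $\varpi$ has degree two, every ramification point has index two, and a split double cover has empty branch locus; hence the presence of a branch point forces $Y$ to be connected. Writing the branch locus as $B=\{p_1,\dots,p_m\}$, the Riemann--Hurwitz identity $\chi(Y)=2\chi(X)-m$ shows that $m$ is even, so $m\ge 2$. Fixing a basepoint $x_0\in X\setminus B$, the cover is classified by a homomorphism $\rho\colon \pi_1(X\setminus B,x_0)\to\mathbb{Z}/2\mathbb{Z}$ sending each small loop $\ell_j$ around $p_j$ to the nontrivial element. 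In these terms, an embedded disc $D$ with $B\subseteq\operatorname{int}(D)$ is a trivializing disc if and only if $\rho$ restricts to the zero homomorphism on $\pi_1(X\setminus D)$, since a section of $\varpi$ over $X\setminus D$ is precisely a splitting of the restricted cover.

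Next I would produce a first candidate. Choose simple closed curves $\{a_i,b_i\}_{i=1}^{g}$ based at $x_0$, disjoint from the finite set $B$, realizing a symplectic basis of $H_1(X;\mathbb{Z})$ and cutting $X$ into a single cell whose open interior $\Delta^{\circ}$ therefore contains all of $B$. Taking $D=\overline{\Delta}\subseteq\Delta^{\circ}$ a slightly shrunk closed disc still containing $B$, the complement $X\setminus D$ deformation retracts onto the spine $\bigcup_i(a_i\cup b_i)$. Consequently $D$ is trivializing exactly when the tuple $\bigl(\rho(a_i),\rho(b_i)\bigr)_{i}\in H^1(X;\mathbb{Z}/2\mathbb{Z})$ vanishes, and this tuple is the sole obstruction.

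To annihilate the obstruction I would exploit the defining relation $\rho(\ell_j)=1$. Dragging the branch point $p_1$ once around a loop $\gamma$ produces a point-pushing homeomorphism $P_\gamma$ of $X$, isotopic to the identity and preserving $B$ setwise, whose effect on monodromy is $(P_\gamma^{*}\rho)(c)=\rho(c)+\langle c,\gamma\rangle$ modulo $2$. Choosing $\gamma=b_i$ (respectively $\gamma=a_i$) flips $\rho(a_i)$ (respectively $\rho(b_i)$) and leaves all other spine values fixed, so after finitely many such pushes we obtain a homeomorphism $\Phi$ of the pair $(X,B)$ with $\Phi^{*}\rho$ vanishing on the entire spine. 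Then $D$ is a trivializing disc for $\Phi^{*}\rho$, and by transport of structure $\Phi(D)$ is an embedded disc containing $\Phi(B)=B$ over whose complement $\rho$ splits; that is, $\Phi(D)$ is the trivializing disc sought.

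The crux I expect is precisely this cancellation of the global monodromy while retaining an embedded disc that swallows every branch point: naively isotoping a spine curve to wind around a branch point would eject that point from the complementary cell. The virtue of the point-pushing description is that the modification is effected by an ambient homeomorphism of $X$, so the deformed cell is automatically an embedded disc still containing $B$. The technical points to be filled in are the displayed transformation law for $P_\gamma^{*}\rho$ and the verification that the successive pushes can be carried out with disjointly supported, hence non-interfering, isotopies; together these adapt Baraglia's topological argument from the smooth spectral cover to an arbitrary branched double cover.
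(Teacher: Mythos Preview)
Your argument is correct and follows the same strategy as the paper's proof: fix a candidate disc $D\supset B$, identify the obstruction to $D$ being trivializing as the class $\bigl(\rho(a_i),\rho(b_i)\bigr)\in H^1(X;\mathbb{Z}/2\mathbb{Z})$, and kill it by applying homeomorphisms of $(X,B)$ whose effect on the monodromy is $\rho\mapsto\rho+\langle\,\cdot\,,\gamma\rangle$. The only difference is the choice of homeomorphism: the paper uses a single Dehn twist along a simple closed curve $\tilde\gamma\subset X\setminus B$ in the homology class $[l_1]+[\gamma]$ and computes its effect via a commutative diagram, whereas you use the point-push of the branch point $p_1$ along $\gamma$; both yield the same transformation law on the mod-$2$ monodromy, so the iteration step and conclusion are identical.
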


\begin{proof}
Let the branch divisor of $\varpi$ be $\sum_{i=1}^k x_i$ (with $k \ge 1$) and set $X' \coloneqq X \setminus \{x_1, \dots, x_k\}$. For each $i$, let $[l_i]$ denote a generator of $H_1\left(X', \mathbb{Z}\right)$ encircling the point $x_i$. The monodromy of $\varpi$ defines a homomorphism $\phi \colon H_1\left(X', \mathbb{Z}\right) \to \mathbb{Z}_2$ such that $\phi\left([l_i]\right) = 1$ for all $i$. Fix an embedding $\iota \colon \mathbb{D} \to X$ whose image contains all branch points. It induces a map
\[
s_\iota \colon H_1\left(X, \mathbb{Z}\right) \cong H_1\left(X \setminus \iota\left(\mathbb{D}\right), \mathbb{Z}\right) \to H_1\left(X', \mathbb{Z}\right),
\]
where the second arrow is induced by the inclusion $X \setminus \iota\left(\mathbb{D}\right) \hookrightarrow X'$. Note that $\iota$ is a trivializing disc if and only if the composition $\phi \circ s_\iota \colon H_1\left(X, \mathbb{Z}\right) \to \mathbb{Z}_2$ is trivial. 

Given $[\gamma] \in H_1\left(X, \mathbb{Z}\right)$, consider the class $[l_1] + s_\iota\left([\gamma]\right) \in H_1\left(X', \mathbb{Z}\right)$ and choose a representative loop $\widetilde{\gamma}$. Let $T_{\widetilde{\gamma}} \colon X \to X$ denote the Dehn twist along $\widetilde{\gamma}$, supported in a small neighborhood of $\widetilde{\gamma}$ disjoint from the branch points. Thus, $T_{\widetilde{\gamma}}$ preserves the branch divisor and restricts to a homeomorphism $T_{\widetilde{\gamma}} \colon X' \to X'$. Define a new embedding $\iota' \coloneqq T_{\widetilde{\gamma}} \circ \iota \colon \mathbb{D} \to X$, which induces a homomorphism $s_{\iota'} \colon H_1\left(X, \mathbb{Z}\right) \to H_1\left(X', \mathbb{Z}\right)$. The commutative diagram
\[
\begin{tikzcd}[row sep=large, column sep=large]
& H_1(X, \mathbb{Z}) \arrow[d, "{(T_{\widetilde{\gamma}})_*}"'] \arrow[r, "\cong"'] \arrow[rr, "s_\iota", bend left=20] & H_1(X \setminus \iota(\mathbb{D}), \mathbb{Z}) \arrow[r] \arrow[d, "{(T_{\widetilde{\gamma}})_*}"] & H_1(X', \mathbb{Z}) \arrow[d, "{(T_{\widetilde{\gamma}})_*}"] \\
& H_1(X, \mathbb{Z}) \arrow[r, "\cong"'] \arrow[rr, "s_{\iota'}", bend right=20] & H_1(X \setminus \iota'(\mathbb{D}), \mathbb{Z}) \arrow[r] & H_1(X', \mathbb{Z})
\end{tikzcd}
\]
shows that $s_{\iota'} = (T_{\widetilde{\gamma}})_* \circ s_\iota \circ (T_{\widetilde{\gamma}})_*^{-1}$. For any $[\tau] \in H_1\left(X, \mathbb{Z}\right)$, a direct computation yields
\[
s_{\iota'}([\tau]) = s_\iota([\tau]) + \big\langle [\tau], [\gamma] \big\rangle [l_1],
\]
and consequently
\[
(\phi \circ s_{\iota'})([\tau]) = (\phi \circ s_\iota)([\tau]) + \big\langle [\tau], [\gamma] \big\rangle.
\]
Starting from an arbitrary embedding $\iota$, we can thus obtain a trivializing disc by applying a finite sequence of Dehn twists.
\end{proof}

\begin{lemma}
\label{dual 1forms}
Let $\varpi \colon X \to Y$ be a (branched) double cover and let $\big\{\mathfrak{a}_i, \mathfrak{b}_i\big\}_{i=1}^{I}$ be loops on $X$ that form a symplectic basis of $H_1\left(X, \mathbb{R}\right)^-$. Let $D = \sum_{j=1}^{J} y_j$ be a divisor on $Y$ such that each preimage $\varpi^{-1}\left(y_j\right)$ consists of two distinct points $\big\{\widetilde y_{j,1}, \widetilde y_{j,2}\big\}$ on $X$. Let $\mathbb{D}_j$ be mutually disjoint sufficiently small disks centered at $\widetilde y_{j,1}$ and define $\mathfrak{a}_{I+j} \coloneqq \partial \mathbb{D}_j$. Then there exists a unique collection of meromorphic 1-forms $\big\{\omega_i\big\}_{i=1}^{I+J}$ forming a basis of $H^0\big(X, K_X\left(\varpi^*D\right)\big)^-$ and satisfying
\[
\int_{\mathfrak{a}_i} \omega_j = \delta_{ij}, \qquad 1 \leq i, j \leq I+J,
\]
and moreover:
\begin{enumerate}
    \item $\omega_i$ is holomorphic for $1 \leq i \leq I$;
    \item $\omega_{I+j}$ has simple poles only at $\widetilde y_{j,1}$ and $\widetilde y_{j,2}$ for $1 \leq j \leq J$, with residues $\pm \frac{1}{2\pi i}$ respectively.
\end{enumerate}
\end{lemma}

\begin{proof}
By standard duality theory, there exists a unique collection of holomorphic 1-forms $\big\{\omega_i\big\}_{i=1}^{I} \subseteq H^0\big(X, K_X\big)^-$ such that 
\[
\int_{\mathfrak{a}_i} \omega_j = \delta_{ij}, \qquad 1 \leq i, j \leq I.
\]
For each $1 \le j \le J$, let $\alpha_j$ be a 1-form on $X$ with a unique simple pole at $\widetilde{y}_{j,1}$. Define $\omega_{I+j} \coloneqq \alpha_j - \sigma^* \alpha_j$, where $\sigma \colon X \to X$ is the sheet-exchange involution. By construction, $\omega_{I+j}$ belongs to the $(-1)$-eigenspace and has simple poles only at $\widetilde{y}_{j,1}$ and $\widetilde{y}_{j,2}$. Suitably rescaling $\omega_{I+j}$ to set these residues at $\pm \frac{1}{2\pi i}$ respectively, the residue theorem yields
\[
\int_{\mathfrak{a}_{I+i}} \omega_{I+j} = \delta_{ij}, \qquad 1 \le i, j \le J.
\]
Note that each holomorphic 1-form $\omega_i$ integrates to zero over the contractible curves $\mathfrak{a}_{I+j}$ by Cauchy's theorem. We then adjust each $\omega_{I+j}$ by adding a suitable linear combination of $\big\{\omega_i \big\}_{i=1}^I$ to ensure
\[
\int_{\mathfrak{a}_i} \omega_{I+j} = 0, \qquad 1 \le i \le I, \quad 1 \le j \le J.
\]
The union $\big\{ \omega_i\big\}_{i=1}^I \cup \big\{ \omega_{I+j} \big\}_{j=1}^J$ then forms the required unique basis.
\end{proof}

Let $\mathbb{D}(r) \coloneqq \big\{|z| < r\big\}$. The following lemma provides a key estimate for subsequent sections. Let $q_0 \in H^0(C, K_C^2)$ have a double zero at $p \in C$. Let $(U, z)$ be a local chart centered at $p$ biholomorphic to $\mathbb{D}(2)$ such that $q_0 = C_0 z^2 \, \mathrm{d}z^2$. For a small simply connected neighborhood $\mathcal{U} \ni q_0$, each $q \in \mathcal{U}$ can be expressed as
\[
q = (z - \varepsilon_1)(z - \varepsilon_2) g_q(z)^2 \, \mathrm{d}z^2,
\]
where $g_q$ is non-vanishing and $|\varepsilon_i|$ are small. Define $f \colon \mathcal{U} \to \mathbb{R}$ by $f(q) \coloneqq \left|\, \int_{\partial \mathbb{D}(1)} \sqrt{q} \,\right|$, which is well-defined as the two branches of $\sqrt{q}$ differ only by a sign.

\begin{lemma}
\label{corr estimate}
There exist positive constants $C_1, C_2$, independent of $q \in \mathcal{U}$, such that
\[
C_1 |\varepsilon_1 - \varepsilon_2|^2 \le f(q) \le C_2 |\varepsilon_1 - \varepsilon_2|^2.
\]
In particular, $f(q) = 0$ if and only if $\varepsilon_1 = \varepsilon_2$; i.e., $q$ has a unique double zero in $U$.
\end{lemma}

\begin{proof}
Consider $q = (z - \varepsilon_1)(z - \varepsilon_2) g(z)^2 \, \mathrm{d}z^2 \in \mathcal{U}$. Shrinking $\mathcal{U}$ if necessary, we may assume that $|g(z)|$ has a uniform upper bound $M$ on $U$, while $|\varepsilon_i|$ are sufficiently small. Then by definition
\[
f(q) = \left|\,\int_{\partial \mathbb{D}(1)-\varepsilon_2} \sqrt{\big.(z - \varepsilon_1)(z - \varepsilon_2)} \, g(z) \, \mathrm{d}z \right|
= \left|\,\int_{\partial \mathbb{D}(1)} \sqrt{t\bigl(t - (\varepsilon_1-\varepsilon_2)\bigr)} \, g(t) \, \mathrm{d}t\right|,
\]
where $t := z - \varepsilon_2$ via the translation coordinate. We now consider the coefficients of the Laurent expansion, which directly yield the value of the integral.
\begin{itemize}
    \item Fix a branch of $\sqrt{t\bigl(t - (\varepsilon_1-\varepsilon_2)\bigr)}$ whose Laurent expansion is
    \[
    \sqrt{t\bigl(t - (\varepsilon_1-\varepsilon_2)\bigr)} = \sum_{k=0}^{\infty} c_k t^{1-k}, \quad\text{where}\quad  
    c_k = (-1)^k \binom{1/2}{k} (\varepsilon_1 - \varepsilon_2)^k.
    \]
    
    \item Let the Taylor expansion of $g(t)$ be given by
    \[
    g(t) = \sum_{k=0}^{\infty} g_k t^k, \quad\text{where}\quad 
    g_k = \frac{g^{(k)}(0)}{k!} = \frac{1}{2\pi \mathrm{i}} \int_{\partial \mathbb{D}_1} \frac{g(\xi)}{\xi^{k+1}} \, \mathrm{d}\xi.
    \]
\end{itemize}
Note that $f(q)$ is determined by the $t^{-1}$-coefficient in the Laurent expansion. Specifically, by the residue theorem:
\[
\frac{1}{2\pi}f(q) = \left|\sum_{k=0}^\infty c_{k+2} g_k \right| = \left|\,\sum_{k=0}^{\infty} (-1)^k\binom{1/2}{k+2} g_k (\varepsilon_1 - \varepsilon_2)^k \right| \cdot |\varepsilon_1 - \varepsilon_2|^2.
\]
By Cauchy's estimate, $|g_k| < M$ for all $k$. In particular, $g(0) = g_0$ tends to $C_0^{1/2} \neq 0$ as $q \to q_0$ since $q_0 = C_0 z^2 \, \mathrm{d}z^2$. Noting that $\left|\binom{1/2}{k+2}\right|$ decays like $k^{-3/2}$ by Stirling's formula, we have
\begin{align*}
\left|\,\sum_{k=0}^{\infty} (-1)^k\binom{1/2}{k+2} g_k (\varepsilon_1 - \varepsilon_2)^k \right| &\le C_2 := \sum_{k=0}^{\infty} \left| \binom{1/2}{k+2} \right| M < \infty\\
\left|\,\sum_{k=0}^{\infty} (-1)^k\binom{1/2}{k+2} g_k (\varepsilon_1 - \varepsilon_2)^k \right| &\ge \frac{|g_0|}{8} - \left|\,\sum_{k=0}^{\infty}(-1)^{k+1}\binom{1/2}{k+3} g_{k+1} (\varepsilon_1 - \varepsilon_2)^k \right| \cdot |\varepsilon_1 - \varepsilon_2|\\
&> C_1 := \frac{|C_0|^{1/2}}{16}
\end{align*}
as $\varepsilon_1, \varepsilon_2$ are sufficiently small. Consequently, $C_1 \big|\varepsilon_1 - \varepsilon_2\big|^2 \le \frac{f(q)}{2\pi} \le C_2 \big|\varepsilon_1 - \varepsilon_2\big|^2$.
\end{proof}

\subsection{Construction of Integration Contours}
\label{step1 Bd}

In this subsection, we implement Step~(1) of Construction~\ref{construction}. We construct $3g-3$ pairs of smooth curves $\left\{ \mathfrak{a}_i\big|_{q_0}, \mathfrak{b}_i\big|_{q_0} \right\}_{i=1}^{3g-3}$ on $\widetilde{\Sigma_{q_0}}$ satisfying the intersection properties
\begin{equation}
\label{3e3}
\left\langle \mathfrak{a}_i\big|_{q_0}, \mathfrak{b}_j\big|_{q_0} \right\rangle = \delta_{ij}, \quad 
\left\langle \mathfrak{a}_i\big|_{q_0}, \mathfrak{a}_j\big|_{q_0} \right\rangle = 
\left\langle \mathfrak{b}_i\big|_{q_0}, \mathfrak{b}_j\big|_{q_0} \right\rangle = 0, \quad 1 \leq i, j \leq 3g-3.
\end{equation}
Let $B = \Big\{1, \dots, g\Big\} \cup \Big\{g+d+1, \dots, 3g-3\Big\}$. Notably, the sub-collection $\left\{ \mathfrak{a}_i\big|_{q_0}, \mathfrak{b}_i\big|_{q_0} \right\}_{i \in B}$ constitutes a symplectic basis of $H_1\left(\widetilde{\Sigma_{q_0}}, \mathbb{Z}\right)^-$.

By Lemma~\ref{trivializing disc}, there exists a trivializing disc $\mathbb{D}$ for $\widetilde{\pi} \colon \widetilde{\Sigma_{q_0}} \to C$. Fix a symplectic basis $\big\{ \alpha_i, \beta_i \big\}_{i=1}^{g}$ of $H_1(C, \mathbb{Z})$ consisting of oriented smooth loops on $C$ disjoint from $\overline{\mathbb{D}}$. The preimage $\widetilde{\pi}^{-1}(\alpha_i)$ consists of two disjoint loops $\left\{ a_i\big|_{q_0}, a_{i+g}\big|_{q_0} \right\}$ on $\widetilde{\Sigma_{q_0}}$, and similarly, $\widetilde{\pi}^{-1}(\beta_i)$ yields $\left\{ b_i\big|_{q_0}, b_{i+g}\big|_{q_0} \right\}$ for $1 \leq i \leq g$. We define the first $g$ pairs of loops on $\widetilde{\Sigma_{q_0}}$ as
\begin{equation}
\label{3e13}
\mathfrak{a}_i\big|_{q_0} \coloneqq \frac{\sqrt{2}}{2} \left( a_i\big|_{q_0} - a_{i+g}\big|_{q_0} \right), \quad 
\mathfrak{b}_i\big|_{q_0} \coloneqq \frac{\sqrt{2}}{2} \left( b_i\big|_{q_0} - b_{i+g}\big|_{q_0} \right), \qquad 1 \le i \le g.
\end{equation}
These loops satisfy the intersection relations
\[
\left\langle \mathfrak{a}_i\big|_{q_0}, \mathfrak{b}_j\big|_{q_0} \right\rangle = \delta_{ij}, \quad 
\left\langle \mathfrak{a}_i\big|_{q_0}, \mathfrak{a}_j\big|_{q_0} \right\rangle = 
\left\langle \mathfrak{b}_i\big|_{q_0}, \mathfrak{b}_j\big|_{q_0} \right\rangle = 0, \qquad 1 \le i,j \le g.
\]

To construct the remaining $2g-3$ pairs of curves, we first define a collection of smooth curves $\left\{ \ell_i\big|_{q_0} \right\}_{i=1}^{4g-6}$ contained within the trivializing disc $\mathbb{D}$.
\begin{enumerate}
    \item For $1 \le i \le d$, define $\ell_{2i-1}\big|_{q_0} \coloneqq \partial \mathbb{D}_{2i-1}\big|_{q_0}$ and $\ell_{2i}\big|_{q_0}$ as a simple curve connecting $p_{2i-1}\big|_{q_0}$ to $p_{2d+1}\big|_{q_0}$, where $\mathbb{D}_{2i-1}\big|_{q_0}$ is a small disc centered at $p_{2i-1}\big|_{q_0}$.

    \item For $2d+1 \le i \le 4g-6$, define $\ell_{i}\big|_{q_0} \coloneqq \partial \mathbb{D}_{i}\big|_{q_0}$, where $\mathbb{D}_{i}\big|_{q_0}$ is a small disc containing $p_i\big|_{q_0}, p_{i+1}\big|_{q_0}$ and disjoint from other points in $\operatorname{Zero}(q_0)$.
\end{enumerate}
A schematic illustration of these curves is provided in the left panel of Figure~1a. Without loss of generality, we require the following configurations:
\begin{enumerate}
    \item The arcs $\left\{ \ell_{2i}\big|_{q_0} \right\}_{i=1}^d$ meet only at $p_{2d+1}\big|_{q_0}$ with distinct tangent directions. Each $\ell_{2i}\big|_{q_0}$ intersects $\ell_{2i-1}\big|_{q_0}$ and $\ell_{2d+1}\big|_{q_0}$ transversely at one point, and is disjoint from others.

    \item For $2d+1 \leq j, k \leq 4g-6$, each loop $\ell_j\big|_{q_0}$ is disjoint from $\ell_k\big|_{q_0}$ unless $j = k \pm 1$, where they intersect transversely at two points.
\end{enumerate}

\begin{figure}[htbp]
    \centering
    \includegraphics[width=0.75\linewidth]{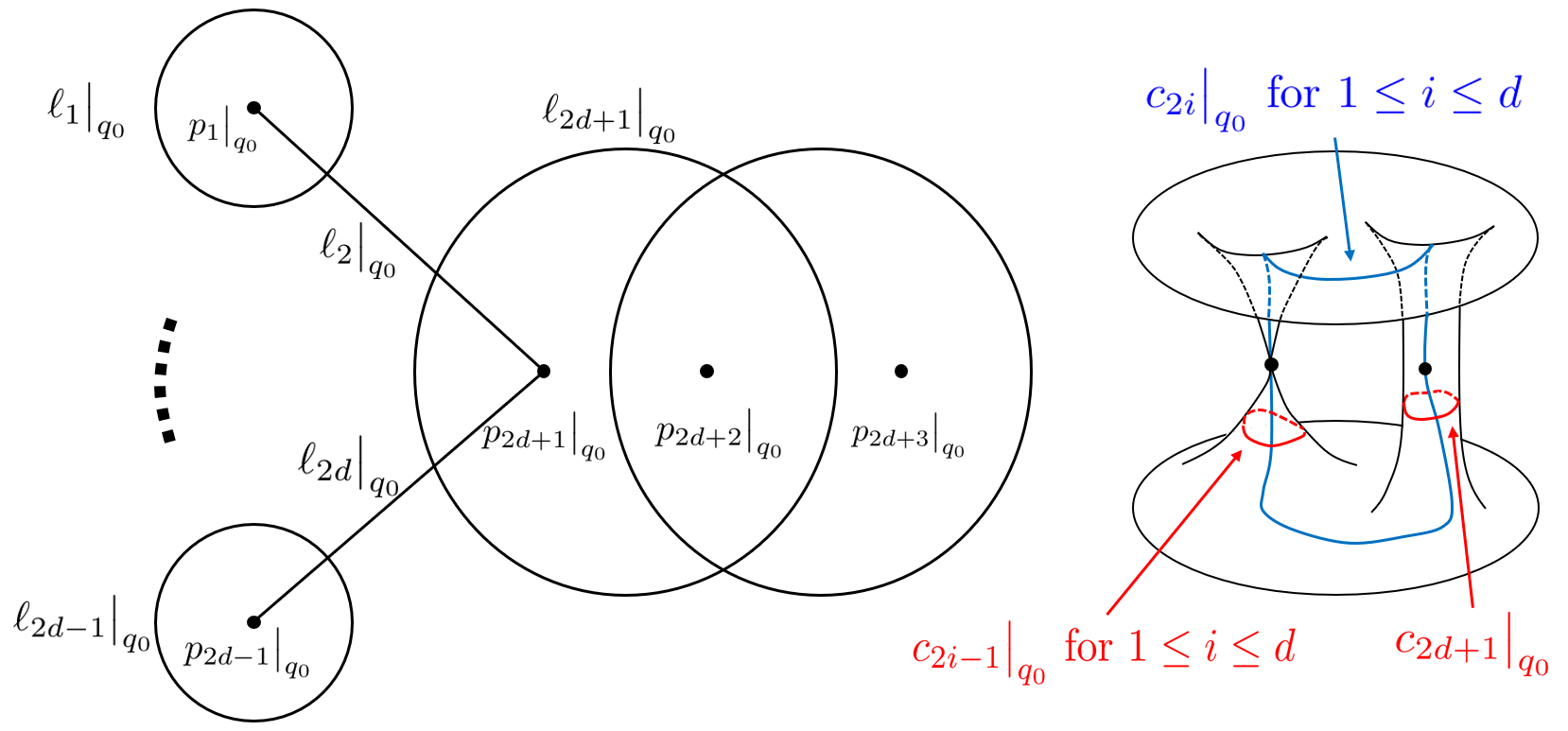}
    \caption*{Figure 1a: The curves $\left\{\ell_k\big|_{q_0}\right\}_{k=1}^{4g-6}$ and their preimages}
\end{figure}

Next, we consider their preimages on the spectral curve:
\begin{enumerate}
    \item For $1 \le i \le d$, the preimage $\widetilde{\pi}^{-1}\left( \ell_{2i-1}\big|_{q_0} \right)$ consists of two distinct contractible loops. Let $c_{2i-1}\big|_{q_0}$ be one such loop (red curve in Figure~1a). We define $c_{2i}\big|_{q_0} \coloneqq \widetilde{\pi}^{-1}\left( \ell_{2i}\big|_{q_0} \right)$ (blue curves in Figure~1a), which lifts to a non-closed curve on $\widetilde{\Sigma_{q_0}}$.

    \item For $2d+1 \le i \le 4g-6$, the preimage $\widetilde{\pi}^{-1}\left( \ell_i\big|_{q_0} \right)$ consists of two distinct non-contractible loops on $\widetilde{\Sigma_{q_0}}$. We define $c_i\big|_{q_0}$ as one of these loops.
\end{enumerate}

To define intersection numbers, we specify orientations for $\left\{ c_i\big|_{q_0} \right\}_{i=1}^{4g-6}$. Fixing $c_{2d+1}\big|_{q_0}$, we orient $\left\{c_{2i}\big|_{q_0}\right\}_{i=1}^d$ and $\left\{c_k\big|_{q_0}\right\}_{k=2d+1}^{4g-6}$ one by one such that
\[
\left\langle c_k\big|_{q_0}, c_{k+1}\big|_{q_0} \right\rangle = 1, \quad \left\langle c_{2i}\big|_{q_0}, c_{2d+1}\big|_{q_0} \right\rangle = 1, \quad 1 \le i \le d, \ 2d+1 \le k \le 4g-7.
\]
Finally, we orient $\left\{c_{2i-1}\big|_{q_0}\right\}_{i=1}^d$ such that $\left\langle c_{2i-1}\big|_{q_0}, c_{2i}\big|_{q_0} \right\rangle = 1$ for $1 \le i \le d$. 

\begin{lemma}
For $1 \le i, j \le d$ with $i \neq j$, the intersection numbers satisfy $\left\langle c_{2i}\big|_{q_0}, c_{2j}\big|_{q_0} \right\rangle = \pm 1$.
\end{lemma}

\begin{proof}
By construction, $c_{2i}\big|_{q_0}$ and $c_{2j}\big|_{q_0}$ intersect only at $\widetilde{\pi}^{-1}\left( p_{2d+1}\big|_{q_0} \right)$. It suffices to verify that their intersection at this point is transverse.

Let $(U, z)$ and $(V, y)$ be local charts on $C$ and $\widetilde{\Sigma_{q_0}}$ centered at $p_{2d+1}\big|_{q_0}$ and $\widetilde{\pi}^{-1}\left( p_{2d+1}\big|_{q_0} \right)$ respectively, such that $\widetilde{\pi}$ is given by $z = y^2$. If $\ell_{2i}\big|_{q_0}$ has tangent direction $\arg z\theta_i \in \mathbb{R}/2\pi\mathbb{Z}$ at $z=0$, then the tangent direction of $c_{2i}\big|_{q_0}$ at $y=0$ satisfies $\arg y = \frac{\theta_i}{2} \pmod{\pi}$.

Since the arcs $\ell_{2i}\big|_{q_0}$ have distinct tangent directions at $p_{2d+1}\big|_{q_0}$, we have $\theta_i \not\equiv \theta_j \pmod{2\pi}$ for $i \neq j$. This implies $\theta_i/2 \not\equiv \theta_j/2 \pmod{\pi}$, ensuring that $c_{2i}\big|_{q_0}$ and $c_{2j}\big|_{q_0}$ have distinct tangent lines at $y=0$. Thus, their intersection is transverse.
\end{proof}

Note that all other intersection numbers vanish due to disjointness. For $1 \le i\neq j \le d$, let $n_{ij} \coloneqq \left\langle c_{2i}\big|_{q_0}, c_{2j}\big|_{q_0} \right\rangle \in \{ \pm 1 \}$. We define the following curves on $\widetilde{\Sigma_{q_0}}$:
\begin{equation}
\label{3e2}
\begin{aligned}
\mathfrak{a}_{g+i}\big|_{q_0} &\coloneqq c_{2i-1}\big|_{q_0}, \quad \mathfrak{b}_{g+i}\big|_{q_0} \coloneqq c_{2i}\big|_{q_0} - \sum_{k=i+1}^{d} n_{ik} c_{2k-1}\big|_{q_0}, \quad 1 \leq i \leq d, \\
\mathfrak{a}_{g+k}\big|_{q_0} &\coloneqq \sum_{j=1}^{k} c_{2j-1}\big|_{q_0}, \quad \mathfrak{b}_{g+k}\big|_{q_0} \coloneqq c_{2k}\big|_{q_0}, \quad d+1 \leq k \leq 2g-3.
\end{aligned}
\end{equation}
Combined with the first $g$ pairs, these form a collection $\left\{ \mathfrak{a}_i\big|_{q_0}, \mathfrak{b}_i\big|_{q_0} \right\}_{i=1}^{3g-3}$ satisfying \eqref{3e3}. In particular, the sub-collection
\[
\left\{ \mathfrak{a}_i\big|_{q_0}, \mathfrak{b}_i\big|_{q_0} \right\}_{i=1}^{g} \cup \left\{ \mathfrak{a}_i\big|_{q_0}, \mathfrak{b}_i\big|_{q_0} \right\}_{i=g+d+1}^{3g-3}
\]
forms a symplectic basis of $H_1\left( \widetilde{\Sigma_{q_0}}, \mathbb{R} \right)^-$.

\subsection{Deformation of Integration Contours}
\label{step2 Bd}

In this subsection, we implement Step~(2) of Construction~\ref{construction}. We define the collection $\left\{ \mathfrak{a}_i\big|_{q}, \mathfrak{b}_i\big|_{q} \right\}_{i=1}^{3g-3}$ by deforming $\left\{ \mathfrak{a}_i\big|_{q_0}, \mathfrak{b}_i\big|_{q_0} \right\}_{i=1}^{3g-3}$ from the reference curve $\widetilde{\Sigma_{q_0}}$ to $\widetilde{\Sigma_{q}}$ for each $q \in \mathcal{U}$. For every $q \in \mathcal{U} \cap \mathcal{B}^{\mathrm{reg}}$, this collection constitutes a symplectic basis of $H_1\left(\widetilde{\Sigma_q}, \mathbb{R} \right)^-$.

Shrinking $\mathcal{U}$ if necessary, for every $q \in \mathcal{U}$:
\begin{enumerate}
    \item For $1 \leq i \leq d$, $q$ has either two simple zeros or one double zero in $\mathbb{D}_{2i-1}\big|_{q_0}$.

    \item For $2d+1 \leq k \leq 4g-6$, $q$ has exactly two simple zeros in $\mathbb{D}_k\big|_{q_0}$.

    \item There exists a disc $\mathbb{V} \subset \mathbb{D}_{2d+1}\big|_{q_0}$ centered at $p_{2d+1}\big|_{q_0}$ such that each $q$ has exactly one simple zero in $\mathbb{V}$ (see Figure~1b).
\end{enumerate}

\begin{figure}[htbp]
    \centering
    \includegraphics[width=0.7\linewidth]{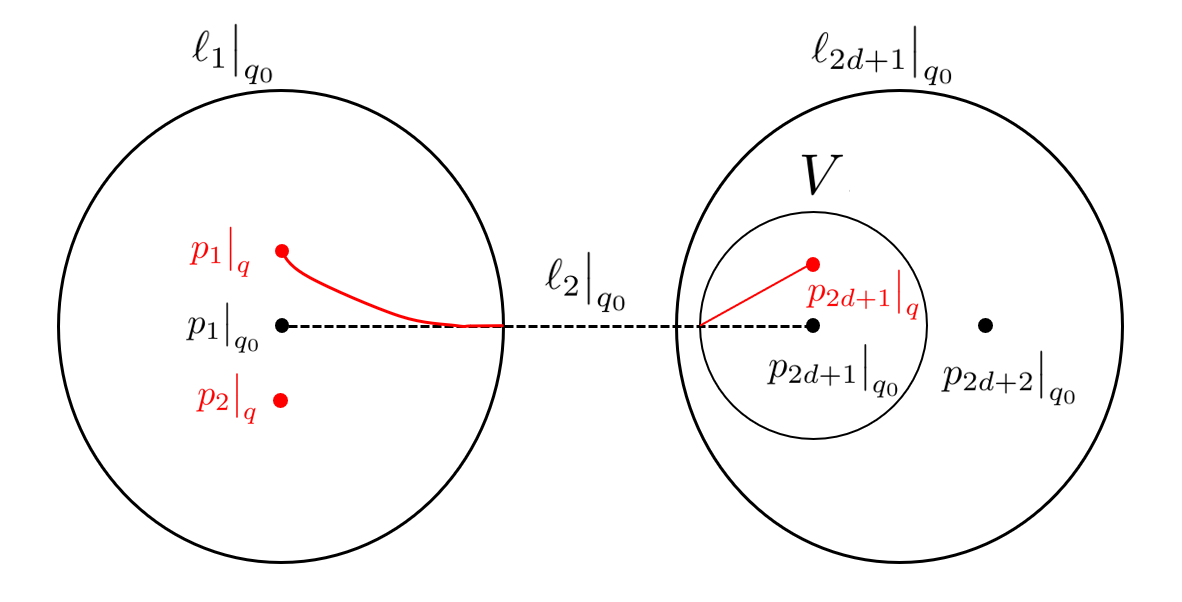}
    \caption*{Figure 1b: Deformations of $\ell_{2i}\big|_{q_0}$}
\end{figure}

All contour deformations in this subsection occur within the fibers of $\mathcal{S}_{\mathcal{U}} \coloneqq \bigsqcup_{q\in\mathcal{U}} \widetilde{\Sigma_q} \rightarrow \mathcal{U}$. Deforming a contour from $\widetilde{\Sigma_{q_0}}$ to $\widetilde{\Sigma_q}$ refers to constructing a family $\big\{\gamma_q\big\}_{q \in \mathcal{U}}$ with $\gamma_q \subseteq \widetilde{\Sigma_q}$, such that the family is continuous in the total space $\mathcal{S}_{\mathcal{U}}$.

The deformation of the first $g$ pairs $\left\{\mathfrak{a}_i\big|_{q_0},\mathfrak{b}_i\big|_{q_0}\right\}_{i=1}^g$ is obtained as follows. Using the symplectic basis $\big\{ \alpha_i, \beta_i \big\}_{i=1}^g$ of $H_1(C, \mathbb{Z})$ introduced in Section~\ref{step1 Bd}, we define for each $q \in \mathcal{U}$:
\[
\mathfrak{a}_i\big|_{q} \coloneqq \frac{\sqrt{2}}{2} \left( a_i\big|_{q} - a_{i+g}\big|_{q} \right), \quad \mathfrak{b}_i\big|_{q} \coloneqq \frac{\sqrt{2}}{2} \left( b_i\big|_{q} - b_{i+g}\big|_{q} \right), \qquad 1 \le i \le g,
\]
where $a_i\big|_q, a_{i+g}\big|_q$ (resp. $b_i\big|_q, b_{i+g}\big|_q$) are the lifts of $\alpha_i$ (resp. $\beta_i$) under the covering $\widetilde{\pi} \colon \widetilde{\Sigma_{q}} \to C$. The deformation of the remaining pairs $\left\{ \mathfrak{a}_i\big|_{q}, \mathfrak{b}_i\big|_{q} \right\}_{i=g+1}^{3g-3}$ is more subtle; to this end, we first construct deformations of the auxiliary arcs $\ell_k\big|_{q_0}$, as $q$ varies.
\begin{enumerate}
    \item For each $i \in \Big\{2k-1 \ \big|\ 1 \le k \le d \Big\} \cup \Big\{ 2d+1, \dots, 4g-6\Big\}$, we set 
    \[
    \mathbb{D}_i\big|_q \coloneqq \mathbb{D}_i\big|_{q_0}\quad \text{and} \quad \ell_i\big|_{q} \coloneqq \ell_i\big|_{q_0} = \partial \mathbb{D}_i\big|_{q_0}.
    \]
    Since these discs and curves remain stationary, we omit the subscript $\big|_{q}$ hereafter. As $\ell_i$ avoids the branch points of $\widetilde{\pi} \colon \widetilde{\Sigma_q} \to C$ for all $q \in \mathcal{U}$, its preimage $\widetilde{\pi}^{-1}(\ell_i)$ consists of two disjoint loops in $\widetilde{\Sigma_{q}}$ corresponding to the two sheets. We define $c_i\big|_{q}$ to be one of them, varying continuously from $c_i\big|_{q_0}$ in the total space $\mathcal{S}_{\mathcal{U}}$.

    \item For each $i \in \big\{1, \dots, d\big\}$ and $q \in \mathcal{U}$, we define a family of curves $\left( \ell_{2i}\big|_{q} \right)$, rather than a single curve, as the deformation of $\ell_{2i}\big|_{q_0}$. Here the notation $\left(\ \cdot\big|_{q}\right)$ denotes a family for a fixed $q$, while $\left\{\ \cdot\big|_{q}\right\}_{q}$ denotes a family parametrized by $q$.
    
    As illustrated in Figure~1b (with $i=1$), we proceed as follows. Let $p_{2d+1}\big|_{q}$ be the simple zero of $q$ in $\mathbb{V}$. On $C \setminus (\mathbb{D}_{2i-1} \cup \mathbb{V})$, we set the family $\ell_{2i}\big|_{q}$ to coincide with $\ell_{2i}\big|_{q_0}$. Inside $\mathbb{V}$, each member of the family $\ell_{2i}\big|_{q}$ is defined as the Euclidean straight-line segment in $\mathbb{V}$ from $\ell_{2i}\big|_{q_0} \cap \partial \mathbb{V}$ to $p_{2d+1}\big|_{q}$, represented by the red segment in Figure~1b. A Euclidean straight-line segment in an open set $U \subseteq C$ refers to the set $\big\{ t z_1 + (1-t) z_2 \mid t \in [0,1] \big\}$ with respect to a fixed coordinate chart $(U, z)$. Finally, inside $\mathbb{D}_{2i-1}$, we define the family $\ell_{2i}\big|_{q}$ as follows:
    \begin{enumerate}
        \item If $q$ has a double zero in $\mathbb{D}_{2i-1}$, then $\ell_{2i}\big|_{q}$ consists of arbitrary curves in $\mathbb{D}_{2i-1}$ connecting $\ell_{2i}\big|_{q_0} \cap \ell_{2i-1}$ to this double zero.
        \item If $q$ has two simple zeros in $\mathbb{D}_{2i-1}$, then $\ell_{2i}\big|_{q}$ consists of arbitrary curves in $\mathbb{D}_{2i-1}$ connecting $\ell_{2i}\big|_{q_0} \cap \ell_{2i-1}$ to one of these zeros, while ensuring the other zero does not lie on these curves.
    \end{enumerate}
    Since the deformation of $\ell_{2i}\big|_{q_0}$ is non-unique inside $\mathbb{D}_{2i-1}$, its preimage on $\widetilde{\Sigma_{q}}$ defines a collection of curves $\left( c_{2i}\big|_{q} \right)$. Their variation is simplified up to homology: in case (1), all curves in $\left( c_{2i}\big|_{q} \right)$ are homotopic relative to their endpoints; in case (2), any two lifts $c_{2i}\big|_{q}$ and $c'_{2i}\big|_{q}$ are loops in $H_1\left(\widetilde{\Sigma_{q}}, \mathbb{R}\right)^-$ satisfy the relation 
    \begin{equation}
    \label{3e4}
    c_{2i}\big|_{q} = c'_{2i}\big|_{q} + n \cdot c_{2i-1}\big|_{q} \quad \text{for some } n \in \mathbb{Z}.
    \end{equation}
\end{enumerate}

Finally, we define a collection of curves $\left\{ \mathfrak{a}_i\big|_{q}, \mathfrak{b}_i\big|_{q} \right\}_{i=g+1}^{3g-3}$ on $\widetilde{\Sigma_q}$ using the same formulas \eqref{3e2} as for $q_0$. Since the deformation depends continuously on $q$, the intersection numbers of this collection satisfy the same relations \eqref{3e3} as $q_0$. Within this collection, the curves $\left\{ \mathfrak{b}_i\big|_{q} \right\}_{i=g+1}^{g+d}$ on $\widetilde{\Sigma_{q}}$ are not uniquely determined for $q \in \mathcal{U}$; however, the following lemma ensures they can be chosen consistently and continuously in a local sense.

\begin{lemma}
\label{continuous choice}
Fix $q \in \mathcal{U} \cap \mathcal{B}^{\mathrm{reg}}$. There exists a sufficiently small neighborhood $\mathcal{V} \subseteq \mathcal{U}$ of $q$ such that for each $1 \le i \le d$, one can select a curve $c_{2i}\big|_{q'}$ from the collection $\left( c_{2i}\big|_{q'} \right)$ for each $q' \in \mathcal{V}$ such that the family $\left\{ c_{2i}\big|_{q'} \right\}_{q' \in \mathcal{V}}$ depends continuously on $q'$. 

Consequently, the family $\left\{ \mathfrak{b}_{g+i}\big|_{q'} \right\}_{q' \in \mathcal{V}}$ given by formulas \eqref{3e2} is continuous in $q'$.
\end{lemma}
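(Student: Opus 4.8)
The plan is to exploit the fact that $q$ lies in the \emph{open} regular locus $\mathcal{B}^{\mathrm{reg}}$, so that after shrinking we may take $\mathcal{V} \subseteq \mathcal{U} \cap \mathcal{B}^{\mathrm{reg}}$ to be a contractible ball; this is legitimate since the collection $\left\{ c_{2i}\big|_{q'} \right\}$ consists of genuine loops only when $q'$ is regular (in case~(2) of Subsection~\ref{step2 Bd}), and $q$ being regular forces $q'$ to be so nearby. Over such a $\mathcal{V}$ the spectral curves $\big\{ \widetilde{\Sigma_{q'}} = \Sigma_{q'} \big\}_{q' \in \mathcal{V}}$ are all smooth and assemble into a smooth, locally trivial fibration. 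The multivaluedness recorded in \eqref{3e4} is a monodromy phenomenon that disappears once the base is simply connected, and the entire content of the lemma is that a continuous section of the resulting family of $1$-cycles then exists.

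First I would track the zeros. Shrinking $\mathcal{V}$, each $q' \in \mathcal{V}$ has, for every $1 \le i \le d$, exactly two distinct simple zeros inside $\mathbb{D}_{2i-1}$ and a single simple zero $p_{2d+1}\big|_{q'}$ inside $\mathbb{V}$, all three being branch points of $\widetilde{\pi} \colon \widetilde{\Sigma_{q'}} \to C$. Because $\mathcal{V}$ is simply connected, the two-valued set of zeros in $\mathbb{D}_{2i-1}$ splits into two single-valued continuous branches $z_i(q'), z_i'(q')$ that never collide on $\mathcal{V}$; likewise $p_{2d+1}\big|_{q'}$ varies continuously. I would then fix, once and for all at the base point $q$, a representative $\ell_{2i}\big|_{q}$ terminating at a designated zero, say $z_i(q)$, together with a definite homotopy class of its portion lying in $\mathbb{D}_{2i-1} \setminus \{ z_i'(q) \}$; by \eqref{3e4} this pins down a single element $c_{2i}\big|_{q}$ of the collection.

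Next I would propagate this choice over $\mathcal{V}$. Using the continuously varying endpoints $p_{2d+1}\big|_{q'}$ and $z_i(q')$, I define $\ell_{2i}\big|_{q'}$ by deforming $\ell_{2i}\big|_{q}$ so that it follows the moving endpoints while staying in the same relative homotopy class of $\mathbb{D}_{2i-1} \setminus \{ z_i'(q') \}$ inside the disc and connecting through $\mathbb{V}$ as prescribed in Subsection~\ref{step2 Bd}. Since $z_i(q')$ and $z_i'(q')$ never coincide on $\mathcal{V}$, this yields a continuous family of embedded arcs on $C$, and hence a continuous family of lifts $c_{2i}\big|_{q'} \coloneqq \widetilde{\pi}^{-1}\big( \ell_{2i}\big|_{q'} \big)$---now genuine loops on $\widetilde{\Sigma_{q'}}$---inside the total space of the fibration. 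The continuity of $\mathfrak{b}_{g+i}\big|_{q'}$ is then immediate from \eqref{3e2}: the coefficients $n_{ik}$ are constant, and the loops $c_{2k-1}\big|_{q'}$ are the already-chosen continuous lifts of the fixed boundary curves $\ell_{2k-1}$, so $\mathfrak{b}_{g+i}\big|_{q'} = c_{2i}\big|_{q'} - \sum_{k=i+1}^{d} n_{ik}\, c_{2k-1}\big|_{q'}$ varies continuously as well.

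The hard part is conceptual rather than computational: one must argue that the local ambiguity genuinely vanishes, i.e.\ that no monodromy can force a jump in the homology class of $c_{2i}\big|_{q'}$ as $q'$ moves within $\mathcal{V}$. This is exactly where the contractibility of $\mathcal{V}$ is used, together with its disjointness from the wall $\mathcal{B}_d$, on which the two simple zeros would collide and produce the branch-point coincidence responsible for the shift in \eqref{3e4}. Making the phrase ``same relative homotopy class'' precise---so that the chosen arcs vary continuously and avoid the second zero uniformly---is the only point requiring care, and it reduces to the standard fact that over a simply connected base a fibration of punctured discs admits a continuous trivialization respecting its marked points.
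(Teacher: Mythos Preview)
Your argument is correct and rests on the same idea as the paper's: once $\mathcal{V}$ is small enough that the two simple zeros in $\mathbb{D}_{2i-1}$ never collide, one can label them continuously and drag the arc $\ell_{2i}$ along. The paper, however, packages this more concretely than you do. Rather than invoking simple connectedness of $\mathcal{V}$ and a monodromy-free trivialization of a punctured-disc fibration, it simply isolates one of the two zeros inside an auxiliary sub-disc $\mathbb{W} \subset \mathbb{D}_{2i-1}$ (chosen so that each $q' \in \mathcal{V}$ has exactly one zero in $\mathbb{W}$), fixes $\ell_{2i}\big|_q$ to be a Euclidean segment inside $\mathbb{W}$ and inside $\mathbb{V}$, and then for general $q'$ redefines only those two segments to be straight lines to the moved zeros. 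Continuity of the resulting family of arcs, and hence of their lifts $c_{2i}\big|_{q'}$, is then immediate from the continuous dependence of the endpoints.

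The trade-off is that your topological phrasing explains \emph{why} the ambiguity in \eqref{3e4} is a genuine monodromy around the wall $\mathcal{B}_d$ and therefore absent over a contractible $\mathcal{V}$, which is conceptually illuminating; the paper's version bypasses this by exhibiting an explicit continuous section, so no appeal to homotopy classes or fibration trivializations is needed, and the phrase ``same relative homotopy class'' that you flag as the delicate point never has to be made precise.
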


\begin{proof}
Fix an index $1 \le i \le d$ and denote the two simple zeros of $q$ in the disc $\mathbb{D}_{2i-1}$ by $p_{2i-1}\big|_{q}$ and $p_{2i}\big|_{q}$. It suffices to construct a continuous deformation by selecting, for each $q' \in \mathcal{V}$, a curve from the collection $\left( \ell_{2i}\big|_{q'} \right)$ such that the family depends continuously on $q'$.

Take $\mathcal{V}$ sufficiently small such that there exists a small disc $\mathbb{W}$ in $\mathbb{D}_{2i-1}$ centered at $p_{2i-1}\big|_{q}$, ensuring that each $q' \in \mathcal{V}$ contains exactly one zero $p_{2i-1}\big|_{q'}$ in $\mathbb{W}$. Select a representative $\ell_{2i}\big|_{q} \in \left( \ell_{2i}\big|_{q} \right)$ as a Euclidean segment inside $\mathbb{W}$ and $\mathbb{V}$, and let $s \coloneqq \ell_{2i}\big|_{q} \cap \partial \mathbb{W}$ and $t \coloneqq \ell_{2i}\big|_{q} \cap \partial \mathbb{V}$.

For each $q' \in \mathcal{V}$, define $\ell_{2i}\big|_{q'}$ to coincide with $\ell_{2i}\big|_{q}$ on $C \setminus \left( \mathbb{W} \cup \mathbb{V} \right)$. Inside $\mathbb{W}$ and $\mathbb{V}$, define $\ell_{2i}\big|_{q'}$ as the Euclidean segments connecting $s$ to $p_{2i-1}\big|_{q'}$ and $t$ to $p_{2d+1}\big|_{q'}$, respectively. This construction yields a family $\left\{ \ell_{2i}\big|_{q'} \right\}_{q' \in \mathcal{V}}$ depending continuously on $q'$; their unique lifts to the spectral curves provide the required continuous family $\left\{ c_{2i}\big|_{q'} \right\}_{q' \in \mathcal{V}}$.
\end{proof}

The key insight of this lemma is that for $q' \in \mathcal{V}$, the two simple zeros of $q'$ inside $\mathbb{D}_{2i-1}$ are locally distinguishable. Specifically, the neighborhood $\mathbb{W} \subseteq \mathbb{D}_{2i-1}$ can be chosen to contain exactly one zero of $q'$, leaving the other outside. In contrast, as $q$ approaches the boundary where these zeros coalesce, such a separation is no longer possible.

\begin{cor}
\label{base choice}
For each $q' \in \mathcal{V}$, the collection $\left\{ \mathfrak{a}_i\big|_{q'}, \mathfrak{b}_i\big|_{q'} \right\}_{i=1}^{3g-3}$ determined by Lemma~\ref{continuous choice} forms a symplectic basis for $H_1\big( \Sigma_{q'}, \mathbb{R} \big)^-$, satisfying the requirements of Step~(2) in Construction~\ref{construction}.
\end{cor}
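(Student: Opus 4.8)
The plan is to verify, for each $q'\in\mathcal{V}$, the three defining properties of a symplectic basis of $H_1(\Sigma_{q'},\mathbb{R})^-$ — membership in the anti-invariant part, the symplectic intersection relations \eqref{3e3}, and the spanning property — and to observe that continuity in $q'$ has already been secured. First I would shrink $\mathcal{V}$ so that $\mathcal{V}\subseteq\mathcal{U}\cap\mathcal{B}^{\mathrm{reg}}$, which is legitimate since $\mathcal{B}^{\mathrm{reg}}$ is open; then every $\Sigma_{q'}$ with $q'\in\mathcal{V}$ is smooth, so $\widetilde{\Sigma_{q'}}=\Sigma_{q'}$ and all curves produced by Lemma~\ref{continuous choice} are honest closed loops. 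Indeed, the arcs $c_{2i}\big|_{q'}$ close up because both of their endpoints are now simple zeros, hence branch points of $\widetilde{\pi}$, while the loops $c_{2i-1}\big|_{q_0}$, contractible at $q_0$ by Corollary~\ref{contractible preimage}, become nontrivial vanishing cycles around the two simple zeros into which the double zero splits.

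The technical heart is anti-invariance, for which I would first establish the clean criterion $H_1(\Sigma_{q'},\mathbb{R})^- = \ker\bigl(\widetilde{\pi}_*\colon H_1(\Sigma_{q'},\mathbb{R})\to H_1(C,\mathbb{R})\bigr)$. One inclusion is formal: since $\widetilde{\pi}\circ\tau=\widetilde{\pi}$, any $\gamma\in H_1^-$ satisfies $\widetilde{\pi}_*\gamma=\widetilde{\pi}_*(\tau_*\gamma)=-\widetilde{\pi}_*\gamma$, whence $\widetilde{\pi}_*\gamma=0$. The reverse inclusion is a dimension count: the transfer map shows $\widetilde{\pi}_*$ is surjective onto the $2g$-dimensional space $H_1(C,\mathbb{R})$, while Lemma~\ref{Eigenspace} with $r_{\mathrm{odd}}=4g-4$ gives $\dim_{\mathbb{R}}H_1(\Sigma_{q'},\mathbb{R})=8g-6$, so $\dim\ker\widetilde{\pi}_*=6g-6=\dim_{\mathbb{R}}H_1(\Sigma_{q'},\mathbb{R})^-$. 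With this criterion, anti-invariance of each generator is immediate: one has $\widetilde{\pi}_*\bigl(a_i\big|_{q'}\bigr)=\widetilde{\pi}_*\bigl(a_{i+g}\big|_{q'}\bigr)=\alpha_i$, so $\widetilde{\pi}_*\mathfrak{a}_i\big|_{q'}=0$ (and likewise for $\mathfrak{b}_i\big|_{q'}$); each loop $c_{2i-1}\big|_{q'}$ and $c_k\big|_{q'}$ with $2d+1\le k\le 4g-6$ projects to $\partial\mathbb{D}_{2i-1}$ or $\partial\mathbb{D}_k$, which bounds a disc and is null-homologous in $C$; and each $c_{2i}\big|_{q'}$ projects to the arc $\ell_{2i}\big|_{q'}$ traversed once on each sheet, i.e.\ forward and then back, so again $\widetilde{\pi}_*c_{2i}\big|_{q'}=0$. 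As $H_1^-$ is a linear subspace, the combinations \eqref{3e2} and the first $g$ pairs all lie in $H_1(\Sigma_{q'},\mathbb{R})^-$.

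The remaining points are then short. The symplectic relations \eqref{3e3} were already verified for the deformed family in Subsection~\ref{step2 Bd}, and they force linear independence: pairing a putative relation $\sum_i\bigl(\mu_i\,\mathfrak{a}_i\big|_{q'}+\nu_i\,\mathfrak{b}_i\big|_{q'}\bigr)=0$ successively with $\mathfrak{b}_j\big|_{q'}$ and $\mathfrak{a}_j\big|_{q'}$ yields $\mu_j=\nu_j=0$. Having produced $6g-6$ linearly independent classes inside the $(6g-6)$-dimensional space $H_1(\Sigma_{q'},\mathbb{R})^-$, we conclude they form a basis, which is symplectic by \eqref{3e3}; continuity in $q'\in\mathcal{V}$ is precisely the conclusion of Lemma~\ref{continuous choice}.

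I expect the main obstacle to be the anti-invariance step — specifically, pinning down the identification $H_1^-=\ker\widetilde{\pi}_*$ and handling the arc-type cycles $c_{2i}\big|_{q'}$ correctly, since at $q_0$ these are not yet closed and acquire a well-defined homology class only after the deformation opens the node. Care with orientations, so that the pushforward computations genuinely vanish rather than merely reduce to a torsion class, is where the argument must be carried out precisely; once the kernel criterion is in place, however, the rest reduces to the bookkeeping already prepared in the preceding subsections.
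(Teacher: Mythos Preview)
Your argument is correct. The paper states Corollary~\ref{base choice} without proof, treating it as an immediate consequence of the surrounding constructions; you have supplied exactly the details the paper suppresses. Your route via the identification $H_1(\Sigma_{q'},\mathbb{R})^- = \ker\widetilde{\pi}_*$ is clean and efficient, though one could alternatively argue anti-invariance directly: for each $c_k\big|_{q'}$ with $k$ odd or $k\ge 2d+1$, the involution $\tau$ carries $c_k\big|_{q'}$ to the other component of $\widetilde{\pi}^{-1}(\ell_k)$, and since their union bounds the preimage of the disc $\mathbb{D}_k$, one has $c_k\big|_{q'} + \tau_*c_k\big|_{q'} = 0$; for $c_{2i}\big|_{q'}$, the involution visibly reverses orientation of the loop. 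Either approach works. Your handling of the pushforward of the arc-type cycles $c_{2i}\big|_{q'}$ is the delicate point, and you have it right: the image is $\ell_{2i}\big|_{q'}$ followed by its reverse, hence null-homologous. The remaining steps (symplectic relations from continuity, independence from nondegeneracy of the pairing, dimension count via Lemma~\ref{Eigenspace}) are routine and correctly executed.
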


\subsection{Singular Model of the Special K\"ahler Metric Near $\mathcal{B}_d$}
\label{steps3-4 Bd}

In this subsection, we carry out Steps~(3) and~(4) of Construction~\ref{construction}. First, we construct $3g-3$ pairs of functions $\left\{ \mathfrak{z}^i, \mathfrak{w}_i \right\}_{i=1}^{3g-3}$ on $\mathcal{U}$ and identify $\mathcal{B}_{d} \cap \mathcal{U}$ as the common zero locus of $\left\{ \mathfrak{z}^{g+i} \right\}_{i=1}^{d}$ in Proposition~\ref{zero locus Bd}. Next, we derive the coordinate expression for $\omega_{\mathrm{SK}}$ and analyze the asymptotics of its coefficients near $q_0$. Theorem~\ref{main Bd} provides a local model for $\omega_{\mathrm{SK}}$ on $\mathcal{U}$, while Corollary~\ref{metric Bd} shows that the limit of $\omega_{\mathrm{SK}}$, restricted to directions tangent to $\mathcal{B}_d$, coincides with $\omega_{\mathrm{SK},\mathcal{B}_d}$. Corollary~\ref{extension Bd} proves that the K\"ahler potential of $\omega_{\mathrm{SK}}$ admits a $C^1$-extension to $\mathcal{B}_d$, yielding the K\"ahler potential of $\omega_{\mathrm{SK},\mathcal{B}_d}$.

Recall $q_0 \in \mathcal{B}_d$ and let $\mathcal{U} \subseteq \mathcal{B}$ be its neighborhood. Define $3g-3$ pairs of functions on $\mathcal{U}$ by
\begin{equation}
\label{3e5}
\mathfrak{z}^i(q) \coloneqq \int_{\mathfrak{a}_i|_{q}} \theta\big|_{\widetilde{\Sigma_{q}}}, \quad \mathfrak{w}_i(q) \coloneqq - \int_{\mathfrak{b}_i|_{q}} \theta\big|_{\widetilde{\Sigma_{q}}}, \qquad 1 \leq i \leq 3g-3.
\end{equation}
The functions $\big\{\mathfrak{w}_i\big\}_{i=g+1}^{g+d}$ are multi-valued analytic on $\mathcal{U} \cap \mathcal{B}^{\mathrm{reg}}$ since the homology classes $\mathfrak{b}_i\big|_{q}$ are not uniquely determined, while the remaining functions are single-valued and holomorphic. According to \eqref{3e4}, the branches of each $\mathfrak{w}_i$ for $g+1 \le i \le g+d$ are related by
\begin{equation}
\label{3e6}
\mathfrak{w}_i(q) = \mathfrak{w}'_i(q) + n_i \cdot \mathfrak{z}^i(q) \quad \text{for some } n_i \in \mathbb{Z}.
\end{equation}

\begin{lemma}
\label{vanishing condition}
Let $1 \le k \le d$ and $\dot{q} \in T_{q_0} \mathcal{B} \cong H^0\left( C, K_C^2 \right)$. The derivative $\mathrm{d}\mathfrak{z}^{g+k}(\dot{q})$ vanishes if and only if the quadratic differential $\dot{q}$ vanishes at $p_{2k-1}\big|_{q_0}$.
\end{lemma}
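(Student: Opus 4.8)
The plan is to reduce $\mathrm{d}\mathfrak{z}^{g+i}(\dot q)$ to a residue on the fixed normalized spectral curve $\widetilde{\Sigma_{q_0}}$, exploiting the explicit isomorphism \eqref{2e6}. The starting observation is that, by the construction in Subsection~\ref{step2 Bd} (item~(1) there), the disc $\mathbb{D}_{2i-1}$ and its boundary $\ell_{2i-1}=\partial\mathbb{D}_{2i-1}$ do \emph{not} move as $q$ varies in $\mathcal{U}$; the contour $\mathfrak{a}_{g+i}\big|_q=c_{2i-1}\big|_q$ is simply the lift of the fixed loop $\ell_{2i-1}$ to a continuously chosen sheet of $\widetilde{\pi}\colon\widetilde{\Sigma_q}\to C$. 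On that sheet $\theta\big|_{\widetilde{\Sigma_q}}=x\,\mathrm{d}z$ with $x=\sqrt{-q(z)}$, so projecting to $C$ I may write
\[
\mathfrak{z}^{g+i}(q)=\int_{\ell_{2i-1}}\sqrt{-q(z)}\,\mathrm{d}z .
\]
Since each nearby $q$ has an even number of zeros inside $\mathbb{D}_{2i-1}$ and none on $\ell_{2i-1}$, the branch $\sqrt{-q(z)}$ is single-valued and holomorphic along $\ell_{2i-1}$ and depends analytically on $q$, so differentiation under the integral sign is legitimate and no moving-contour correction arises.

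Writing $\dot q=f(z)\,\mathrm{d}z^2$, differentiating at $q_0$ gives
\[
\mathrm{d}\mathfrak{z}^{g+i}(\dot q)=\int_{\ell_{2i-1}}\frac{-\dot q(z)}{2\sqrt{-q_0(z)}}\,\mathrm{d}z=-\int_{c_{2i-1}|_{q_0}}\frac{f(z)}{2x}\,\mathrm{d}z=-\int_{c_{2i-1}|_{q_0}}\widehat{\dot q},
\]
where $\widehat{\dot q}$ denotes the image of $\dot q$ under the isomorphism \eqref{2e6} of Corollary~\ref{H^0 isomorphism}. This is the conceptual heart of the argument: the derivative of the period equals the integral of the meromorphic differential canonically associated with $\dot q$.

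Next I would evaluate by residues. Because $\operatorname{div}(q_0)$ is given by \eqref{3e1}, one computes $D_{q_0}=\sum_{i=1}^d p_{2i-1}\big|_{q_0}$, so $\widehat{\dot q}\in H^0\bigl(\widetilde{\Sigma_{q_0}},K_{\widetilde{\Sigma_{q_0}}}(\widetilde{D_{q_0}})\bigr)^-$ has at worst simple poles at the two points of $\widetilde{\pi}^{-1}(p_{2i-1}\big|_{q_0})$. By Corollary~\ref{contractible preimage} the loop $c_{2i-1}\big|_{q_0}$ encircles exactly one of these, say $\widetilde p$, and no other pole of $\widehat{\dot q}$, so $\int_{c_{2i-1}|_{q_0}}\widehat{\dot q}=\pm2\pi\mathrm{i}\,\operatorname{Res}_{\widetilde p}\widehat{\dot q}$. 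Since $p_{2i-1}\big|_{q_0}$ is an even (hence unramified) zero, $z$ is a local coordinate on $\widetilde{\Sigma_{q_0}}$ near $\widetilde p$; writing $q_0=(a_2z^2+\cdots)\,\mathrm{d}z^2$ with $a_2\neq0$ gives $x=\sqrt{-a_2}\,z\,(1+O(z))$, whence $\operatorname{Res}_{\widetilde p}\widehat{\dot q}=f(0)/(2\sqrt{-a_2})$, a nonzero constant multiple of $f(0)=\dot q(p_{2i-1}\big|_{q_0})$. Therefore $\mathrm{d}\mathfrak{z}^{g+i}(\dot q)=0$ precisely when $\dot q$ vanishes at $p_{2i-1}\big|_{q_0}$, which is the claim.

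The main obstacle I anticipate is justifying the passage to the fixed projected contour, i.e.\ ensuring the normal-variation of the moving cycle contributes nothing. Here this is automatic, because $\ell_{2i-1}$ is genuinely constant in $q$ and disjoint from $\operatorname{Zero}(q)$, so $\theta\big|_{\widetilde{\Sigma_q}}$ restricts to an honest family of holomorphic integrands on one fixed loop; were the projected contour itself to move, one would instead invoke the Lagrangian identity $\omega_{\mathrm{can}}\big|_{\Sigma_q}=0$ used in establishing \eqref{2e4} to discard that term. A secondary but routine point is the bookkeeping of pole order and orientation, so that the residue is correctly identified as a nonzero multiple of $f(0)$ and not mistaken for a higher-order contribution.
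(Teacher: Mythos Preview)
Your proof is correct and follows essentially the same approach as the paper: both differentiate the period integral over the fixed loop $\ell_{2i-1}$ and then extract a residue at the double zero $p_{2i-1}\big|_{q_0}$. The only difference is cosmetic---the paper Taylor-expands $\sqrt{z^2+tf(z)}$ directly to obtain $\pi\mathrm{i}\,f(0)$, whereas you identify the first-order term with the image $\widehat{\dot q}$ under the isomorphism~\eqref{2e6} before taking the residue; both computations are the same in substance.
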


\begin{proof}
Let $q(t) \coloneqq q_0 + t \dot{q}$ be a curve in $\mathcal{B}$. Recall that $U \coloneqq\mathbb{D}_{2k-1}$ is a small disk in $C$ centered at the double zero $p_{2k-1}\big|_{q_0}$ of $q_0$. Take a local coordinate $\big(U, z \big)$ such that $q_0 = z^2 \mathrm{d}z^2$ and $\dot{q} = f(z) \mathrm{d}z^2$ on $U$. Then:
\begin{align*}
\mathfrak{z}^{g+k}\left( q(t) \right) &= \int_{\mathfrak{a}_{g+k}|_{q(t)}} \theta\big|_{\widetilde{\Sigma_{q(t)}}} = \int_{\partial U} \sqrt{\big.z^2 + t f(z)} \, \mathrm{d}z = \int_{\partial U} z \sqrt{1 + t \frac{f(z)}{z^2}} \, \mathrm{d}z \\
&= \int_{\partial U} \left( z + \frac{t}{2} \frac{f(z)}{z} - \frac{t^2}{8} \frac{f(z)^2}{z^3} + \dots \right) \mathrm{d}z \\
&= \pi \mathrm{i} f(0) t + O\left( t^2 \right) \quad \text{as } t \to 0.
\end{align*}
Thus, $\mathrm{d}\mathfrak{z}^{g+k}(\dot{q}) = \pi \mathrm{i} f(0)$, which vanishes if and only if $f(0) = 0$.
\end{proof}

In the following proposition and its corollary, we discuss the smoothness of $\mathcal{B}_d$ as a subvariety of $\mathcal{B}$ and characterize its tangent space $T_{q_0}\mathcal{B}_d$.

\begin{prop}
\label{zero locus Bd}
For a sufficiently small neighborhood $\mathcal{U}$, the intersection $\mathcal{B}_d \cap \mathcal{U}$ is the common zero locus of $\big\{\mathfrak{z}^{g+k}\big\}_{k=1}^d$. Consequently, $\mathcal{B}_d \cap \mathcal{U}$ is a complex submanifold of dimension $3g-3-d$.
\end{prop}

\begin{proof}
Recall that $U \coloneqq \mathbb{D}_{2k-1}$ is a small disk in $C$ centered at the double zero $p_{2k-1}\big|_{q_0}$ of $q_0$ for $1 \le k \le d$, and $\mathfrak{a}_{g+k}\big|_q$ is the preimage of $\partial U$ on $\widetilde{\Sigma_q}$. By definition,
\[
\big|\,\mathfrak{z}^{g+k}(q)\big| = \left|\, \int_{\mathfrak{a}_{g+k}|_q} \theta\big|_{\widetilde{\Sigma_{q}}} \right| = \left|\, \int_{\partial U} \sqrt{q} \,\right|.
\]
By Lemma~\ref{corr estimate}, $q \in \mathcal{U}$ has a double zero in $\mathbb{D}_{2k-1}$ if and only if $|\,\mathfrak{z}^{g+k}(q)| = 0$. This implies
\[
\mathcal{B}_d \cap \mathcal{U} = \Big\{ \mathfrak{z}^{g+k} = 0 \mid k = 1, \dots, d \Big\}.
\]
To establish the complex manifold structure, define the holomorphic map
\[
F \coloneqq \big(\mathfrak{z}^{g+1}, \dots, \mathfrak{z}^{g+d}\big) \colon \mathcal{U} \to \mathbb{C}^d,
\]
where $F^{-1}(0) = \mathcal{B}_d \cap \mathcal{U}$. It suffices to verify that $\mathrm{d}F$ has full rank at each point in $\mathcal{B}_d \cap \mathcal{U}$, which is equivalent to the non-vanishing of $\bigwedge_{i=1}^d \mathrm{d}\mathfrak{z}^{g+i}$. We now verify this condition at $q_0 \in \mathcal{B}_d$.

For each $1 \le k \le d$, there exists $\dot{q}_k \in T_{q_0} \mathcal{B} \cong H^0\left( C, K_C^2 \right)$ vanishing at all $p_{2j-1}\big|_{q_0}$ for $j \neq k$ but not at $p_{2k-1}\big|_{q_0}$. This follows from the dimension comparison:
\[
\dim H^0\left( C, K_C^2\left( - \sum_{\substack{j=1 \\ j \neq k}}^d p_{2j-1}\big|_{q_0} \right) \right) = \dim H^0\left( C, K_C^2\left( - \sum_{j=1}^d p_{2j-1}\big|_{q_0} \right) \right) + 1.
\]
By Lemma~\ref{vanishing condition}, $\mathrm{d}\mathfrak{z}^{g+j}(\dot{q}_k) = \delta_{jk}$ after suitable rescaling. Consequently, for the $d$-tuple $v \coloneqq \left( \dot{q}_1, \dots, \dot{q}_d \right)$, we have $\left( \bigwedge_{j=1}^d \mathrm{d}\mathfrak{z}^{g+j} \right) (v) = 1$. Hence $\bigwedge_{i=1}^d \mathrm{d}\mathfrak{z}^{g+i}$ does not vanish at $q_0$.
\end{proof}

\begin{cor}
\label{tangent space}
Let $q_0 \in \mathcal{B}_d$ with $\operatorname{div}(q_0) = \sum\limits_{k=1}^d 2 \cdot p_{2k-1}\big|_{q_0} + \sum\limits_{k=2d+1}^{4g-4} p_{k}\big|_{q_0}$ as in \eqref{3e1}. Then the tangent space $T_{q_0}\mathcal{B}_d$ is isomorphic to $H^0\big( C, K_C^2(-D_{q_0}) \big)$, where $D_{q_0} = \sum\limits_{k=1}^d p_{2k-1}\big|_{q_0}$.
\end{cor}

\begin{proof}
A tangent vector $\dot q \in T_{q_0}\mathcal{B}$ is tangent to $\mathcal{B}_d$ if and only if $\mathrm{d}\mathfrak{z}^{g+k}(\dot q)=0$ for $1 \le k \le d$. By Lemma~\ref{vanishing condition}, this is precisely the space $H^0\bigl(C, K_C^2(-D_{q_0})\bigr)$.
\end{proof}

\begin{remark}
Hitchin \cite{hitchin2021integrable} observed that $\mathcal{B}_d$ is a subvariety of $\mathcal{B}$ with constant tangent space dimension, and is therefore a smooth complex manifold. He--Horn--Li \cite[Lemma 2.2]{he2025asymptotics} further verified the smoothness of $\mathcal{B}_d$ and characterized its tangent space. Our treatment employs an elementary approach, providing a constructive description by identifying local coordinate functions such that $\mathcal{B}_d$ arises as their common zero locus.
\end{remark}

The stratum $\mathcal{B}_{d}$ carries a special K\"ahler metric $\omega_{\mathrm{SK},\mathcal{B}_d}$ induced by the integrable subsystem studied in \cite{hitchin2019critical}. Let $B \coloneqq \big\{ 1, \dots, g \big\} \cup \big\{ g+d+1, \dots, 3g-3 \big\}$, then $\big\{ \mathfrak{z}^i, \mathfrak{w}_i \big\}_{i \in B}$ form a conjugate special coordinate system on $\mathcal{B}_{d} \cap \mathcal{U}$, as the corresponding integration contours constitute a symplectic basis of $H_1\left(\widetilde{\Sigma_{q}}, \mathbb{R}\right)^-$ (cf. \cite[Lemma 6.1]{he2025asymptotics}). The metric is given by
\begin{equation}
\label{3e12}
\omega_{\mathrm{SK}, \mathcal{B}_d} = \frac{\mathrm{i}}{2} \sum_{i,j \in B} 
\Im \left( \frac{\partial \mathfrak{w}_j}{\partial \mathfrak{z}^i} \right) 
\mathrm{d} \mathfrak{z}^i \wedge \mathrm{d} \bar{\mathfrak{z}}^j.
\end{equation}

Next, we compute the special K\"ahler metric on $\mathcal{U} \cap \mathcal{B}^{\mathrm{reg}}$ using the functions in \eqref{3e5}. Although $\big\{\mathfrak{w}_{g+i}\big\}_{i=1}^d$ are multi-valued, their monodromy is governed by \eqref{3e6}. Since the integration contours form a symplectic basis of $H_1(\widetilde{\Sigma_{q}}, \mathbb{R})^-$ for $q \in \mathcal{U} \cap \mathcal{B}^{\mathrm{reg}}$, by Corollary~\ref{base choice} we have
\[
\omega_{\mathrm{SK}} = - \sum_{i=1}^{3g-3} \Re ( \mathrm{d} \mathfrak{z}^i ) \wedge \Re ( \mathrm{d} \mathfrak{w}_i ) = \frac{\mathrm{i}}{2} \sum_{i,j=1}^{3g-3} \Im \left( \frac{\partial \mathfrak{w}_j}{\partial \mathfrak{z}^i} \right) \mathrm{d} \mathfrak{z}^i \wedge \mathrm{d} \bar{\mathfrak{z}}^j.
\]

\begin{lemma}
\label{dual 1forms Bd}
For each $q \in \mathcal{U}$, there exists a unique basis $\left\{ \omega_i\big|_{q} \right\}_{i=1}^{3g-3}$ of $H^0\left( \widetilde{\Sigma_{q}}, K_{\widetilde{\Sigma_{q}}}\left( \widetilde{D_{q}} \right) \right)^-$ satisfying
\[
\int_{\mathfrak{a}_i|_{q}} \omega_j\big|_{q} = \delta_{ij}, \qquad 1 \leq i,j \leq 3g-3.
\]
\end{lemma}

\begin{proof}
It suffices to construct the basis at $q_0$, as the construction for other $q \in \mathcal{U}$ is analogous. Let $B \coloneqq \big\{ 1, \dots, g \big\} \cup \big\{ g+d+1, \dots, 3g-3 \big\}$. The set $\left\{ \mathfrak{a}_i\big|_{q_0}, \mathfrak{b}_i\big|_{q_0} \right\}_{i \in B}$ forms a basis for $H_1\left( \widetilde{\Sigma_{q_0}}, \mathbb{R} \right)^-$. Since each $\mathfrak{a}_{g+k}\big|_{q_0}$ for $k \in \{1, \dots, d\}$ is contractible in $\widetilde{\Sigma_{q_0}}$ by Corollary~\ref{contractible preimage}, the existence and uniqueness of the basis follow from Lemma~\ref{dual 1forms}.
\end{proof}

\begin{cor}
\label{residues}
Using the notation in Lemma~\ref{dual 1forms Bd}, let $\widetilde{p}_{2k-1,1}\big|_{q_0}$ and $\widetilde{p}_{2k-1,2}\big|_{q_0}$ be the preimages of $p_{2k-1}\big|_{q_0}$ on $\widetilde{\Sigma_{q_0}}$ for $1 \le k \le d$. The $1$-form $\omega_{g+k}\big|_{q_0}$ has simple poles at $\widetilde{p}_{2k-1,1}\big|_{q_0}$ and $\widetilde{p}_{2k-1,2}\big|_{q_0}$ with residues $\pm \frac{1}{2\pi \mathrm{i}}$, respectively, and is holomorphic elsewhere. Therefore, the $1$-form $\omega_i\big|_{q_0}$ is holomorphic for $i \in B$.
\end{cor}

The main theorem is stated as follows:

\begin{theorem}
\label{main Bd}
Let $\mathcal{B} = H^0(C, K_C^2)$ be the base of the $\mathrm{SL}_2(\mathbb{C})$-Hitchin system over a compact Riemann surface $C$ of genus $g \ge 2$, and let $\omega_{\mathrm{SK}}$ be the special K\"ahler metric on $\mathcal{B}^{\mathrm{reg}}$. For $q_0 \in \mathcal{B}_d$ ($0 \le d \le 2g-3$) with divisor 
\[
\operatorname{div}(q_0) = \sum_{k=1}^d 2\cdot p_{2k-1}\big|_{q_0} + \sum_{k=2d+1}^{4g-4} p_{2k-1}\big|_{q_0},
\]
there exist a simply connected neighborhood $\mathcal{U} \subseteq \mathcal{B}$ of $q_0$ and functions $\left\{ \mathfrak{z}^i, \mathfrak{w}_i \right\}_{i=1}^{3g-3}$ on $\mathcal{U}$ such that $\omega_{\mathrm{SK}}$ is expressed as
\begin{equation}
\label{3e7}
\omega_{\mathrm{SK}} = \frac{\mathrm{i}}{2} \sum_{i,j=1}^{3g-3} \Im\bigl( \tau_{ij} \bigr) \, \mathrm{d} \mathfrak{z}^i \wedge \mathrm{d} \bar{\mathfrak{z}}^j, \quad 
\tau_{ij}(q) \coloneqq \frac{\partial \mathfrak{w}_j}{\partial \mathfrak{z}^i}(q) = \int_{\mathfrak{b}_j|_{q}} \omega_i\big|_{q},
\end{equation}
where $\omega_i\big|_{q}$ is defined in Lemma~\ref{dual 1forms Bd}. As $q \to q_0$, the asymptotic behavior of $\Im(\tau_{ij})$ is given by
\[
\begin{aligned}
&\Im\bigl( \tau_{ij}(q) \bigr) \text{ is bounded }, \quad \text{for } (i, j) \notin \Big\{(g+k, g+k) \mid k = 1, \dots, d\Big\}, \\
&\Im\bigl( \tau_{g+k,g+k}(q) \bigr) \sim -\log \bigl| \mathfrak{z}^{g+k}(q) \bigr|, \quad \text{for } k = 1, \dots, d,
\end{aligned}
\]
where $A \sim B$ means $C_1 \le B/A \le C_2$ for some fixed positive constants $C_1, C_2$.
\end{theorem}

\begin{proof}
The functions $\big\{\mathfrak{z}^i, \mathfrak{w}_i\big\}_{i=1}^{3g-3}$ are defined on $\mathcal{U}$ as in \eqref{3e5}. Focusing on the fixed point $q_0$, each $1$-form $\omega_i\big|_{q_0}$ is either holomorphic or meromorphic with at most simple poles by lemma~\ref{dual 1forms Bd}. The integration contours $\mathfrak{b}_j\big|_{q_0}$ are chosen to be disjoint from the poles of $\omega_i\big|_{q_0}$ for all $1 \le i,j \le 3g-3$, except when $i = j \in \big\{ g + k \mid k = 1, \dots, d \big\}$. Consequently, $\Im ( \tau_{ij}(q_0) )$ is finite for all $(i, j) \notin \big\{ (g + k, g + k) \mid k = 1, \dots, d \big\}$ and $\Im ( \tau_{ij}(q) )$ extends continuously to $q_0$.

We next consider the remaining case: the behavior of $\tau_{g+k, g+k}(q)$ as $q \to q_0$ for $k \in \big\{ 1, \dots, d \big\}$. In this case, recall that $p_{2k-1}\big|_{q_0}$ is a double zero of $q_0$. The preimages of $p_{2k-1}\big|_{q_0}$ on $\widetilde{\Sigma_{q_0}}$, denoted by $\widetilde{p}_{2k-1,1}\big|_{q_0}$ and $\widetilde{p}_{2k-1,2}\big|_{q_0}$, are simple poles of $\omega_{g+k}\big|_{q_0}$. The integral defining $\tau_{g+k, g+k}(q_0)$ diverges because the endpoints of the integration contour $\mathfrak{b}_{g+k}\big|_{q_0}$ coincide with these poles.

For each $q \in \mathcal{U}$, let $\left\{ Q_i\big|_{q} \right\}_{i=1}^{3g-3}$ be the basis of $H^0\left( C, K_C^2 \right)$ that corresponds, via the isomorphism \eqref{2e6}, to the basis $\left\{ \omega_i\big|_{q} \right\}_{i=1}^{3g-3}$ defined in Lemma~\ref{dual 1forms Bd}. Choose a local chart $(U \coloneqq \mathbb{D}_{2k-1}, z)$ on $C$ centered at $p_{2k-1}\big|_{q_0}$ such that
\[
q_0 = z^2 \, \mathrm{d}z^2, \quad q = (z - \varepsilon_1)(z - \varepsilon_2) g\big|_{q}(z) \, \mathrm{d}z^2, \quad Q_{g+k}\big|_{q} = f_{g+k}\big|_{q}(z) \, \mathrm{d}z^2,
\]
where $g\big|_{q}(z)$ and $f_{g+k}\big|_{q}(z)$ are holomorphic functions on $U$ that depend continuously on $q$.

\begin{claim}
\label{log estimate}
Shrinking $\mathcal{U}$ if necessary, there exist positive constants $C_1, C_2$ independent of $q$, such that
\[
\begin{aligned}
- C_1 \log \big|\varepsilon_1 - \varepsilon_2\big| &< \Im \Bigl( \tau_{g+k,g+k}(q) \Bigr) < - C_2 \log \big|\varepsilon_1 - \varepsilon_2\big|, \\
- C_1 \log \big|\varepsilon_1 - \varepsilon_2\big| &< \Big| \tau_{g+k,g+k}(q) \Big| < - C_2 \log \big|\varepsilon_1 - \varepsilon_2\big|.
\end{aligned}
\]
\end{claim}

\begin{proof}[Proof of the Claim]
By \eqref{2e6}, the $1$-form $\omega_{g+k}\big|_{q_0}$ is locally expressed as $\pm (2z)^{-1} f_{g+k}\big|_{q_0} \mathrm{d}z \big|_{\widetilde{\Sigma_{q_0}}}$ near $\widetilde{p}_{2k-1, 1}$ and $\widetilde{p}_{2k-1, 2}$. Its residue, determined via Corollary~\ref{residues}, yields $f_{g+k}\big|_{q_0}(0) = \frac{1}{\pi\mathrm{i}}$. By continuity, it follows that
\begin{equation}
\label{3e8.1}
\lim_{(q, z) \to (q_0, 0)} g\big|_q(z) = 1, \quad 
\lim_{(q, z) \to (q_0, 0)} f_{g+k}\big|_q(z) = \frac{1}{\pi\mathrm{i}}.
\end{equation}
Let $\widetilde{U}$ be the preimage of $U$ on $\widetilde{\Sigma}_{q}$. For the integral defining $\tau_{g+k, g+k}(q)$, the contribution from $\mathfrak{b}_{g+k}\big|_{q} \setminus \widetilde{U}$ is uniformly bounded for $q \in \mathcal{U}$. Consequently, we only need to focus on the dominant contribution to $\tau_{g+k, g+k}$, which arises from $\mathfrak{b}_{g+k}\big|_{q} \cap \widetilde{U}$.

Observe that $\mathfrak{b}_{g+k}\big|_{q} \cap \widetilde{U}$ is the lift of a path $\ell_{2k}\big|_{q} \cap U$ starting at a zero of $q$. By the construction in Subsection~\ref{step2 Bd}, we may take $\ell_{2k}\big|_{q} \cap U$ to be a radial segment $z = \varepsilon_1 + t e^{\mathrm{i}\theta_0}$ for $t \in [0, r]$, where $r e^{\mathrm{i}\theta_0}$ is chosen such that $\varepsilon_2$ is avoided. The dominant contribution to $\tau_{g+k, g+k}(q)$ is
\begin{align*}
\int_{\mathfrak{b}_{g+k}|_{q} \cap \widetilde{U}} \frac{f\big|_{q}(z) \, \mathrm{d}z}{2\sqrt{(z-\varepsilon_1)(z-\varepsilon_2) g\big|_{q}(z)}} 
&= \int_{\ell_{2k}|_{q} \cap U} \frac{f\big|_{q}(z) \, \mathrm{d}z}{\sqrt{(z-\varepsilon_1)(z-\varepsilon_2) g\big|_{q}(z)}} \\
&= -\int_{\varepsilon_1}^{\varepsilon_1 + r e^{\mathrm{i}\theta_0}} \frac{f\big|_{q}(z) \, \mathrm{d}z}{\sqrt{(z-\varepsilon_1)(z-\varepsilon_2) g\big|_{q}(z)}},
\end{align*}
where the factor $2$ accounts for the double covering. We rewrite this integral into $\mathbf{I}+\mathbf{II}$, where
\[
\mathbf{I} \coloneqq -\frac{f\big|_{q}(\varepsilon_2)}{\sqrt{g\big|_{q}(\varepsilon_2)}} \int_{\varepsilon_1}^{\varepsilon_1 + r e^{\mathrm{i}\theta_0}} \frac{\mathrm{d}z}{\sqrt{(z-\varepsilon_1)(z-\varepsilon_2)}}, \quad \mathbf{II} \coloneqq -\int_{\varepsilon_1}^{\varepsilon_1 + r e^{\mathrm{i}\theta_0}} \frac{\frac{f\big|_{q}(z)}{\sqrt{g\big|_{q}(z)}}-\frac{f\big|_{q}(\varepsilon_2)}{\sqrt{g\big|_{q}(\varepsilon_2)}}}{\sqrt{(z-\varepsilon_1)(z-\varepsilon_2)}} \mathrm{d}z.
\]
Since $f\big|_{q}(z)/\sqrt{g\big|_{q}(z)}$ is holomorphic, there exists $M>0$, independent of $q \in \mathcal{U}$, such that
\[
\left|\frac{f\big|_{q}(z)}{\sqrt{g\big|_{q}(z)}}-\frac{f\big|_{q}(\varepsilon_2)}{\sqrt{g\big|_{q}(\varepsilon_2)}}\right| \le M|z-\varepsilon_2|\quad\text{for } z\in U, q\in\mathcal{U}.
\]
Hence, the integral part \textbf{II} remains uniformly bounded:
\begin{align*}
&\left|\int_{\varepsilon_1}^{\varepsilon_1 + r e^{\mathrm{i}\theta_0}} \frac{\mathrm{d}z}{\sqrt{\big.(z-\varepsilon_1)(z-\varepsilon_2)}} 
\left(\frac{f\big|_{q}(z)}{\sqrt{g\big|_{q}(z)}}-\frac{f\big|_{q}(\varepsilon_2)}{\sqrt{g\big|_{q}(\varepsilon_2)}}\right)\right| \le M\int_{\varepsilon_1}^{\varepsilon_1 + r e^{\mathrm{i}\theta_0}} \left|\frac{z-\varepsilon_2}{\sqrt{\big.(z-\varepsilon_1)(z-\varepsilon_2)}}\right| \, \mathrm{d}z \\
&\qquad = M\int_0^r \left|\sqrt{\frac{\big.t + (\varepsilon_1 - \varepsilon_2)e^{-\mathrm{i}\theta_0}}{t}}\right| \, \mathrm{d}t < M\int_0^1 t^{-1/2} \, \mathrm{d}t = 2M
\end{align*}
provided $r$ and $|\varepsilon_1 - \varepsilon_2|$ are sufficiently small. Therefore, we only need to consider the integral part \textbf{I}. Note that
\[
\int_{\varepsilon_1}^{\varepsilon_1 + r e^{\mathrm{i}\theta_0}} \frac{\mathrm{d}z}{\sqrt{(z-\varepsilon_1)(z-\varepsilon_2)}} = \int_0^r \frac{\mathrm{d}t}{\sqrt{t\left(t + (\varepsilon_1 - \varepsilon_2)e^{-\mathrm{i}\theta_0}\right)}} = F(r) - F(0),
\]
where $F(w) \coloneqq 2\log\left(\sqrt{\big.w} + \sqrt{\big.w + (\varepsilon_1 - \varepsilon_2)e^{-\mathrm{i}\theta_0}}\right)$.

In order to evaluate $F(r) - F(0)$, it suffices to select a branch of $F$ at $w = r$ such that $\Im(F(r)) \to 0$ as $\varepsilon_1, \varepsilon_2 \to 0$, and analytically continue it to $w=0$. Under the constraints $\varepsilon_1 \neq \varepsilon_2$ and $-(\varepsilon_1 - \varepsilon_2)e^{-\mathrm{i}\theta_0} \notin [0, r]$, we have
\[
\limsup_{\mathbb{R}^+ \ni t \to 0} \Im(F(t)) = \operatorname{Arg}\left((\varepsilon_1 - \varepsilon_2)e^{-\mathrm{i}\theta_0}\right) \leq \pi,
\]
which implies 
\begin{equation}
\label{3e8.2}
\Re\Big(F(r)-F(0)\Big) \sim -\log\big|\varepsilon_1 - \varepsilon_2\big|, \quad \Im\Big(F(r) - F(0)\Big) \sim O(1),\quad \text{as }\varepsilon_1, \varepsilon_2 \to 0.
\end{equation}
Hence, the divergence rates of both $\big|\tau_{g+k,g+k}(q)\big|$ and $\Im\big(\tau_{g+k,g+k}(q)\big)$ coincide with that of the integral part $\mathbf{I}$ as $\mathcal{B}^{\mathrm{reg}} \ni q \to q_0$. Combining \eqref{3e8.1} and \eqref{3e8.2}, we find that the asymptotic behavior of $\mathbf{I}$ is bounded by constant multiples of $-\log|\varepsilon_1 - \varepsilon_2|$.
\end{proof}

By Lemma~\ref{corr estimate}, there exist positive constants $C_3, C_4$, independent of $q \in \mathcal{U}$, such that
\[
C_3\big|\varepsilon_1-\varepsilon_2\big|^2 \le \big|\mathfrak{z}^{g+k}(q)\big| \le C_4\big|\varepsilon_1-\varepsilon_2\big|^2.
\]
Hence, we obtain the final asymptotic behavior as $\mathcal{B}^{\mathrm{reg}} \ni q \to q_0$:
\begin{equation}
\label{3e15}
\bigl| \tau_{g+k,g+k}(q) \bigr| \sim \Im\bigl( \tau_{g+k,g+k}(q) \bigr) \sim -\log \bigl| \mathfrak{z}^{g+k}(q) \bigr|, \qquad 1 \le k \le d.
\end{equation}
\end{proof}

Recall that $\omega_{\mathrm{SK},\mathcal{B}_{d}}$ is the special Kähler metric on $\mathcal{B}_d$, as expressed in \eqref{3e12}. Since $\mathcal{B}$ is naturally an affine space, we can restrict any $\omega_{\mathrm{SK}}(q)$ to the fixed subspace $T_{q_0}\mathcal{B}_{d}$. Furthermore, noting that $\mathcal{B}_{d}$ is the common zero locus of $\big\{\mathfrak{z}^{g+k}\big\}_{k=1}^d$ by Proposition~\ref{zero locus Bd}, we obtain the limit
\begin{equation}
\label{3e11}
\lim_{\mathcal{B}^{\mathrm{reg}}\ni q \to q_0} \left( \omega_{\mathrm{SK}}(q)\big|_{T_{q_0}\mathcal{B}_{d}} \right) = \frac{\mathrm{i}}{2} \sum_{i,j \in B} \Im(\tau_{ij}) \, \mathrm{d}\mathfrak{z}^i \wedge \mathrm{d}\bar{\mathfrak{z}}^{j},
\end{equation}
where $B \coloneqq \big\{1, \dots, g\big\} \cup \big\{g+d+1, \dots, 3g-3\big\}$ and $\dim \mathcal{B}_{d} = |B| = 3g-3-d$. Since the right-hand sides of \eqref{3e12} and \eqref{3e11} are identical, we arrive at the following result:

\begin{cor}
\label{metric Bd}
For any $q_0 \in \mathcal{B}_{d}$, the special Kähler metric on the stratum satisfies
\begin{equation}
\omega_{\mathrm{SK},\mathcal{B}_{d}}(q_0) = \lim_{\mathcal{B}^{\mathrm{reg}} \ni q \to q_0} \left( \omega_{\mathrm{SK}}(q)\big|_{T_{q_0}\mathcal{B}_{d}} \right).
\end{equation}
\end{cor}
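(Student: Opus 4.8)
The plan is to deduce the corollary by matching two expressions already available: the limit formula \eqref{3e11} for the ambient metric restricted to $T_{q_0}\mathcal{B}_d$, and the intrinsic formula \eqref{3e12} for $\omega_{\mathrm{SK},\mathcal{B}_d}$. Both are Hermitian forms expressed in the same coframe $\{\mathrm{d}\mathfrak{z}^i\}_{i\in B}$, where $B=\{1,\dots,g\}\cup\{g+d+1,\dots,3g-3\}$, so the whole statement reduces to the pointwise identity $\tau_{ij}(q_0)=\partial\mathfrak{w}_j/\partial\mathfrak{z}^i$ for $i,j\in B$, with the left-hand side the limiting ambient period entry and the right-hand side the intrinsic one computed on $\mathcal{B}_d$. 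Continuity of the finite block is already provided by Theorem~\ref{main Bd}.

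To obtain \eqref{3e11} I would first restrict $\omega_{\mathrm{SK}}(q)$ to the fixed subspace $T_{q_0}\mathcal{B}_d$. By item~(2) of Theorem~\ref{main Bd} the stratum is cut out by $\mathfrak{z}^{g+1}=\cdots=\mathfrak{z}^{g+d}=0$, so each $\mathrm{d}\mathfrak{z}^{g+k}$ annihilates $T_{q_0}\mathcal{B}_d$ at $q_0$ and only the $A$-block survives in the limit; its entries $\Im(\tau_{ij})$ with $i,j\in B$ extend continuously to $q_0$ by Theorem~\ref{main Bd}(3), hence converge to $\Im(\tau_{ij}(q_0))$. The main obstacle lies exactly here: the restriction and the limit must be interchanged against the divergent transverse block. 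For $q\neq q_0$ the covector $\mathrm{d}\mathfrak{z}^{g+k}|_q$ does not vanish on $T_{q_0}\mathcal{B}_d$, so the diagonal entry $\Im(\tau_{g+k,g+k})\sim-\log|\mathfrak{z}^{g+k}|$ is paired with a nonzero but shrinking covector, producing a $0\cdot\infty$ indeterminacy. I would resolve it by noting that any $v\in T_{q_0}\mathcal{B}_d$ vanishes at $p_{2k-1}|_{q_0}$, so $\mathrm{d}\mathfrak{z}^{g+k}|_{q_0}(v)=0$ by Lemma~\ref{vanishing condition}, and along the transverse approach $\mathrm{d}\mathfrak{z}^{g+k}|_q(v)$ decays like $|\mathfrak{z}^{g+k}(q)|^{1/2}$ (using $\mathfrak{z}^{g+k}(q)\sim(\varepsilon_1-\varepsilon_2)^2$ from \eqref{3e10}), which dominates the logarithm; consequently the transverse and mixed blocks contribute nothing and \eqref{3e11} follows.

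The core identification is then a chain-rule computation. On $\mathcal{U}$ the relation $\mathrm{d}\mathfrak{w}_j=\sum_{i=1}^{3g-3}\tau_{ij}\,\mathrm{d}\mathfrak{z}^i$ holds for $j\in B$ by \eqref{3e7}; pulling it back along the inclusion $\mathcal{B}_d\cap\mathcal{U}\hookrightarrow\mathcal{U}$ and using $\mathrm{d}(\mathfrak{z}^{g+k}|_{\mathcal{B}_d})=0$ leaves $\mathrm{d}(\mathfrak{w}_j|_{\mathcal{B}_d})=\sum_{i\in B}(\tau_{ij}|_{\mathcal{B}_d})\,\mathrm{d}(\mathfrak{z}^i|_{\mathcal{B}_d})$. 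Since $\{\mathfrak{z}^i|_{\mathcal{B}_d}\}_{i\in B}$ are local holomorphic coordinates on $\mathcal{B}_d$, their differentials are independent, and comparison with the intrinsic expansion $\mathrm{d}(\mathfrak{w}_j|_{\mathcal{B}_d})=\sum_{i\in B}(\partial\mathfrak{w}_j/\partial\mathfrak{z}^i)\,\mathrm{d}(\mathfrak{z}^i|_{\mathcal{B}_d})$ forces $\tau_{ij}|_{\mathcal{B}_d}=\partial\mathfrak{w}_j/\partial\mathfrak{z}^i$ for all $i,j\in B$. As a consistency check one can see this directly at the integral level: both entries equal $\int_{\mathfrak{b}_j|_{q_0}}\omega_i|_{q_0}$ for the same holomorphic form $\omega_i|_{q_0}$, the extra ambient normalization $\int_{\mathfrak{a}_{g+k}|_{q_0}}\omega_i|_{q_0}=0$ from Lemma~\ref{dual 1forms} being automatic because $\mathfrak{a}_{g+k}|_{q_0}$ is contractible (Corollary~\ref{contractible preimage}) and $\omega_i|_{q_0}$ is holomorphic.

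Substituting $\tau_{ij}(q_0)=\partial\mathfrak{w}_j/\partial\mathfrak{z}^i$ into \eqref{3e11} makes its right-hand side identical to that of \eqref{3e12}, which is $\omega_{\mathrm{SK},\mathcal{B}_d}(q_0)$; this is the assertion of the corollary. To summarize, the conceptual content is the chain-rule identification of the ambient and intrinsic period matrices, whereas the only genuine analytic difficulty is controlling the order of restriction and limit against the logarithmic blow-up of the transverse block.
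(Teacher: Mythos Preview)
Your overall plan---reduce everything to the equality of the right-hand sides of \eqref{3e11} and \eqref{3e12}---is exactly the paper's argument, which consists of the single sentence ``since the right-hand sides of \eqref{3e12} and \eqref{3e11} are identical''. Your chain-rule paragraph, showing that the ambient $\tau_{ij}$ restricts to the intrinsic $\partial\mathfrak{w}_j/\partial\mathfrak{z}^i$ on $\mathcal{B}_d$ for $i,j\in B$, is a correct and more explicit unpacking of that observation; the consistency check via the contractibility of $\mathfrak{a}_{g+k}|_{q_0}$ is also fine.

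Where you go beyond the paper is in attempting to justify \eqref{3e11} itself, namely that the divergent transverse block contributes nothing in the limit. The paper simply asserts \eqref{3e11} without addressing the $0\cdot\infty$ you correctly flag. Your resolution, however, has a gap. You claim $\mathrm{d}\mathfrak{z}^{g+k}|_q(v)=O\bigl(|\mathfrak{z}^{g+k}|^{1/2}\bigr)$ ``along the transverse approach'', citing only \eqref{3e10}; but \eqref{3e10} gives $\mathfrak{z}^{g+k}\sim(\varepsilon_1-\varepsilon_2)^2$ and says nothing about the differential. A direct computation in the local model (differentiating $\int_{\ell_{2k-1}}\sqrt{q}$ and using that $v$ vanishes at $z=0$) gives $\mathrm{d}\mathfrak{z}^{g+k}|_q(v)\sim c\,(\varepsilon_1+\varepsilon_2)$ to leading order, and $\varepsilon_1+\varepsilon_2$ is \emph{not} controlled by $\varepsilon_1-\varepsilon_2$: along approaches with $\varepsilon_1\approx\varepsilon_2$ both nonzero but $|\varepsilon_1-\varepsilon_2|$ exponentially small in $|\varepsilon_1|$, the product $\bigl(-\log|\varepsilon_1-\varepsilon_2|\bigr)\,|\varepsilon_1+\varepsilon_2|^2$ can blow up. Since the corollary claims a limit over \emph{all} $\mathcal{B}^{\mathrm{reg}}\ni q\to q_0$, handling only transverse approaches is insufficient, and the asserted decay rate is not established. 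In short, this is a genuine analytic subtlety that neither your argument nor the paper's fully resolves.
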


The Kähler potential $\mathcal{K}_0$ of $\omega_{\mathrm{SK}}$ on $\mathcal{B}^{\mathrm{reg}}$ is a global function with the explicit formula \eqref{1e2}. Similarly, the Kähler potential, denoted by $\mathcal{K}_d$, of $\omega_{\mathrm{SK},\mathcal{B}_d}$ on $\mathcal{B}_d$ is likewise a global function given by a formula analogous to \eqref{1e2}, as demonstrated by Hitchin \cite[Proposition 4.2]{hitchin2021integrable}. The following lemma confirms that $\mathcal{K}_d$ may indeed be viewed as an extension of $\mathcal{K}_0$.

\begin{cor}
\label{extension Bd}
For $1 \leq d \leq 2g-3$, the potential $\mathcal{K}_0$ extends to a $C^1$ function in a neighborhood of $\mathcal{B}_d$ in $\mathcal{B}$, coinciding with $\mathcal{K}_d$ on $\mathcal{B}_d$.
\end{cor}

\begin{proof}
Locally on $\mathcal{U}$, the K\"ahler potentials are given by
\[
\mathcal{K}_0 = \frac{\mathrm{i}}{4} \sum_{i=1}^{3g-3} \left( \mathfrak{z}^i \bar{\mathfrak{w}}_i - \mathfrak{w}_i \bar{\mathfrak{z}}^{i} \right) \quad \text{and} \quad \mathcal{K}_d = \frac{\mathrm{i}}{4} \sum_{i \in B} \left( \mathfrak{z}^i \bar{\mathfrak{w}}_i - \mathfrak{w}_i \bar{\mathfrak{z}}^{i} \right),
\]
where $B = \big\{1, \dots, g\big\} \cup \big\{g+d+1, \dots, 3g-3\big\}$. First, $\mathcal{K}_0$ extends continuously to $\mathcal{B}_d$ with $\lim_{q \to q_0} \mathcal{K}_0(q) = \mathcal{K}_d(q_0)$, as $\mathcal{B}_d$ is the common zero locus of $\big\{\mathfrak{z}^{g+k}\big\}_{k=1}^d$ by Proposition~\ref{zero locus Bd}. Second, we investigate the $C^1$-extension of these potentials. The partial derivatives are
\begin{equation}
\label{3e16}
\frac{\partial \mathcal{K}_0}{\partial \mathfrak{z}^j} = \frac{\mathrm{i}}{4} \left( \bar{\mathfrak{w}}_j - \sum_{i=1}^{3g-3} \tau_{ij} \bar{\mathfrak{z}}^{i} \right), \quad \frac{\partial \mathcal{K}_d}{\partial \mathfrak{z}^k} = \frac{\mathrm{i}}{4} \left( \bar{\mathfrak{w}}_k - \sum_{i \in B} \tau_{ik} \bar{\mathfrak{z}}^{i} \right), \ 1 \leq j \leq 3g-3,\ k \in B.
\end{equation}

Fix $j \in B$. Since each $\tau_{ij}(q)$ converge to $\tau_{ij}(q_0)$ for all $1 \leq i \leq 3g-3$ as $q \to q_0$, we have
\[
\lim_{\mathcal{B}^{\mathrm{reg}} \ni q \to q_0} \frac{\partial \mathcal{K}_0(q)}{\partial \mathfrak{z}^j} - \frac{\partial \mathcal{K}_d(q_0)}{\partial \mathfrak{z}^j} = \left(- \frac{\mathrm{i}}{4} \sum_{k=1}^d \tau_{g+k,j} \bar{\mathfrak{z}}^{g+k} \right)(q_0).
\]
As each $\bar{\mathfrak{z}}^{g+k}$ vanishes and $\tau_{g+k,j}$ remains finite on $\mathcal{B}_d$, the derivative $\partial \mathcal{K}_0 / \partial \mathfrak{z}^j$ extends continuously to $\mathcal{B}_d$ and coincides with $\partial \mathcal{K}_d / \partial \mathfrak{z}^j$.

For $j \in B^c$, the continuity of $\partial \mathcal{K}_0 / \partial \mathfrak{z}^j$ in \eqref{3e16} on $\mathcal{U}$ follows from these observations:
\begin{enumerate}
    \item $\bar{\mathfrak{w}}_j$ is continuous on $\mathcal{U}$ by definition.

    \item For $i \neq j$, the continuity of both $\tau_{ij}(q)$ and $\bar{\mathfrak{z}}^i$ on $\mathcal{U}$ implies that of $\tau_{ij} \bar{\mathfrak{z}}^{i}$.

    \item For $i = j$, the estimate $|\tau_{jj}(q)| \le -C_2 \log |\mathfrak{z}^j|$ in \eqref{3e15} ensures that $\tau_{jj}\bar{\mathfrak{z}}^j(q)$ admits a continuous extension to $\mathcal{B}_d$ by zero.
\end{enumerate}
\end{proof}

\subsection{Singularities near the Origin along Radial Directions}
\label{0 Bd}

Fix $q_0 \in \mathcal{B}_d$. In this section, we study the behavior of $\omega_{\mathrm{SK}}$ along the complex line
\[
\mathcal{L}_{q_0} := \big\{ l \cdot q_0 \mid l \in \mathbb{C} \big\},
\]
parametrized by $l \in \mathbb{C}$. Since $\mathcal{L}_{q_0}$ lies within the stratum $\mathcal{B}_d$, we consider the restriction of the metric $\omega_{\mathrm{SK}, \mathcal{B}_d}$ to $\mathcal{L}_{q_0}$. By Corollary \ref{metric Bd}, this restriction coincides with the limit of $\omega_{\mathrm{SK}}$ restricted to the tangent bundle $T\mathcal{L}_{q_0}$, denoted hereafter by $\omega_{\mathcal{L}_{q_0}}$.

\begin{cor}
\label{radial metric}
The form $\omega_{\mathcal{L}_{q_0}}$ defines a flat metric on $\mathcal{L}_{q_0} \setminus \{0\}$, which exhibits a conical singularity at the origin with a cone angle of $\pi$. Specifically,
\[
\omega_{\mathcal{L}_{q_0}} = \frac{\mathrm{i}}{2} C_0 |l|^{-1} \mathrm{d}l \wedge \mathrm{d}\bar{l}, \quad \text{where} \quad C_0 = \frac{\mathrm{i}}{4} \int_C \sqrt{\big.q_0 \bar{q}_0}.
\]
A K\"ahler potential for $\omega_{\mathcal{L}_{q_0}}$ is given by $2C_0 |l|$.
\end{cor}

\begin{proof}
Recall the metric $\omega_{\mathrm{SK}, \mathcal{B}_d}$ is given locally by
\begin{equation}
\label{3e17}
\omega_{\mathrm{SK}, \mathcal{B}_d} = \frac{\mathrm{i}}{2} \sum_{i,j \in B} \Im(\tau_{ij}) \mathrm{d}\mathfrak{z}^i \wedge \mathrm{d}\bar{\mathfrak{z}}^j, \quad \text{with } \tau_{ij}(q) \coloneqq \int_{\mathfrak{b}_j|_q} \omega_i\big|_q,
\end{equation}
where $B = \big\{1, \dots, g\big\} \cup \big\{g+d+1, \dots, 3g-3\big\}$. Observe that all nonzero points on $\mathcal{L}_{q_0}$ share the same divisor as $q_0$. By construction, the neighborhood $\mathcal{U} \subseteq \mathcal{B}$ can be chosen sufficiently large to contain any prescribed bounded simply connected region in $\mathcal{L}_{q_0} \setminus \{0\}$.

We assert that the restriction of each term $\Im(\tau_{ij}) \mathrm{d}\mathfrak{z}^i \wedge \mathrm{d}\bar{\mathfrak{z}}^{j}$ in \eqref{3e17} to $\mathcal{L}_{q_0}$ is given by $C_{ij} |l|^{-1} \mathrm{d}l \wedge \mathrm{d}\bar{l}$. To illustrate, we analyze the term $\Im(\tau_{11}) \mathrm{d}\mathfrak{z}^1 \wedge \mathrm{d}\bar{\mathfrak{z}}^1$ on $\mathcal{L}_{q_0}$; the remaining cases follow by analogy. Using \eqref{3e13} and \eqref{3e5}, we obtain
\[
\mathfrak{z}^1(l) = \int_{\mathfrak{a}_1|_{l q_0}} \theta\big|_{\widetilde{\Sigma_{l q_0}}} = \frac{\sqrt{2}}{2} \int_{\alpha_1} \sqrt{l q_0} = C \cdot \sqrt{l} \quad\implies\quad \mathrm{d}\mathfrak{z}^1 \wedge \mathrm{d}\bar{\mathfrak{z}}^1 = C \cdot |l|^{-1} \, \mathrm{d}l \wedge \mathrm{d}\bar{l}.
\]
Now, for a fixed $l \in \mathbb{C}^*$, define a biholomorphism $f: \widetilde{\Sigma_{q_0}} \to \widetilde{\Sigma_{l q_0}}$ which, in a local trivialization $\widetilde{U} = U \times \mathbb{C}$ of $K_C$ with coordinates $(\widetilde{x}, \widetilde{z})$, is expressed as $f(\widetilde{x}, \widetilde{z}) = (\sqrt{l}\,\widetilde{x}, \widetilde{z})$. By the construction in Section \ref{step1 Bd},
\[
\mathfrak{a}_i\big|_{l q_0} = f_* \left( \mathfrak{a}_i\big|_{q_0} \right), \qquad \mathfrak{b}_i\big|_{l q_0} = f_* \left( \mathfrak{b}_i\big|_{q_0} \right).
\]
Then note that
\[
\int_{\mathfrak{a}_i|_{q_0}} f^* \omega_j\big|_{l q_0} = \int_{f_*\left(\mathfrak{a}_i|_{q_0}\right)} \omega_j\big|_{l q_0} = \int_{\mathfrak{a}_i|_{l q_0}} \omega_j\big|_{l q_0} = \delta_{ij} = \int_{\mathfrak{a}_i|_{q_0}} \omega_j\big|_{q_0}.
\]
The uniqueness statement in Lemma~\ref{dual 1forms} then yields $f^* \omega_j\big|_{l q_0} = \omega_j\big|_{q_0}$. It follows that
\[
\tau_{11}(l) = \int_{\mathfrak{b}_1|_{l q_0}} \omega_1\big|_{l q_0} = \int_{f_*\left(\mathfrak{b}_1|_{q_0}\right)} \omega_1\big|_{l q_0} = \int_{\mathfrak{b}_1|_{q_0}} f^*\omega_1\big|_{l q_0} = \int_{\mathfrak{b}_1|_{q_0}} \omega_1\big|_{q_0} = \tau_{11}(1).
\]
Consequently, $\tau_{11}$ is a constant independent of $l$, confirming the assertion that $\Im(\tau_{11}) \mathrm{d}\mathfrak{z}^1 \wedge \mathrm{d}\bar{\mathfrak{z}}^1$ takes the form $C_{11} |l|^{-1} \mathrm{d}l \wedge \mathrm{d}\bar{l}$. Summing over all $i, j \in B$, it follows that 
\[
\omega_{\mathcal{L}_{q_0}} = \frac{\mathrm{i}}{2} C_0 |l|^{-1} \mathrm{d}l \wedge \mathrm{d}\bar{l}
\]
for some positive constant $C_0$. 

We now determine the constant $C_0$. As previously established, a K\"ahler potential for the metric on $\mathcal{L}_{q_0}$ is $2C_0 |l|$. Alternatively, according to Hitchin \cite[Section 5.2]{hitchin2021integrable}, the K\"ahler potential $\mathcal{K}_d$ for $\omega_{\mathrm{SK}, \mathcal{B}_d}$ is given by
\[
\mathcal{K}_d(q) = \frac{\mathrm{i}}{4} \int_{\widetilde{\Sigma_q}} \theta \wedge \bar{\theta} = \frac{\mathrm{i}}{2} \int_C \sqrt{\big.q \bar{q}}.
\]
Since $q \bar{q}$ is a section of $K_C^2 \otimes \bar{K}_C^2$, the term $\sqrt{q \bar{q}}$ defines a canonical volume form on $C$. Restricting this potential to the radial line $\mathcal{L}_{q_0}$, we obtain $\mathcal{K}_d(lq_0) =\mathcal{K}_d(q_0) |l|$. Comparing the expressions $2C_0 |l|$ and $\mathcal{K}_d(q_0) |l|$, we conclude that
\[
C_0 = \frac{1}{2} \mathcal{K}_d(q_0) = \frac{\mathrm{i}}{4} \int_C \sqrt{\big.q_0 \bar{q}_0}.
\]
\end{proof}

\begin{remark}
In the special case $g = 2$, the result in Corollary~\ref{radial metric} coincides with the explicit expression derived by Hitchin in \cite[Section 5.2]{hitchin2021integrable}.
\end{remark}

\section{Special K\"ahler Metric Singularities on $\mathcal{B}_{2g-2}$}

In this section, we investigate the asymptotic behavior of the special K\"ahler metric near the stratum $\mathcal{B}_{2g-2}$. Let $q_0 \in \mathcal{B}_{2g-2}$ be a fixed point with the divisor
\begin{equation}
\label{4e1}
\operatorname{div}(q_0) = \sum_{k=1}^{2g-2} 2 \cdot p_{2k-1}\big|_{q_0}.
\end{equation}
On a sufficiently small, simply connected neighborhood $\mathcal{U} \subseteq \mathcal{B}$ of $q_0$, we analyze the metric $\omega_{\mathrm{SK}}$ on $\mathcal{U} \cap \mathcal{B}^{\mathrm{reg}}$ following the procedure in Construction~\ref{construction}.

The section is organized as follows. Subsection~\ref{prelim B2g-2} provides preparatory lemmas. Subsection~\ref{step1 B2g-2} implements Step~(1) of the construction by specifying $3g-3$ pairs of cycles on $\widetilde{\Sigma_{q_0}}$ with prescribed intersection numbers. Subsection~\ref{step2 B2g-2} covers Step~(2), deforming these cycles to the spectral fibers $\widetilde{\Sigma}_q$ for $q \in \mathcal{U}$. Finally, Subsection~\ref{results B2g-2} presents the main asymptotic results.

\subsection{Preliminary Lemmas}
\label{prelim B2g-2}

Note that the double cover $\widetilde{\pi} \colon \widetilde{\Sigma_{q_0}} \to C$ is now \emph{unbranched}. Consequently, Lemma~\ref{trivializing disc} is no longer applicable, and we require a new lemma describing the topology of this spectral cover.

\begin{lemma}
\label{topology 2g-2}
Let $\phi \colon H_1(C,\mathbb{Z}) \to \mathbb{Z}_2 = \{0,1\}$ be the monodromy homomorphism induced by the double cover $\widetilde{\pi} \colon \widetilde{\Sigma_{q_0}} \to C$. Then there exists a symplectic basis $\big\{\alpha_i, \beta_i\big\}_{i=1}^g$ of $H_1(C,\mathbb{Z})$ such that $\phi(\alpha_i) = 0$ such that $\phi(\alpha_i) = \phi(\beta_i) = 0$ for all $i$, except for $\phi(\beta_1) = 1$.
\end{lemma}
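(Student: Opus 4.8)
The plan is to reduce the statement to a normal-form result for a nonzero linear functional on the symplectic lattice $H_1(C,\mathbb{Z})$, read modulo $2$. First I would record that the double cover $\widetilde\pi$ is connected: since $q_0\in\mathcal{B}_{2g-2}$ admits no global square root, the argument of Lemma~\ref{Eigenspace} shows $\widetilde{\Sigma_{q_0}}$ is connected, so the monodromy $\phi\colon H_1(C,\mathbb{Z})\to\mathbb{Z}_2$ is surjective, hence nonzero. Because $\phi$ is valued in $\mathbb{Z}_2$, it factors through the reduction $H_1(C,\mathbb{Z})\to H_1(C,\mathbb{F}_2)$, yielding a nonzero functional $\bar\phi$.

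Next I would invoke Poincar\'e duality over $\mathbb{F}_2$. The intersection pairing makes $H_1(C,\mathbb{F}_2)$ a nondegenerate symplectic $\mathbb{F}_2$-vector space, so there is a unique nonzero class $\bar v\in H_1(C,\mathbb{F}_2)$ with $\bar\phi(\bar x)=\langle \bar v,\bar x\rangle$ for all $\bar x$. The entire problem then becomes to exhibit an integral symplectic basis $\{\alpha_i,\beta_i\}_{i=1}^g$ with $\alpha_1\equiv \bar v \pmod 2$: granting this, for every index one computes $\phi(\alpha_i)=\langle \bar v,\bar\alpha_i\rangle=\langle\bar\alpha_1,\bar\alpha_i\rangle=0$ and $\phi(\beta_i)=\langle\bar\alpha_1,\bar\beta_i\rangle=\delta_{1i}$, which is precisely the asserted configuration $\phi(\alpha_i)=\phi(\beta_i)=0$ except $\phi(\beta_1)=1$.

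To produce such a basis I would lift $\bar v$ to a primitive integral vector $v\in H_1(C,\mathbb{Z})$ — for instance the representative whose coordinates in a fixed symplectic basis lie in $\{0,1\}$, which is primitive precisely because $\bar v\neq 0$ — and then extend $v$ to a symplectic basis with $\alpha_1=v$. This last extension is the classical fact that, the intersection form on $H_1(C,\mathbb{Z})$ being unimodular, every primitive vector occurs as the first member of a symplectic basis (equivalently, $\mathrm{Sp}(2g,\mathbb{Z})$ acts transitively on primitive vectors); I would either cite this or prove it by selecting a dual vector $\beta_1$ with $\langle v,\beta_1\rangle=1$ and inducting on the symplectic-orthogonal complement $\{v,\beta_1\}^{\perp}$, which is again a unimodular symplectic lattice of rank $2g-2$.

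The main obstacle — and the reason a naive ``split off $\langle\alpha_1,\beta_1\rangle$ and correct the rest'' strategy fails — is that one cannot adapt a complement to $\phi$ by local moves: modifying a complementary generator by an element of $\langle\alpha_1,\beta_1\rangle$ to cancel its $\phi$-value necessarily destroys its orthogonality to that plane. Routing everything through the Poincar\'e-dual vector $\bar v$ and the transitivity of the symplectic group on primitive (resp.\ nonzero mod-$2$) vectors circumvents this, since it fixes the whole splitting at once rather than correcting it term by term. I expect the only points needing genuine care to be the clean passage between $\phi$ and $\bar v$ modulo $2$ and the verification that the primitive lift extends to an integral symplectic basis.
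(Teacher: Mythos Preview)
Your argument is correct and takes a genuinely different route from the paper's. The paper proceeds by an explicit sequence of symplectic transvections: starting from any symplectic basis with $\phi(\beta_1)=1$, it corrects each offending $\alpha_i$ or $\beta_i$ (for $i\ge 2$) by adding $\beta_1$, while \emph{simultaneously} adding the appropriate $\beta_i$ or $\alpha_i$ to $\alpha_1$ so as to preserve the symplectic relations; a final adjustment $\alpha_1\mapsto\alpha_1+\beta_1$ takes care of $\phi(\alpha_1)$. Your approach instead Poincar\'e-dualizes $\phi$ over $\mathbb{F}_2$ to a nonzero class $\bar v$, lifts it to a primitive integral vector, and then quotes the transitivity of $\mathrm{Sp}(2g,\mathbb{Z})$ on primitive vectors to place that lift as $\alpha_1$. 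The paper's method is shorter, entirely self-contained, and requires no structural input beyond elementary bookkeeping; yours is more conceptual, explains \emph{why} the normal form exists (any nonzero functional on a symplectic $\mathbb{F}_2$-space looks the same), and would port immediately to analogous normal-form statements.

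One small correction to your commentary: your claim that a term-by-term correction ``necessarily destroys orthogonality'' and therefore cannot work is too strong. The paper's proof shows exactly how to make it work---the destroyed pairing $\langle\alpha_1,\alpha_i+\beta_1\rangle=1$ is repaired in the same step by the compensating move $\alpha_1\mapsto\alpha_1+\beta_i$. This does not affect the validity of your own argument, but you should drop or soften that remark.
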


\begin{proof}
We claim that $\phi$ is not identically zero. Otherwise, by the Riemann existence and uniqueness theorem (see \cite[Section~4.2.2, Theorem~2]{donaldson2011riemann}), the corresponding double cover $\widetilde{\pi}$ would be trivial, i.e., $\widetilde{\Sigma_{q_0}} \cong C \sqcup C$. This would imply that $q_0$ admits a global square root.

Let $\big\{\alpha_i, \beta_i\big\}_{i=1}^g$ be an arbitrary symplectic basis of $H_1(C,\mathbb{Z})$, and assume $\phi(\beta_1) = 1$. We modify the basis elements $\big\{\alpha_i, \beta_i\big\}_{i=2}^g$ iteratively to preserve the symplectic structure:
\begin{enumerate}
    \item If $\phi(\alpha_i) = 0$ (resp.\ $\phi(\beta_i) = 0$), the element remains unchanged.
    \item If $\phi(\alpha_i) = 1$ (resp.\ $\phi(\beta_i) = 1$), we redefine $\alpha_i \coloneqq \alpha_i + \beta_1$ (resp.\ $\beta_i \coloneqq \beta_i + \beta_1$) and simultaneously redefine $\alpha_1 \coloneqq \alpha_1 + \beta_i$ (resp.\ $\alpha_1 \coloneqq \alpha_1 - \alpha_i$).
\end{enumerate}
After these modifications, all elements of $\big\{\alpha_i, \beta_i\big\}_{i=2}^g$ lie in $\ker \phi$. Finally, if $\phi(\alpha_1) = 1$, we redefine $\alpha_1 \coloneqq \alpha_1 + \beta_1$. This completes the proof.
\end{proof}

\begin{figure}[htbp]
    \centering
    \includegraphics[width=0.75\linewidth]{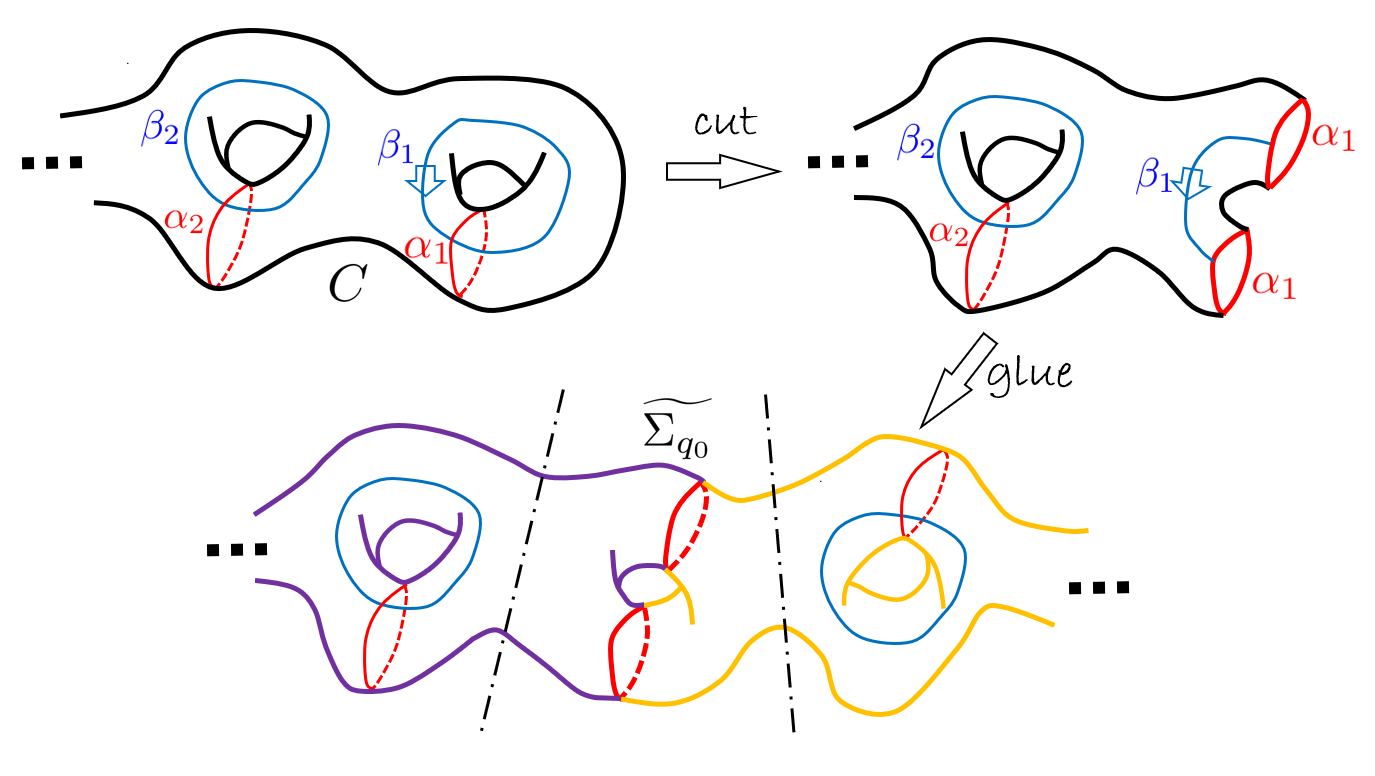}
    \caption*{Figure 2a: Model of the double cover $\widetilde{\pi}: \widetilde{\Sigma_{q_0}} \to C$}
\end{figure}

According to Lemma~\ref{topology 2g-2}, the double cover $\widetilde{\pi} \colon \widetilde{\Sigma_{q_0}} \to C$ is topologically constructed by cutting $C$ along $\alpha_1$, taking two copies of the resulting surface, and cross-gluing them along the boundaries, as illustrated in Figure~2a. The preimage of each loop in $\big\{\alpha_i\big\}_{i=1}^g \cup \big\{\beta_i\big\}_{i=2}^g$ consists of two disjoint oriented loops on $\widetilde{\Sigma_{q_0}}$, denoted by $\big\{a_{i,1}\big|_{q_0}, a_{i,2}\big|_{q_0}\big\}_{i=1}^g$ and $\big\{b_{i,1}\big|_{q_0}, b_{i,2}\big|_{q_0}\big\}_{i=2}^g$, respectively. In contrast, the preimage of $\beta_1$ consists of two arcs that join end-to-end to form a single loop $b_1\big|_{q_0}$. Cutting $\widetilde{\Sigma_{q_0}}$ along the dashed lines in Figure~2a yields the subsurface shown in Figure~2b, which clarifies the relation between $a_{1,1}\big|_{q_0}$, $a_{1,2}\big|_{q_0}$, and $b_1\big|_{q_0}$.

\begin{figure}[htbp]
    \centering
    \includegraphics[width=0.5\linewidth]{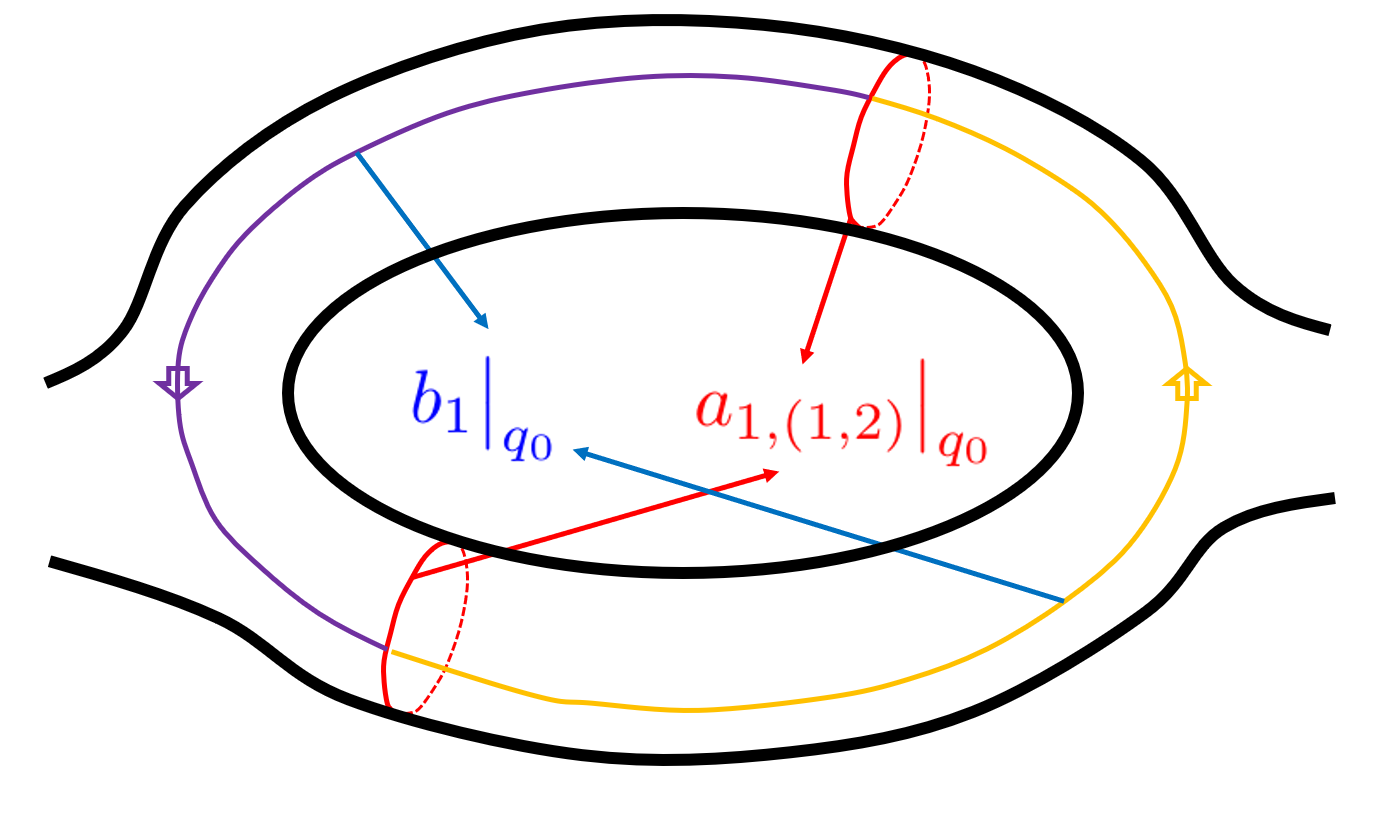}
    \caption*{Figure 2b: The configuration of $a_{1,1}\big|_{q_0}$, $a_{1,2}\big|_{q_0}$, and $b_1\big|_{q_0}$}
\end{figure}

By Lemma~\ref{Eigenspace}, the subspace $H_1\left(\widetilde{\Sigma_{q_0}},\mathbb{R}\right)^-$ is $(g-1)$-dimensional and admits the following explicit description.

\begin{cor}
\label{basis q0}
The collection $\left\{\mathfrak{a}_i\big|_{q_0}, \mathfrak{b}_i\big|_{q_0}\right\}_{i=2}^g$ forms a symplectic basis of $H_1\left(\widetilde{\Sigma_{q_0}},\mathbb{R}\right)^-$, where
\begin{equation}
\label{4e2}
\mathfrak{a}_i\big|_{q_0} \coloneqq \frac{\sqrt{2}}{2}\left(a_{i,1}\big|_{q_0} - a_{i,2}\big|_{q_0}\right), \qquad 
\mathfrak{b}_i\big|_{q_0} \coloneqq \frac{\sqrt{2}}{2}\left(b_{i,1}\big|_{q_0} - b_{i,2}\big|_{q_0}\right).
\end{equation}
\end{cor}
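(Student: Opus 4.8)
The plan is to check the three defining properties of a symplectic basis — membership in the anti-invariant eigenspace $H_1(\widetilde{\Sigma_{q_0}},\mathbb{R})^-$, the symplectic intersection identities, and the spanning property — and to reduce each of them to the elementary intersection theory of the symplectic basis $\{\alpha_i,\beta_i\}_{i=1}^g$ on $C$ furnished by Lemma~\ref{topology 2g-2}. I would begin with the dimension bookkeeping. Because $\operatorname{div}(q_0)$ in \eqref{4e1} consists entirely of even-order (double) zeros, the cover $\widetilde{\pi}$ is unramified, so $r_{\mathrm{odd}}=0$, and Lemma~\ref{Eigenspace} yields $\dim_{\mathbb{R}}H_1(\widetilde{\Sigma_{q_0}},\mathbb{R})^-=2(g-1)$. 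Hence a symplectic basis of the anti-invariant part must consist of exactly $g-1$ pairs, which is precisely the index range $i=2,\dots,g$; this means it will suffice to prove that the $2(g-1)$ classes in \eqref{4e2} are linearly independent and satisfy the correct intersection relations.

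For anti-invariance I would use that, for each $i\ge 2$, the loop $\alpha_i$ (resp.\ $\beta_i$) has $\phi(\alpha_i)=0$ (resp.\ $\phi(\beta_i)=0$), so its preimage splits into the two sheet-copies $a_{i,1}\big|_{q_0},a_{i,2}\big|_{q_0}$ (resp.\ $b_{i,1}\big|_{q_0},b_{i,2}\big|_{q_0}$) that are interchanged by the sheet-exchange involution: $\tau_*\bigl(a_{i,1}\big|_{q_0}\bigr)=a_{i,2}\big|_{q_0}$ and similarly for the $b$'s. Applying $\tau_*$ to \eqref{4e2} then gives $\tau_*\bigl(\mathfrak{a}_i\big|_{q_0}\bigr)=-\mathfrak{a}_i\big|_{q_0}$ and $\tau_*\bigl(\mathfrak{b}_i\big|_{q_0}\bigr)=-\mathfrak{b}_i\big|_{q_0}$, so all of these classes indeed lie in $H_1(\widetilde{\Sigma_{q_0}},\mathbb{R})^-$.

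The heart of the argument is the intersection computation. Since $\langle\alpha_1,\alpha_i\rangle=\langle\alpha_1,\beta_i\rangle=0$ for $i\ge 2$, I would represent each $\alpha_i,\beta_i$ ($i\ge 2$) by loops disjoint from $\alpha_1$, i.e.\ disjoint from the branch cut used to build $\widetilde{\Sigma_{q_0}}$ in the topological model of Figure~2a. Consequently each lift lies entirely in one of the two sheets, cross-sheet intersections vanish, and same-sheet intersections reproduce the pairing downstairs; moreover $\tau_*$ preserves the intersection form, so the two sheets contribute equally. Expanding \eqref{4e2}, the normalization $\tfrac{\sqrt2}{2}$ then gives $\langle\mathfrak{a}_i\big|_{q_0},\mathfrak{b}_j\big|_{q_0}\rangle=\tfrac12\bigl(\langle\alpha_i,\beta_j\rangle+\langle\alpha_i,\beta_j\rangle\bigr)=\delta_{ij}$, while $\langle\mathfrak{a}_i\big|_{q_0},\mathfrak{a}_j\big|_{q_0}\rangle=\langle\mathfrak{b}_i\big|_{q_0},\mathfrak{b}_j\big|_{q_0}\rangle=0$. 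These relations immediately force linear independence — pairing a vanishing linear combination against $\mathfrak{b}_j\big|_{q_0}$ (resp.\ $\mathfrak{a}_j\big|_{q_0}$) extracts each coefficient — and since there are $2(g-1)=\dim_{\mathbb{R}}H_1(\widetilde{\Sigma_{q_0}},\mathbb{R})^-$ of them, they form a basis.

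I expect the main obstacle to be the step justifying the sheet-wise splitting, namely choosing geometric representatives of $\alpha_i,\beta_i$ ($i\ge 2$) that are simultaneously disjoint from the cut $\alpha_1$ \emph{and} realize the standard pairing $\langle\alpha_i,\beta_j\rangle=\delta_{ij}$, so that cross-sheet intersection numbers genuinely vanish and each sheet reproduces the downstairs count. This is where the explicit double-cover model of Figure~2a (together with the cut-and-reglue picture relating $a_{1,1}\big|_{q_0},a_{1,2}\big|_{q_0}$ and $b_1\big|_{q_0}$ in Figure~2b) does the real work; the remaining linear algebra is then routine.
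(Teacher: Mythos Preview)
Your proposal is correct and follows essentially the same approach that the paper implicitly intends: the corollary is stated without proof, immediately after Lemma~\ref{topology 2g-2} and the cut-and-glue description of $\widetilde{\Sigma_{q_0}}$ in Figures~2a--2b, together with the dimension count from Lemma~\ref{Eigenspace}. Your argument makes explicit exactly the three steps the paper leaves to the reader --- anti-invariance via $\tau_*$ swapping sheet copies for $i\ge 2$, the intersection computation using representatives disjoint from $\alpha_1$, and the dimension match --- and your identification of the delicate point (that $\alpha_i,\beta_i$ for $i\ge 2$ can be realized disjointly from the cut $\alpha_1$) is precisely why the index $i=1$ must be excluded, as the paper notes in the remark following the corollary.
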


\begin{remark}
Note that the index $i$ in Corollary~\ref{basis q0} ranges from $2$ to $g$. For $i=1$, both $\mathfrak{a}_1\big|_{q_0} \coloneqq \frac{\sqrt{2}}{2}\left(a_{1,1}\big|_{q_0} - a_{1,2}\big|_{q_0}\right)$ and $b_1\big|_{q_0}$ lie in $H_1\left(\widetilde{\Sigma_{q_0}},\mathbb{R}\right)^+$ rather than the $(-1)$-eigenspace.
\end{remark}

\subsection{Construction of Integration Contours}
\label{step1 B2g-2}

In this subsection, we implement Step~(1) of Construction~\ref{construction}. Specifically, we construct $3g-3$ pairs of curves $\left\{\mathfrak{a}_i\big|_{q_0}, \mathfrak{b}_i\big|_{q_0}\right\}_{i=2}^{3g-2}$ on $\widetilde{\Sigma_{q_0}}$ satisfying the required intersection properties.

The first $g-1$ pairs $\left\{\mathfrak{a}_i\big|_{q_0}, \mathfrak{b}_i\big|_{q_0}\right\}_{i=2}^{g}$ are provided by Corollary~\ref{basis q0}. To obtain the remaining $2g-2$ pairs, we first construct auxiliary smooth loops $\left\{\ell_i\big|_{q_0}\right\}_{i=1}^{4g-4}$ as follows. For $1 \le k \le 2g-2$, define $\ell_{2k-1}\big|_{q_0} \coloneqq \partial \mathbb{D}_{2k-1}\big|_{q_0}$, where $\mathbb{D}_{2k-1}\big|_{q_0}$ is a small disc embedded in $C$ centered at $p_{2k-1}\big|_{q_0}$. Then, let $\ell_{2k}\big|_{q_0}$ be an oriented simple loop on $C$ based at $p_{2k-1}\big|_{q_0}$ that is disjoint from the collection $\big\{\alpha_i, \beta_i\big\}_{i=2}^g$ and homotopic to $\beta_1$, as mentioned in Lemma~\ref{topology 2g-2}. Without loss of generality, for $1 \le j, k \le 2g-2$, we impose the following intersection conditions:
\begin{enumerate}
    \item The loops $\ell_{2j}\big|_{q_0}$ are mutually disjoint and each is disjoint from $\ell_{2k-1}\big|_{q_0}$ whenever $j \neq k$.
    
    \item The loop $\ell_{2k}\big|_{q_0}$ intersects $\ell_{2k-1}\big|_{q_0}$ transversely at exactly two points: once when entering and once when exiting the disc $\mathbb{D}_{2k-1}\big|_{q_0}$.
\end{enumerate}

\begin{figure}[htbp]
    \centering
    \begin{subfigure}{0.49\linewidth}
        \centering
        \includegraphics[width=\linewidth]{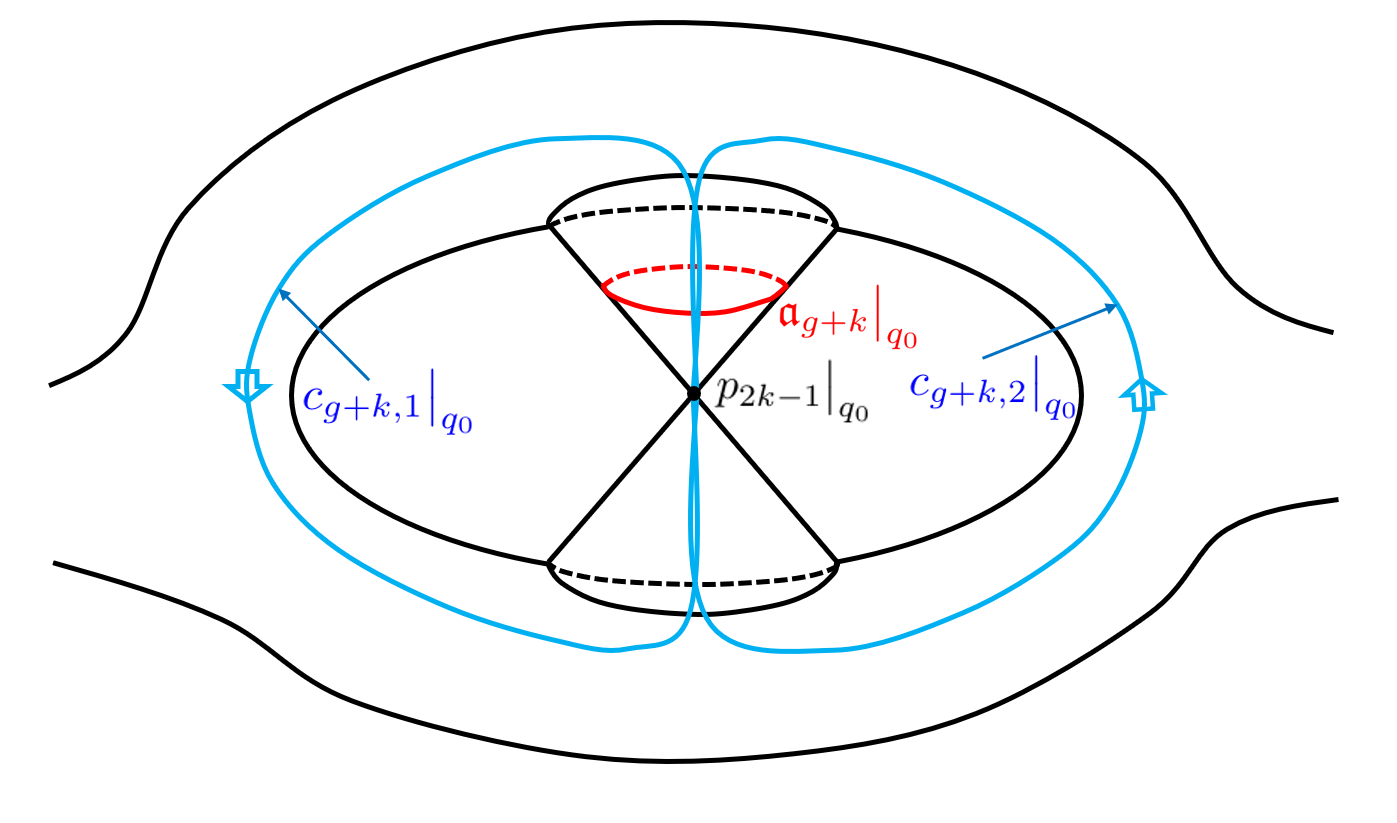}
        \caption*{Figure 3a: Configuration of $\mathfrak{a}_{g+k}\big|_{q_0}$ and $c_{g+k,i}\big|_{q_0}$}
    \end{subfigure}
    \hfill
    \begin{subfigure}{0.49\linewidth}
        \centering
        \includegraphics[width=\linewidth]{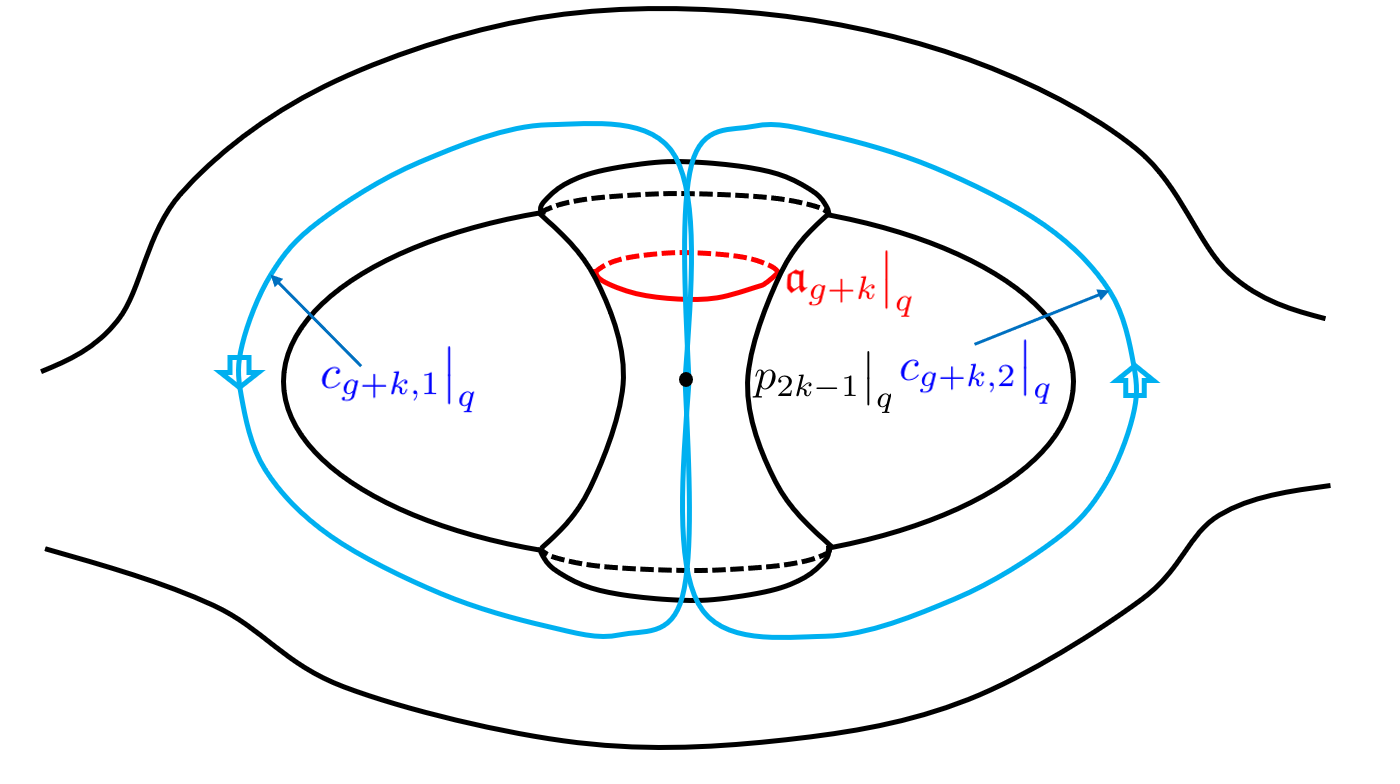}
        \caption*{Figure 3b: Configuration of  $\mathfrak{a}_{g+k}\big|_{q}$ and $c_{g+k,i}\big|_{q}$}
    \end{subfigure}
\end{figure}

The preimage of each loop $\ell_{2k-1}\big|_{q_0}$ consists of two disjoint contractible loops on $\widetilde{\Sigma_{q_0}}$. We define $\mathfrak{a}_{g+k}\big|_{q_0}$ to be either of these loops; the chosen loop is shown in red in Figure~3a. The preimage of $\ell_{2k}\big|_{q_0}$ consists of two oriented arcs on $\widetilde{\Sigma_{q_0}}$, denoted by $c_{g+k,1}\big|_{q_0}$ and $c_{g+k,2}\big|_{q_0}$, which are depicted as blue curves in Figure~3a. We then set
\[
\mathfrak{b}_{g+k}\big|_{q_0} \coloneqq \frac{1}{2}\left(c_{g+k,1}\big|_{q_0} - c_{g+k,2}\big|_{q_0}\right).
\]
The collection $\left\{\mathfrak{a}_i\big|_{q_0}, \mathfrak{b}_i\big|_{q_0}\right\}_{i=2}^{3g-2}$ constructed above, after equipping each $\mathfrak{a}_{g+k}\big|_{q_0}$ for $1 \le k \le 2g-2$ with a suitable orientation, satisfies the intersection properties:
\[
\left\langle \mathfrak{a}_i\big|_{q_0}, \mathfrak{b}_j\big|_{q_0} \right\rangle = \delta_{ij}, \quad \left\langle \mathfrak{a}_i\big|_{q_0}, \mathfrak{a}_j\big|_{q_0} \right\rangle = \left\langle \mathfrak{b}_i\big|_{q_0}, \mathfrak{b}_j\big|_{q_0} \right\rangle = 0 \qquad \text{for } 2 \le i, j \le 3g-2.
\]

\subsection{Deformation of Integration Contours}
\label{step2 B2g-2}

In this subsection, we implement Step~(2) of Construction~\ref{construction}. We deform the collection of curves $\left\{ \mathfrak{a}_i\big|_{q_0}, \mathfrak{b}_i\big|_{q_0} \right\}_{i=2}^{3g-2}$ constructed in Subsection~\ref{step1 B2g-2} onto $\widetilde{\Sigma_q}$ for $q$ in a neighborhood $\mathcal{U}$ of $q_0$. We require that, for every $q \in \mathcal{U} \cap \mathcal{B}^{\mathrm{reg}}$, the deformed collection constitutes a symplectic basis of $H_1\left(\widetilde{\Sigma_q}, \mathbb{Z}\right)^-$.

Shrinking $\mathcal{U}$ if necessary, we ensure that every $q \in \mathcal{U}$ has either two simple zeros or one double zero in each disc $\mathbb{D}_{2k-1}\big|_{q_0}$ for $1 \le k \le 2g-2$.
\begin{enumerate}
    \item The deformation of $\left\{ \mathfrak{a}_i\big|_{q_0}, \mathfrak{b}_i\big|_{q_0} \right\}_{i=2}^g$ is straightforward. Specifically, we define:
    \[
    \mathfrak{a}_i\big|_{q} \coloneqq \frac{\sqrt{2}}{2} \left( a_i\big|_{q} - a_{i+g}\big|_{q} \right), \quad \mathfrak{b}_i\big|_{q} \coloneqq \frac{\sqrt{2}}{2} \left( b_i\big|_{q} - b_{i+g}\big|_{q} \right) \quad \text{for } 2 \le i \le g, \, q \in \mathcal{U},
    \]
    where $\left\{ a_i\big|_{q}, a_{i+g}\big|_{q} \right\}$ and $\left\{ b_i\big|_{q}, b_{i+g}\big|_{q} \right\}$ denote the two preimages of $\alpha_i$ and $\beta_i$ mentioned in Lemma~\ref{topology 2g-2}, respectively, under the spectral covering $\widetilde{\pi} \colon \widetilde{\Sigma_{q}} \to C$.
    
    \item Simply define
    \[
    \mathbb{D}_{2k-1}\big|_q \coloneqq \mathbb{D}_{2k-1}\big|_{q_0} \quad \ell_{2k-1}\big|_q \coloneqq \ell_{2k-1}\big|_{q_0} = \partial \mathbb{D}_{2k-1}\big|_{q_0} \quad \text{for } 1 \le k \le 2g-2.
    \]
    Since these discs and loops do not deform, we shall omit the subscript $q$ in subsequent discussions. As the preimage of each $\ell_{2k-1}$ consists of two disjoint loops on $\widetilde{\Sigma_q}$, we can choose one of them as $\mathfrak{a}_{g+k}\big|_q$ and equip it with a suitable orientation such that $\mathfrak{a}_{g+k}\big|_q$ varies continuously from $\mathfrak{a}_{g+k}\big|_{q_0}$.

    \item For $1 \le k \le 2g-2$, we construct a family of curves $\left(\ell_{2k}\big|_{q}\right)$ on $C$, rather than a single curve, and refer to it as the deformation of $\ell_{2k}\big|_{q_0}$. Outside $\mathbb{D}_{2k-1}$, we require each $\ell_{2k}\big|_{q}$ in this family to coincide with $\ell_{2k}\big|_{q_0}$. Let $s_k$ and $t_k$ be the two intersection points of $\ell_{2k}\big|_{q_0}$ with $\ell_{2k-1}$. Inside $\mathbb{D}_{2k-1}$, we define $\ell_{2k}\big|_q$ according to the following cases:
\begin{enumerate}
    \item If $q$ has a double zero in $\mathbb{D}_{2k-1}$, let $\ell_{2k}\big|_q$ be a simple curve from $s_k$ to $t_k$ passing through this zero.
    
    \item If $q$ has two simple zeros in $\mathbb{D}_{2k-1}$, let $\ell_{2k}\big|_q$ be a simple curve from $s_k$ to $t_k$ that passes through exactly one of these zeros.
\end{enumerate}
The preimage of $\ell_{2k}\big|_q$ on $\widetilde{\Sigma_q}$ consists of two oriented arcs $c_{g+k,1}\big|_q$ and $c_{g+k,2}\big|_q$, whose configuration for case (a) is analogous to Figure~3a, and for case (b) is analogous to Figure~3b. Finally, we set
\[
\mathfrak{b}_{g+k}\big|_{q} \coloneqq \frac{1}{2}\left(c_{g+k,1}\big|_{q} - c_{g+k,2}\big|_{q}\right).
\]
Since the deformation of $\ell_{2k}\big|_{q_0}$ is not unique, the resulting $\mathfrak{b}_{g+k}\big|_q$ is also not uniquely determined. In case (b), different choices $\ell_{2k}\big|_q$ and $\ell'_{2k}\big|_{q}$ give rise to different representatives $\mathfrak{b}_{g+k}\big|_q$ and $\mathfrak{b}'_{g+k}\big|_{q}$; when viewed as elements in $H_1\left(\widetilde{\Sigma_{q}}, \mathbb{R}\right)^-$, they satisfy
\begin{equation}
\label{4e3}
\mathfrak{b}'_{g+k}\big|_{q} = \mathfrak{b}_{g+k}\big|_{q} + n_k \cdot \mathfrak{a}_{g+k}\big|_{q} \quad \text{for some } n_k \in \mathbb{Z}.
\end{equation}
\end{enumerate}

This completes the construction of the deformation of $\left\{ \mathfrak{a}_i\big|_{q_0}, \mathfrak{b}_i\big|_{q_0} \right\}_{i=2}^{3g-2}$. Similar to the statement of Lemma~\ref{continuous choice}, for any $q \in \mathcal{U} \cap \mathcal{B}^{\mathrm{reg}}$, there exists a neighborhood $\mathcal{V}\subseteq \mathcal{U}$ of $q$ such that the collection $\left\{ \mathfrak{a}_i\big|_{q}, \mathfrak{b}_i\big|_{q} \right\}_{i=2}^{3g-2}$ can be selected to be a symplectic basis of $H_1\left(\widetilde{\Sigma_q}, \mathbb{Z}\right)^-$ for $q \in \mathcal{V}$.

\subsection{Main Results for the $\mathcal{B}_{2g-2}$ Case}
\label{results B2g-2}

In this subsection, we present the main results for the case $q_0 \in \mathcal{B}_{2g-2}$. Theorem~\ref{main B2g-2} provides the singular model for $\omega_{\mathrm{SK}}$ in a neighborhood of $q_0$. Corollary~\ref{metric B2g-2} describes the limiting behavior of $\omega_{\mathrm{SK}}$ along directions tangent to $\mathcal{B}_{2g-2}$, while Corollary~\ref{extension B2g-2} establishes that the K\"ahler potential of $\omega_{\mathrm{SK}}$ admits a $C^1$-extension to $\mathcal{B}_{2g-2}$. Furthermore, Corollary~\ref{radial metric B2g-2} investigates the metric along the complex line spanned by $q_0$. 

Recall $q_0 \in \mathcal{B}_d$ and let $\mathcal{U} \subseteq \mathcal{B}$ be its neighborhood. Define $3g-3$ pairs of functions on $\mathcal{U}$ by
\begin{equation}
\label{4e4}
\mathfrak{z}^i(q) \coloneqq \int_{\mathfrak{a}_i|_q} \theta\big|_{\widetilde{\Sigma_q}}, \quad 
\mathfrak{w}_i(q) \coloneqq - \int_{\mathfrak{b}_i|_q} \theta\big|_{\widetilde{\Sigma_q}}, \qquad \text{for } 2 \le i \le 3g-2.
\end{equation}
The functions $\big\{ \mathfrak{w}_i\big\}_{i=g+1}^{3g-2}$ are multi-valued analytic on $\mathcal{U} \cap \mathcal{B}^{\mathrm{reg}}$ because the homology classes $\mathfrak{b}_i\big|_q$ are not uniquely determined, while the remaining functions are single-valued and holomorphic. According to \eqref{4e3}, the branches of each $\mathfrak{w}_i$ for $g+1 \le i \le 3g-2$ are related by
\begin{equation}
\label{4e5}
\mathfrak{w}'_i(q) = \mathfrak{w}_i(q) + n_i \cdot \mathfrak{z}^i(q) \quad \text{for some } n_i \in \mathbb{Z}.
\end{equation}

\begin{prop}
\label{zero locus B2g-2}
For a sufficiently small neighborhood $\mathcal{U}$, the intersection $\mathcal{B}_{2g-2} \cap \mathcal{U}$ is the common zero locus of $\left\{ \mathfrak{z}^{g+k} \right\}_{k=1}^{2g-2}$, which is a complex submanifold of $\mathcal{B}$ with dimension $g-1$. The tangent space of $\mathcal{B}_{2g-2}$ at $q_0$ is a subspace of $T_{q_0} \mathcal{B} \cong H^0(C, K_C^2)$, given by
\[
T_{q_0} \mathcal{B}_{2g-2} \cong H^0\bigl(C, K_C^2(-D_{q_0})\bigr),\quad\text{where }D_{q_0} \coloneqq \left\lfloor \frac{1}{2} \operatorname{div}(q_0) \right\rfloor.
\]
\end{prop}

\begin{proof}
The proof follows the same pattern as Proposition~\ref{zero locus Bd}; we outline the main steps.
\begin{enumerate}
    \item By Lemma~\ref{corr estimate}, $\mathcal{B}_{2g-2} \cap \mathcal{U}$ is the common zero locus of the functions $\big\{ \mathfrak{z}^{g+k} \big\}_{k=1}^{2g-2}$.
    
    \item For $\dot{q} \in T_{q_0}\mathcal{B}_{2g-2}$, we have $\mathrm{d}\mathfrak{z}^{g+k}(\dot{q}) = 0$ if and only if $\dot{q}$ vanishes at $p_{2k-1}\big|_{q_0}$.
    
    \item Show that $\bigwedge_{k=1}^{2g-2} \mathrm{d}\mathfrak{z}^{g+k}$ does not vanish at $q_0$.
\end{enumerate}
The only modification in the proof of item (3) is that, when applying the Riemann--Roch theorem, one uses the fact that $q_0$ is not a global square.
\end{proof}

\begin{lemma}
\label{dual 1forms 2g-2}
For each $q \in \mathcal{U}$, there exists a unique basis $\left\{ \omega_i\big|_{q} \right\}_{i=2}^{3g-2}$ of $H^0\left(\widetilde{\Sigma_{q}}, K_{\widetilde{\Sigma_{q}}}(\widetilde{D_{q}})\right)^-$ satisfying
\[
\int_{\mathfrak{a}_i|_{q}} \omega_j\big|_{q} = \delta_{ij}, \qquad 2 \le i,j \le 3g-2.
\]
In particular, on $\widetilde{\Sigma_{q_0}}$, the form $\omega_k\big|_{q_0}$ is holomorphic for $2 \le k \le g$ and meromorphic for $g+1 \le k \le 3g-2$. These $\omega_{g+i}\big|_{q_0}$ for $1 \le i \le 2g-2$ have only simple poles, which are located at the two points in $\pi^{-1}\left(p_{2i-1}\big|_{q_0}\right)$, with residues $\pm \frac{1}{2\pi \mathrm{i}}$, respectively.
\end{lemma}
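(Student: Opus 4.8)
The plan is to identify $\{\omega_i|_{q}\}_{i=2}^{3g-2}$ as the basis dual to the $\mathfrak{a}$-cycles under the period pairing, exactly as in Lemma~\ref{dual 1forms}, and then to read off the pole data from the geometry of those cycles. Concretely, for $q=q_0$ I would consider the linear map
\[
P\colon H^0\!\left(\widetilde{\Sigma_{q_0}},\, K_{\widetilde{\Sigma_{q_0}}}\!\left(\widetilde{D_{q_0}}\right)\right)^- \longrightarrow \mathbb{C}^{3g-3},
\qquad
\omega \longmapsto \left(\int_{\mathfrak{a}_i|_{q_0}}\omega\right)_{i=2}^{3g-2}.
\]
By Corollary~\ref{H^0 isomorphism} the source has complex dimension $3g-3$, matching the target, so it suffices to show $P$ is injective; the required family is then $P^{-1}$ applied to the standard basis, which gives existence and uniqueness at once.

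For injectivity I would let $\omega$ lie in the kernel and exploit the fact that the $\mathfrak{a}$-cycles split into two geometrically distinct groups. For $1\le k\le 2g-2$ the cycle $\mathfrak{a}_{g+k}|_{q_0}$ is one of the two contractible loops in $\widetilde{\pi}^{-1}\!\left(\partial\mathbb{D}_{2k-1}\right)$ encircling a single preimage of the double zero $p_{2k-1}|_{q_0}$ (Corollary~\ref{contractible preimage}), so that $\int_{\mathfrak{a}_{g+k}|_{q_0}}\omega = 2\pi\mathrm{i}\,\operatorname{Res}_{\widetilde{p}_{2k-1,1}|_{q_0}}\omega$ up to the chosen orientation. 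Vanishing of this period kills the residue at $\widetilde{p}_{2k-1,1}|_{q_0}$, and the anti-invariance $\tau^*\omega=-\omega$ forces $\operatorname{Res}_{\widetilde{p}_{2k-1,2}|_{q_0}}\omega=-\operatorname{Res}_{\widetilde{p}_{2k-1,1}|_{q_0}}\omega=0$ at the conjugate point; hence $\omega$ is genuinely holomorphic, i.e.\ $\omega\in H^{1,0}(\widetilde{\Sigma_{q_0}})^-$. The remaining hypotheses $\int_{\mathfrak{a}_i|_{q_0}}\omega=0$ for $2\le i\le g$ then say that this anti-invariant holomorphic form has vanishing $\mathfrak{a}$-periods along the symplectic basis $\{\mathfrak{a}_i|_{q_0},\mathfrak{b}_i|_{q_0}\}_{i=2}^g$ of $H_1(\widetilde{\Sigma_{q_0}},\mathbb{R})^-$ (Corollary~\ref{basis q0}). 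Since an anti-invariant form also has vanishing periods over all of $H_1(\widetilde{\Sigma_{q_0}},\mathbb{R})^+$ (the computation in the proof of Lemma~\ref{Eigenspace}), the Riemann bilinear relation gives $\tfrac{\mathrm{i}}{2}\int_{\widetilde{\Sigma_{q_0}}}\omega\wedge\bar\omega=0$, whence $\omega=0$. Thus $P$ is an isomorphism.

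With the dual basis in hand the pole description is immediate. For $g+1\le g+k\le 3g-2$ the defining relations give $\int_{\mathfrak{a}_{g+k}|_{q_0}}\omega_{g+k}|_{q_0}=1$ and $\int_{\mathfrak{a}_{g+j}|_{q_0}}\omega_{g+k}|_{q_0}=0$ for $j\neq k$; reading these as $2\pi\mathrm{i}$ times residues yields $\operatorname{Res}_{\widetilde{p}_{2k-1,1}|_{q_0}}\omega_{g+k}|_{q_0}=\tfrac{1}{2\pi\mathrm{i}}$, hence $-\tfrac{1}{2\pi\mathrm{i}}$ at $\widetilde{p}_{2k-1,2}|_{q_0}$ by anti-invariance, and vanishing residues at all other $\widetilde{p}_{2j-1,\bullet}|_{q_0}$. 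As an element of $H^0(K_{\widetilde{\Sigma_{q_0}}}(\widetilde{D_{q_0}}))$ can have only simple poles supported on $\widetilde{D_{q_0}}$, this pins down the polar divisor exactly. For $2\le k\le g$ the same residue reading of $\int_{\mathfrak{a}_{g+j}|_{q_0}}\omega_k|_{q_0}=0$ shows all residues vanish, so $\omega_k|_{q_0}$ is holomorphic. Finally, the identical argument applies verbatim on $\widetilde{\Sigma_q}$ with divisor $\widetilde{D_q}$ for general $q\in\mathcal{U}$; when $q\in\mathcal{B}^{\mathrm{reg}}$ one has $\widetilde{D_q}=0$ and every $\omega_i|_q$ is automatically holomorphic.

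I expect the injectivity step to be the main obstacle, and specifically the clean \emph{decoupling} of the two groups of $\mathfrak{a}$-cycles: one must verify that killing the residues through the contractible cycles produces a genuinely holomorphic form \emph{before} invoking the bilinear relation, and that the residue bookkeeping under $\tau^*\omega=-\omega$ is consistent with the orientation conventions fixed for $\mathfrak{a}_{g+k}|_{q_0}$ (which is what selects the sign of $\tfrac{1}{2\pi\mathrm{i}}$ at the two preimages). Everything else reduces to the dimension count of Corollary~\ref{H^0 isomorphism} together with the classical fact that $\mathfrak{a}$-periods determine a holomorphic differential, so no new estimate is needed.
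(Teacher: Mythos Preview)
Your proof is correct. The paper's own proof merely refers back to Lemma~\ref{dual 1forms} and Corollary~\ref{residues}, whose argument is more \emph{constructive}: first build the holomorphic $\omega_i|_{q_0}$ for $2\le i\le g$ as the normalized anti-invariant differentials dual to the half-basis $\{\mathfrak{a}_i|_{q_0}\}_{i=2}^g$ of $H_1(\widetilde{\Sigma_{q_0}},\mathbb{R})^-$; then for each $k$ pick by hand an anti-invariant meromorphic form with simple poles only at $\widetilde{p}_{2k-1,\bullet}|_{q_0}$, rescale so its $\mathfrak{a}_{g+k}$-period equals $1$, and subtract a holomorphic combination to kill the remaining $\mathfrak{a}_i$-periods. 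You instead prove the period map $P$ is an isomorphism in one stroke, via injectivity and the dimension count of Corollary~\ref{H^0 isomorphism}, with the Riemann bilinear relation doing the work once the contractible $\mathfrak{a}_{g+k}$-cycles have forced holomorphicity. Your route is cleaner for uniqueness and treats all $3g-3$ forms uniformly; the paper's route makes the pole locus explicit from the outset and does not require assembling a full symplectic basis of $H_1(\widetilde{\Sigma_{q_0}},\mathbb{R})$ to run the bilinear identity. The two arguments are logically equivalent, and your residue reading of the $\mathfrak{a}_{g+k}$-periods is exactly how Corollary~\ref{residues} extracts the values $\pm\tfrac{1}{2\pi\mathrm{i}}$.
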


\begin{proof}
We construct the basis for $q_0$; the construction for other $q \in \mathcal{U}$ is analogous. Note that $\left\{ \mathfrak{a}_i\big|_{q_0}, \mathfrak{b}_i\big|_{q_0} \right\}_{i=2}^g$ forms a basis of $H_1(\widetilde{\Sigma_{q_0}}, \mathbb{Z})^-$, and each $\mathfrak{a}_{g+k}\big|_{q_0}$ is contractible in $\widetilde{\Sigma_{q_0}}$ for $1\leq k \leq 2g-2$ by Corollary~\ref{contractible preimage}. Hence the result follows from Lemma~\ref{dual 1forms}.
\end{proof}

We now state the main theorem.
 
\begin{theorem}
\label{main B2g-2}
Let $\mathcal{B} = H^0(C, K_C^2)$ be the base of the $\mathrm{SL}_2(\mathbb{C})$-Hitchin system over a compact Riemann surface $C$ of genus $g \ge 2$, and let $\omega_{\mathrm{SK}}$ be the special K\"ahler metric on $\mathcal{B}^{\mathrm{reg}}$. For any $q_0 \in \mathcal{B}_{2g-2}$ with divisor as in \eqref{4e1},
\[
\operatorname{div}(q_0) = \sum_{k=1}^{2g-2} 2p_{2k-1}\big|_{q_0},
\]
there exist a simply connected neighborhood $\mathcal{U} \subseteq \mathcal{B}$ of $q_0$ and functions $\big\{ \mathfrak{z}^i, \mathfrak{w}_i \big\}_{i=2}^{3g-2}$ on $\mathcal{U}$ such that the special K\"ahler metric on $\mathcal{U} \cap \mathcal{B}^{\mathrm{reg}}$ is given by
\[
\omega_{\mathrm{SK}} = \frac{\mathrm{i}}{2} \sum_{i,j=2}^{3g-2} \Im(\tau_{ij}) \, \mathrm{d}\mathfrak{z}^i \wedge \mathrm{d}\bar{\mathfrak{z}}^j, 
\quad \tau_{ij}(q) = \frac{\partial \mathfrak{w}_i}{\partial \mathfrak{z}^j} = \int_{\mathfrak{b}_j|_{q}} \omega_i\big|_{q},
\]
where the $1$-forms $\omega_i$ are as defined in Lemma~\ref{dual 1forms 2g-2}. As $\mathcal{B}^{\mathrm{reg}} \ni q \to q_0$, the metric exhibits the following asymptotic behavior:
\[
\begin{aligned}
&\Im\left(\tau_{ij}(q)\right) \leq C < \infty \quad \text{for } (i, j) \notin \Big\{(g + k, g + k) \mid k = 1, \dots, 2g-2\Big\}; \\
&\Im\left(\tau_{g+k,g+k}(q)\right) \sim -\log \left|\mathfrak{z}^{g+k}(q)\right| \quad \text{for } k = 1, \dots, 2g-2.
\end{aligned}
\]
\end{theorem}

\begin{proof}
The functions $\big\{\mathfrak{z}^i, \mathfrak{w}_i\big\}_{i=2}^{3g-2}$ are defined on $\mathcal{U}$ as in \eqref{4e4}. Focusing on the fixed point $q_0$, according to Lemma~\ref{dual 1forms 2g-2}, each $1$-form $\omega_{i}\big|_{q_0}$ is either holomorphic or meromorphic with only simple poles. The integration contour $\mathfrak{b}_j\big|_{q_0}$ is disjoint from the poles of $\omega_{i}\big|_{q_0}$ for all $2 \le i,j \le 3g-2$, except for the cases where $i = j = g + k$ for $1\leq k \leq 2g-2$. Consequently, $\Im(\tau_{ij}(q_0))$ is finite for $(i, j) \notin \big\{ (g + k, g + k) \mid k = 1, \dots, 2g-2 \big\}$, whereas $\Im(\tau_{g+k, g+k}(q))$ diverges as $q \to q_0$.

In order to analyze the asymptotics of $\tau_{g+k,g+k}(q)$ for $q \in \mathcal{B}^{\mathrm{reg}} \cap \mathcal{U}$, we choose a local chart $\big(U \coloneqq \mathbb{D}_{2k-1}, z\big)$ on $C$ centered at $p_{2k-1}\big|_{q_0}$ such that
\[
q_0 = z^2 \, \mathrm{d}z^2, \qquad q = (z-\varepsilon_1)(z-\varepsilon_2) \, g\big|_{q}(z) \, \mathrm{d}z^2,
\]
where $g\big|_{q}(z)$ is a holomorphic function on $U$ that depends continuously on $q$. By Lemma~\ref{corr estimate} and Claim~\ref{log estimate}, as $\mathcal{B}^{\mathrm{reg}} \ni q \to q_0$, there exist uniform positive constants $C_1, C_2$ such that
\[
\begin{aligned}
-C_1 \log |\varepsilon_1 - \varepsilon_2| &< \Im\bigl( \tau_{g+k,g+k}(q) \bigr) < -C_2 \log |\varepsilon_1 - \varepsilon_2|, \\
-C_1 \log |\varepsilon_1 - \varepsilon_2| &< \bigl| \tau_{g+k,g+k}(q) \bigr| < -C_2 \log |\varepsilon_1 - \varepsilon_2|, \\
C_1 |\varepsilon_1 - \varepsilon_2|^2 &< \bigl| \mathfrak{z}^{g+k}(q) \bigr| < C_2 |\varepsilon_1 - \varepsilon_2|^2.
\end{aligned}
\]
Combining these estimates, we obtain the desired asymptotic behavior. The actual computations follow exactly the same procedure as in the proof of Theorem~\ref{main Bd}.
\end{proof}

We denote the special K\"ahler metric on $\mathcal{B}_{2g-2}$ by $\omega_{\mathrm{SK}, \mathcal{B}_{2g-2}}$, induced by the subintegrable system $\mathcal{M}_{2g-2} \to \mathcal{B}_{2g-2}$ described by Hitchin \cite{hitchin2019critical}. By identifying $T_{q_0}\mathcal{B}_{2g-2}$ with a subspace of $T_q\mathcal{B}$ for each $q \in \mathcal{U}$, we can restrict $\omega_{\mathrm{SK}}(q)$ to this subspace.

\begin{cor}
\label{metric B2g-2}
For any $q_0 \in \mathcal{B}_{2g-2}$, the following limit holds:
\[
\omega_{\mathrm{SK}, \mathcal{B}_{2g-2}}(q_0) = \lim_{\mathcal{B}^{\mathrm{reg}} \ni q \to q_0} \left( \omega_{\mathrm{SK}}(q)\big|_{T_{q_0}\mathcal{B}_{2g-2}} \right).
\]
\end{cor}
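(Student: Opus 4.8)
The plan is to mirror exactly the argument structure used in Corollary~\ref{metric Bd}, since the statement of Corollary~\ref{metric B2g-2} is the verbatim analogue for the stratum $\mathcal{B}_{2g-2}$. The goal is to show that the induced special K\"ahler metric $\omega_{\mathrm{SK},\mathcal{B}_{2g-2}}$ coincides with the restricted limit of the ambient metric $\omega_{\mathrm{SK}}$ to the fixed tangent subspace $T_{q_0}\mathcal{B}_{2g-2}$. First I would recall that, by item~(2) of Theorem~\ref{main B2g-2}, the stratum $\mathcal{B}_{2g-2}\cap\mathcal{U}$ is precisely the common zero locus $\big\{\mathfrak{z}^{g+k}=0 \mid k=1,\dots,2g-2\big\}$, so that the surviving coordinates indexed by the set $B:=\big\{2,\dots,g\big\}\cup\big\{3g-1,\dots,3g-2\big\}$ restrict to genuine coordinates on the stratum, with $\dim\mathcal{B}_{2g-2}=|B|=g-1$ matching Lemma~\ref{smooth Bd}.

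Second, I would establish the formula for $\omega_{\mathrm{SK},\mathcal{B}_{2g-2}}$ in these conjugate coordinates. The key observation is that, by construction in Subsections~\ref{step1 B2g-2} and~\ref{step2 B2g-2}, the contours $\big\{\mathfrak{a}_i\big|_q,\mathfrak{b}_i\big|_q\big\}_{i\in B}$ form a symplectic basis of $H_1\big(\widetilde{\Sigma_q},\mathbb{R}\big)^-$ for every $q\in\mathcal{U}\cap\mathcal{B}_{2g-2}$, since for such $q$ the cycles $\mathfrak{a}_{g+k}\big|_q$ remain contractible (they enclose the double zeros) and thus do not contribute to the homology of the anti-invariant part. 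Invoking He--Horn--Li \cite[Lemma 6.1]{he2025asymptotics} exactly as in the $\mathcal{B}_d$ case, the functions $\big\{\mathfrak{z}^i,\mathfrak{w}_i\big\}_{i\in B}$ constitute a conjugate special coordinate system on $\mathcal{B}_{2g-2}$, whence
\[
\omega_{\mathrm{SK},\mathcal{B}_{2g-2}} = \frac{\mathrm{i}}{2}\sum_{i,j\in B}\Im\big(\tau_{ij}\big)\,\mathrm{d}\mathfrak{z}^i\wedge\mathrm{d}\bar{\mathfrak{z}}^{\,j}.
\]

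Third, I would compute the left-hand side of the claimed identity, namely the restriction $\lim_{q\to q_0}\big(\omega_{\mathrm{SK}}(q)\big|_{T_{q_0}\mathcal{B}_{2g-2}}\big)$. Using the affine identification of all tangent spaces $T_q\mathcal{B}$ with a fixed vector space, restricting the full metric from Theorem~\ref{main B2g-2} to $T_{q_0}\mathcal{B}_{2g-2}$ annihilates all differentials $\mathrm{d}\mathfrak{z}^{g+k}$ for $k=1,\dots,2g-2$, leaving only the sum over $i,j\in B$. The essential point is that for these indices the coefficients $\Im(\tau_{ij}(q))$ extend continuously across $q_0$: by Theorem~\ref{main B2g-2}, the divergent behavior is confined to the diagonal entries $\tau_{g+k,g+k}$, which are exactly the terms eliminated by the restriction. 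Hence
\[
\lim_{\mathcal{B}^{\mathrm{reg}}\ni q\to q_0}\Big(\omega_{\mathrm{SK}}(q)\big|_{T_{q_0}\mathcal{B}_{2g-2}}\Big) = \frac{\mathrm{i}}{2}\sum_{i,j\in B}\Im\big(\tau_{ij}(q_0)\big)\,\mathrm{d}\mathfrak{z}^i\wedge\mathrm{d}\bar{\mathfrak{z}}^{\,j},
\]
and comparing with the formula from the previous paragraph yields the desired equality.

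The main obstacle I anticipate is verifying that the \emph{same} family of meromorphic one-forms $\big\{\omega_i\big|_q\big\}_{i\in B}$ normalized on $\widetilde{\Sigma_q}$ (Lemma~\ref{dual 1forms 2g-2}) limits correctly to the \emph{holomorphic} dual basis on the normalized curve $\widetilde{\Sigma_{q_0}}$ used to define the intrinsic metric $\omega_{\mathrm{SK},\mathcal{B}_{2g-2}}$. Concretely, one must confirm that for $i,j\in B$ the period integrals $\tau_{ij}(q)=\int_{\mathfrak{b}_j|_q}\omega_i\big|_q$ converge to $\tau_{ij}(q_0)$ computed on $\widetilde{\Sigma_{q_0}}$, which requires that the contours $\mathfrak{b}_j\big|_{q_0}$ avoid the simple poles of $\omega_i\big|_{q_0}$ for $i,j\in B$ --- this is guaranteed by the construction, since those poles sit at preimages of the double zeros while the relevant $\mathfrak{b}_j\big|_{q_0}$ are pulled back from loops disjoint from the branch data. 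Since all these facts are already in place from Theorem~\ref{main B2g-2} and Lemma~\ref{dual 1forms 2g-2}, the proof reduces to citing the parallel reasoning of Corollary~\ref{metric Bd}, and I would simply remark that the argument is identical.
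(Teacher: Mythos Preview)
Your proof is correct and follows essentially the same approach as the paper, which condenses everything to the single observation (Corollary~\ref{basis q0}) that $\{\mathfrak{a}_i|_q,\mathfrak{b}_i|_q\}_{i=2}^{g}$ is a symplectic basis of $H_1(\widetilde{\Sigma_q},\mathbb{R})^-$ for each $q\in\mathcal{B}_{2g-2}\cap\mathcal{U}$. Note that your index set $B=\{2,\dots,g\}\cup\{3g-1,\dots,3g-2\}$ has an empty second component (since $3g-1>3g-2$), so in fact $B=\{2,\dots,g\}$, which is precisely the range the paper uses.
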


\begin{proof}
It suffices to note that, according to Corollary~\ref{basis q0}, the collection of integration contours $\left\{ \mathfrak{a}_i\big|_{q}, \mathfrak{b}_i\big|_{q} \right\}_{i=2}^g$ forms a symplectic basis of $H_1\left(\widetilde{\Sigma_{q}}, \mathbb{Z}\right)^-$ for each $q \in \mathcal{B}_{2g-2} \cap \mathcal{U}$. 
\end{proof}

Denote by $\mathcal{K}_{2g-2}$ the K\"ahler potential of $\omega_{\mathrm{SK}, \mathcal{B}_{2g-2}}$, which is a globally defined function on $\mathcal{B}_{2g-2}$. The same argument as in Corollary~\ref{extension Bd} gives:

\begin{cor}
\label{extension B2g-2}
The K\"ahler potential $\mathcal{K}_0$ of $\omega_{\mathrm{SK}}$ admits a $C^1$-extension on $\mathcal{U}$ which coincides with $\mathcal{K}_{2g-2}$ on $\mathcal{B}_{2g-2}$.
\end{cor}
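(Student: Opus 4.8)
The plan is to follow the proof of Corollary~\ref{extension Bd} essentially verbatim, adjusting only the index ranges to the present stratum. On $\mathcal{U}\cap\mathcal{B}^{\mathrm{reg}}$ the global formula \eqref{1e2} expresses the potential in the conjugate coordinates \eqref{4e4} as
\[
\mathcal{K}(q)=\frac{\mathrm{i}}{4}\sum_{i=2}^{3g-2}\bigl(\mathfrak{z}^i\bar{\mathfrak{w}}_i-\mathfrak{w}_i\bar{\mathfrak{z}}^i\bigr),
\]
whereas, writing $B:=\{2,\dots,g\}$ (so $|B|=g-1=\dim\mathcal{B}_{2g-2}$), the subsystem potential is the partial sum $\mathcal{K}_{2g-2}(q)=\frac{\mathrm{i}}{4}\sum_{i\in B}(\mathfrak{z}^i\bar{\mathfrak{w}}_i-\mathfrak{w}_i\bar{\mathfrak{z}}^i)$. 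I would first record that, although $\{\mathfrak{w}_i\}_{i=g+1}^{3g-2}$ are multi-valued via \eqref{4e5}, the displayed expression for $\mathcal{K}$ is single-valued: substituting $\mathfrak{w}_i\mapsto\mathfrak{w}_i+n\mathfrak{z}^i$ leaves each summand unchanged because the two extra contributions $\pm\,n\,\mathfrak{z}^i\bar{\mathfrak{z}}^i$ cancel, consistent with $\mathcal{K}$ being the globally defined function of \eqref{1e2}.

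For the $C^0$-extension, item~(2) of Theorem~\ref{main B2g-2} realizes $\mathcal{B}_{2g-2}\cap\mathcal{U}$ as the common zero locus $\{\mathfrak{z}^{g+k}=0\}_{k=1}^{2g-2}$, while the functions $\{\mathfrak{w}_{g+k}\}$ stay bounded by construction. Hence every summand carrying a transverse index $i\in\{g+1,\dots,3g-2\}$ tends to zero as $\mathcal{B}^{\mathrm{reg}}\ni q\to q_0$, so $\mathcal{K}$ extends continuously across $\mathcal{B}_{2g-2}$ and its boundary value is precisely $\mathcal{K}_{2g-2}$.

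The $C^1$-statement is the substantive part. Differentiating yields, for $2\le j\le 3g-2$,
\[
\frac{\partial\mathcal{K}(q)}{\partial\mathfrak{z}^j}=\frac{\mathrm{i}}{4}\Bigl(\bar{\mathfrak{w}}_j-\sum_{i=2}^{3g-2}\tau_{ij}\,\bar{\mathfrak{z}}^i\Bigr),
\]
to be compared with the same expression summed only over $i\in B$. As established in the proof of Theorem~\ref{main B2g-2}, every coefficient $\tau_{ij}(q)$ with $(i,j)$ outside the transverse diagonal converges to a finite limit $\tau_{ij}(q_0)$, and only the diagonal transverse entries diverge, at the rate $|\tau_{g+k,g+k}|\sim-\log|\mathfrak{z}^{g+k}|$. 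Consequently each product $\tau_{ij}\bar{\mathfrak{z}}^i$ with $i\ne j$ converges (the factor $\bar{\mathfrak{z}}^i$ tending to $0$ precisely when $i$ is transverse), and the only dangerous term is $\tau_{jj}\bar{\mathfrak{z}}^j$ for $j=g+k$. Taming this divergence is the main obstacle, and it is defused by the elementary limit $\lim_{\mathfrak{z}^j\to0}\bar{\mathfrak{z}}^j\log|\mathfrak{z}^j|=0$, which forces $\tau_{jj}\bar{\mathfrak{z}}^j\to0$. Thus $\partial\mathcal{K}/\partial\mathfrak{z}^j$ extends continuously across $\mathcal{B}_{2g-2}$ for every $j$; since $\mathcal{K}$ is real, the conjugate partials extend as well, giving the $C^1$-extension. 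Finally, for tangential $j\in B$ the transverse $\bar{\mathfrak{z}}^i$ vanish on $\mathcal{B}_{2g-2}$, so the limit reduces to $\frac{\mathrm{i}}{4}(\bar{\mathfrak{w}}_j-\sum_{i\in B}\tau_{ij}\bar{\mathfrak{z}}^i)=\partial\mathcal{K}_{2g-2}/\partial\mathfrak{z}^j$, identifying the extension with $\mathcal{K}_{2g-2}$.
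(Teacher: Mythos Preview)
Your proposal is correct and follows essentially the same approach as the paper, which simply refers back to the proof of Corollary~\ref{extension Bd} with the index set $B=\{2,\dots,g\}$ in place of $\{1,\dots,g\}\cup\{g+d+1,\dots,3g-3\}$. Your explicit check that the monodromy \eqref{4e5} leaves the expression for $\mathcal{K}$ invariant is a welcome clarification not spelled out in the original.
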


Take the complex line $\mathcal{L}_{q_0} \coloneqq \big\{ l \cdot q_0 \mid l \in \mathbb{C} \big\} \subseteq \mathcal{B}_{2g-2}$, naturally parametrized by $l \in \mathbb{C}$. Let $\omega_{\mathcal{L}_{q_0}}$ denote the restriction of the metric $\omega_{\mathrm{SK}, \mathcal{B}_{2g-2}}$ to this line. We have:

\begin{cor}
\label{radial metric B2g-2}
The form $\omega_{\mathcal{L}_{q_0}}$ induces a flat metric with a cone singularity of angle $\pi$ at the origin on $\mathcal{L}_{q_0} \setminus \{0\}$. More precisely, we have
\[
\omega_{\mathcal{L}_{q_0}} = \frac{\mathrm{i}}{2} C_0 |l|^{-1} \mathrm{d}l \wedge \mathrm{d}\bar{l}, \quad \text{where} \quad C_0 = \frac{1}{4} \int_C \sqrt{\big.q_0 \bar{q}_0}.
\]
The associated K\"ahler potential for $\omega_{\mathcal{L}_{q_0}}$ is given by $2C_0 |l|$.
\end{cor}

\section{Special K\"ahler Metric Singularities on $\mathcal{B}_{\mathrm{ab}}$}

In this section, we investigate the asymptotic behavior of the special K\"ahler metric in the vicinity of the stratum $\mathcal{B}_{\mathrm{ab}}$. We fix a quadratic differential $q_0 \in \mathcal{B}_{\mathrm{ab}}$ satisfying
\begin{equation}
\label{5e1}
\operatorname{div}(\psi) = \sum_{k=1}^{2g-2} p_{2k-1}\big|_{q_0} \quad \text{and} \quad \operatorname{div}(q_0) = 2 \cdot \operatorname{div}(\psi).
\end{equation}
for some $\psi \in H^0(C, K_C)$. Let $\mathcal{U} \subseteq \mathcal{B}$ be a small, simply connected neighborhood of $q_0$. In this setting, the spectral cover $\pi: \widetilde{\Sigma_{q_0}} \to C$ is trivial, i.e., $\widetilde{\Sigma_{q_0}} \cong C \sqcup C$. 

The remainder of this section is organized as follows. Subsection~\ref{step1-2 Bab} implements Steps~(1) and (2) of Construction~\ref{construction} by constructing $3g-3$ pairs of curves on $\widetilde{\Sigma_{q_0}}$ with prescribed intersection numbers and deforming them to the spectral fibers $\widetilde{\Sigma_q}$ for $q \in \mathcal{U}$. Finally, Subsection~\ref{results Bab} establishes the main asymptotic results.

\subsection{Integration Contours and Their Deformations}
\label{step1-2 Bab}

Let $\big\{\alpha_i, \beta_i\big\}_{i=1}^g$ be a fixed symplectic basis of $H_1(C, \mathbb{Z})$ and denote the preimages of $\alpha_i$ (resp. $\beta_i$) on $\widetilde{\Sigma_{q_0}}$ by $a_{i,1}\big|_{q_0}, a_{i,2}\big|_{q_0}$ (resp. $b_{i,1}\big|_{q_0}, b_{i,2}\big|_{q_0}$). While Lemma~\ref{Eigenspace} is not directly applicable because $\widetilde{\Sigma_{q_0}}$ is disconnected, the $\pm 1$-eigenspaces of the homology groups admit the following explicit descriptions:

\begin{lemma}
\label{symplectic basis ab}
The collections $\left\{\mathfrak{a}_i\big|_{q_0}, \mathfrak{b}_i\big|_{q_0}\right\}_{i=1}^g$ and $\left\{\mathfrak{c}_i\big|_{q_0}, \mathfrak{d}_i\big|_{q_0}\right\}_{i=1}^g$ form symplectic bases of $H_1\left(\widetilde{\Sigma_{q_0}}, \mathbb{R}\right)^-$ and $H_1\left(\widetilde{\Sigma_{q_0}}, \mathbb{R}\right)^+$, respectively, where
\begin{align*}
\mathfrak{a}_i\big|_{q_0} &\coloneqq \frac{\sqrt{2}}{2} \left(a_{i,1}\big|_{q_0} - a_{i,2}\big|_{q_0}\right), & \mathfrak{b}_i\big|_{q_0} &\coloneqq \frac{\sqrt{2}}{2} \left(b_{i,1}\big|_{q_0} - b_{i,2}\big|_{q_0}\right), \\
\mathfrak{c}_i\big|_{q_0} &\coloneqq \frac{\sqrt{2}}{2} \left(a_{i,1}\big|_{q_0} + a_{i,2}\big|_{q_0}\right), &
\mathfrak{d}_i\big|_{q_0} &\coloneqq \frac{\sqrt{2}}{2} \left(b_{i,1}\big|_{q_0} + b_{i,2}\big|_{q_0}\right).
\end{align*}
\end{lemma}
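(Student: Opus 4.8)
The plan is to exploit the fact that $\widetilde{\Sigma_{q_0}} \cong C \sqcup C$, so that $H_1\left(\widetilde{\Sigma_{q_0}}, \mathbb{R}\right)$ splits as a direct sum $H_1(C,\mathbb{R}) \oplus H_1(C,\mathbb{R})$, one summand per sheet, orthogonal with respect to the intersection form, and the sheet-exchange involution $\tau$ interchanges the two summands. First I would record the action of $\tau_*$ on the basic classes: since $\tau$ swaps the two copies of $C$, it sends $a_{i,1}\big|_{q_0} \leftrightarrow a_{i,2}\big|_{q_0}$ and $b_{i,1}\big|_{q_0} \leftrightarrow b_{i,2}\big|_{q_0}$. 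From this it follows immediately that $\tau_*\left(\mathfrak{a}_i\big|_{q_0}\right) = -\mathfrak{a}_i\big|_{q_0}$ and $\tau_*\left(\mathfrak{b}_i\big|_{q_0}\right) = -\mathfrak{b}_i\big|_{q_0}$, so both lie in $H_1\left(\widetilde{\Sigma_{q_0}},\mathbb{R}\right)^-$; likewise $\tau_*\left(\mathfrak{c}_i\big|_{q_0}\right) = \mathfrak{c}_i\big|_{q_0}$ and $\tau_*\left(\mathfrak{d}_i\big|_{q_0}\right) = \mathfrak{d}_i\big|_{q_0}$, placing these in $H_1\left(\widetilde{\Sigma_{q_0}},\mathbb{R}\right)^+$.

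Next I would compute the intersection pairings. The essential observation is that classes supported on different sheets have vanishing intersection number, so the intersection form is block-diagonal with respect to the sheet decomposition; moreover, since $\widetilde{\pi}$ restricts to an orientation-preserving biholomorphism on each sheet, the pairing within one sheet reproduces the intersection form of $\big\{\alpha_i, \beta_i\big\}$ on $C$, namely $\left\langle a_{i,s}\big|_{q_0}, b_{j,t}\big|_{q_0}\right\rangle = \delta_{st}\delta_{ij}$ and $\left\langle a_{i,s}\big|_{q_0}, a_{j,t}\big|_{q_0}\right\rangle = \left\langle b_{i,s}\big|_{q_0}, b_{j,t}\big|_{q_0}\right\rangle = 0$. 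Expanding $\left\langle \mathfrak{a}_i\big|_{q_0}, \mathfrak{b}_j\big|_{q_0}\right\rangle$ and $\left\langle \mathfrak{c}_i\big|_{q_0}, \mathfrak{d}_j\big|_{q_0}\right\rangle$ via these relations, the cross-sheet terms drop out and the two surviving same-sheet terms each contribute $\delta_{ij}$; the normalization $\tfrac{\sqrt{2}}{2}$ supplies exactly the factor $\tfrac12$ needed to yield $\delta_{ij}$. The analogous expansions show that all remaining pairings among the $\mathfrak{a}$'s and $\mathfrak{b}$'s (respectively among the $\mathfrak{c}$'s and $\mathfrak{d}$'s) vanish, so each collection satisfies the symplectic relations \eqref{3e3}.

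Finally I would invoke a dimension count: the eigenspace dimensions recorded just above give $\dim_{\mathbb{R}} H_1\left(\widetilde{\Sigma_{q_0}}, \mathbb{R}\right)^\pm = 2g$, and each collection consists of $2g$ vectors lying in the appropriate eigenspace. Since a set of vectors satisfying nondegenerate symplectic relations is automatically linearly independent, each collection is a basis of its eigenspace, and by construction a symplectic one. I expect no serious obstacle; the only point demanding care is the orientation bookkeeping, namely confirming that both sheets inherit the complex orientation from $C$ through $\widetilde{\pi}$, so that the within-sheet intersection numbers agree in sign with those on $C$ rather than one sheet contributing with reversed sign, which would spoil the identities $\left\langle \mathfrak{a}_i\big|_{q_0}, \mathfrak{b}_j\big|_{q_0}\right\rangle = \left\langle \mathfrak{c}_i\big|_{q_0}, \mathfrak{d}_j\big|_{q_0}\right\rangle = \delta_{ij}$.
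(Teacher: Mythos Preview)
Your proposal is correct and complete. The paper itself states this lemma without proof, treating it as an immediate consequence of the splitting $\widetilde{\Sigma_{q_0}} \cong C \sqcup C$ and the eigenspace dimensions recorded just before the lemma; your argument spells out exactly those implicit steps.
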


Let $\mathbb{D} \subseteq C$ be an embedded disc disjoint from $\big\{\alpha_i, \beta_i\big\}_{i=1}^g$ and containing $\operatorname{Zero}(q_0)$. We first construct a collection of curves on $C$ as follows:
\begin{enumerate}
    \item For $1 \leq k \leq 2g-2$, let $\ell_{2k-1}\big|_{q_0} \coloneqq \partial \mathbb{D}_{2k-1}\big|_{q_0}$ where $\mathbb{D}_{2k-1}\big|_{q_0} \subseteq \mathbb{D}$ is a small disc centered at $p_{2k-1}\big|_{q_0}$, shown as red loops in Figure~4a.

    \item For $1 \leq k \leq 2g-3$, let $\ell_{2k}\big|_{q_0} \subset \mathbb{D}$ be an oriented simple curve from $p_{2k-1}\big|_{q_0}$ to $p_{2k+1}\big|_{q_0}$, shown as blue curves in Figure~4a.

    \item Each curve $\ell_{2k}\big|_{q_0}$ intersects $\ell_{2k-1}\big|_{q_0}$ and $\ell_{2k+1}\big|_{q_0}$ transversely at exactly one point.
\end{enumerate}

\begin{figure}[htbp]
    \centering
    \includegraphics[width=0.8\linewidth]{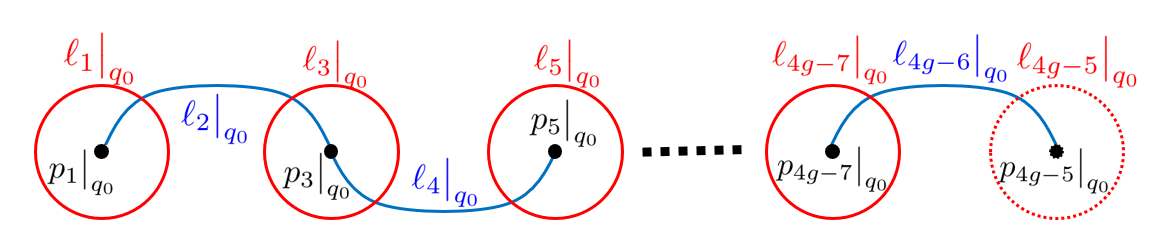}
    \caption*{Figure 4a: Configuration of $\ell_{2k-1}\big|_{q_0}$ (red) and $\ell_{2k}\big|_{q_0}$ (blue) on $C$.}
\end{figure}

Let $a_{g+k}\big|_{q_0}$ be a loop in the preimage of $\ell_{2k-1}\big|_{q_0}$ contained in a fixed connected component of $\widetilde{\Sigma_{q_0}}$, represented by the red loops in Figure~4b. Similarly, let $b_{g+k,1}\big|_{q_0}$ and $b_{g+k,2}\big|_{q_0}$ be the preimages of $\ell_{2k}\big|_{q_0}$, each inheriting its orientation from $\ell_{2k}\big|_{q_0}$, such that $b_{g+k,1}\big|_{q_0}$ lies in the same component as $a_{g+k}\big|_{q_0}$, also shown in blue in Figure~4b. By equipping each loop $a_{g+k}\big|_{q_0}$ with a suitable orientation, their intersection relations are
\[
\left\langle a_{g+k}\big|_{q_0}, b_{g+k,1}\big|_{q_0} \right\rangle = 1 \quad \text{and} \quad \left\langle a_{g+k+1}\big|_{q_0}, b_{g+k,1}\big|_{q_0} \right\rangle = -1, \quad \text{for } 1 \leq k \leq 2g-3.
\]

\begin{figure}[htbp]
    \centering
    \includegraphics[width=0.6\linewidth]{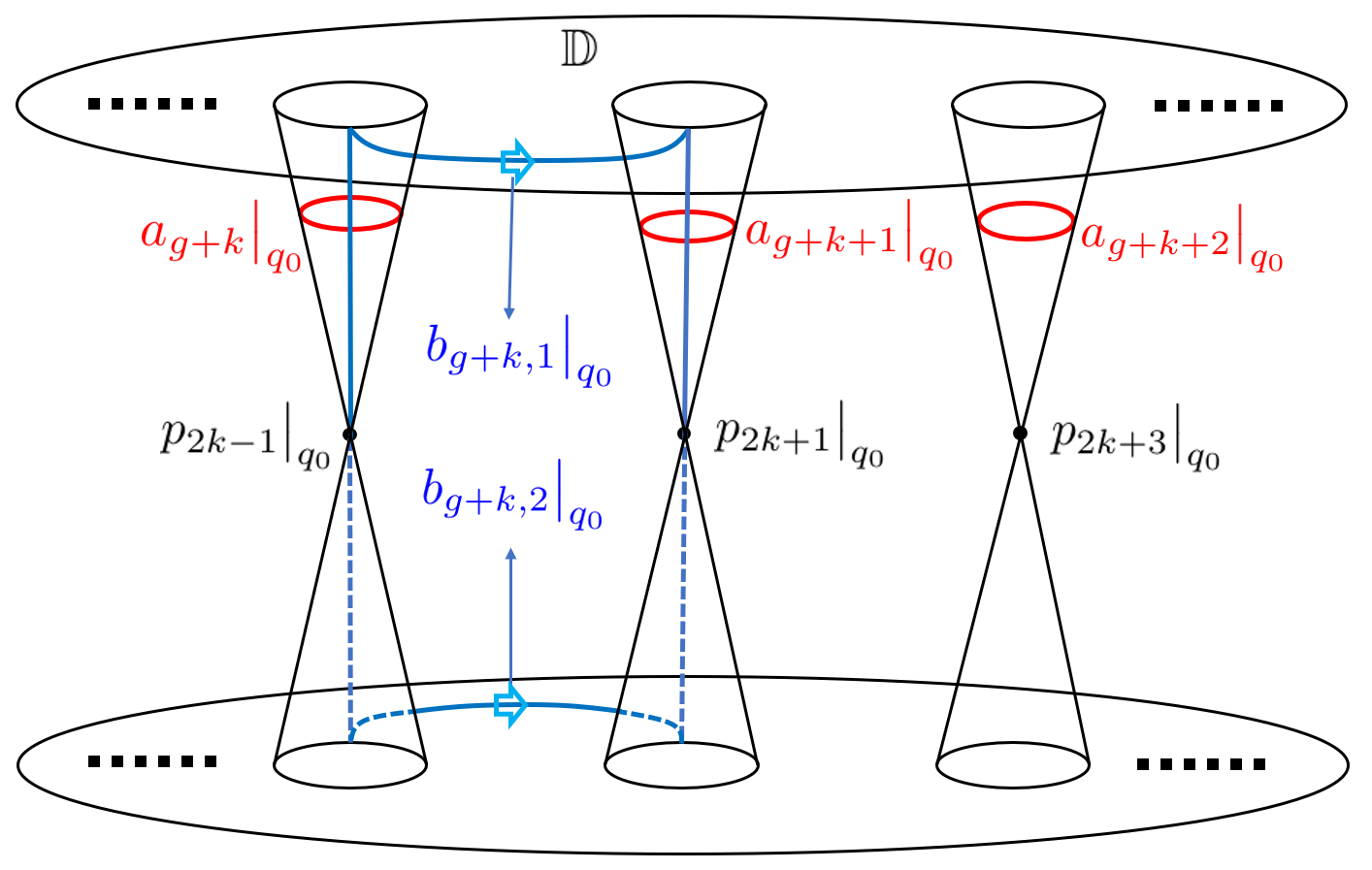}
    \caption*{Figure 4b: Arcs $a_{g+k}\big|_{q_0}$ (red) and $b_{g+k,1}\big|_{q_0}, b_{g+k,2}\big|_{q_0}$ (blue) on $\widetilde{\Sigma_{q_0}}$.}
\end{figure}

We define the curves on $\widetilde{\Sigma_{q_0}}$ as follows:
\begin{equation}
\label{5e2}
\mathfrak{a}_{g+k}\big|_{q_0} \coloneqq \sum_{j=1}^{k} a_{g+j}\big|_{q_0}, \quad
\mathfrak{b}_{g+k}\big|_{q_0} \coloneqq b_{g+k,1}\big|_{q_0} - b_{g+k,2}\big|_{q_0}, \quad \text{for } 1 \leq k \leq 2g-3.
\end{equation}
In conjunction with the collection $\left\{\mathfrak{a}_i\big|_{q_0}, \mathfrak{b}_i\big|_{q_0}\right\}_{i=1}^g$ established in Lemma~\ref{symplectic basis ab}, we obtain $3g-3$ pairs of curves that satisfy the symplectic intersection relations:
\[
\left\langle \mathfrak{a}_i\big|_{q_0}, \mathfrak{b}_j\big|_{q_0} \right\rangle = \delta_{ij}, \quad \left\langle \mathfrak{a}_i\big|_{q_0}, \mathfrak{a}_j\big|_{q_0} \right\rangle = 0, \qquad \text{for } 1 \leq i, j \leq 3g-3.
\]

Shrinking $\mathcal{U}$ if necessary, we assume that each $q \in \mathcal{U}$ possesses either two simple zeros or one double zero on $\mathbb{D}_{2k-1}$ for $1 \leq k \leq 2g-2$. The deformation of the collection $\left\{\mathfrak{a}_i\big|_{q_0}, \mathfrak{b}_i\big|_{q_0}\right\}_{i=1}^{3g-3}$ to the spectral fiber $\widetilde{\Sigma_q}$ for $q \in \mathcal{U} \cap \mathcal{B}^{\mathrm{reg}}$ is performed as follows.
\begin{enumerate}
    \item The deformation of the first $g$ pairs of cycles is straightforward. Recall that $\big\{\alpha_i, \beta_i\big\}_{i=1}^g$ is the symplectic basis of $H_1(C, \mathbb{Z})$ fixed earlier. For each $q \in \mathcal{U}$, we define:
    \[
    \mathfrak{a}_i\big|_{q} \coloneqq \frac{\sqrt{2}}{2} \left(a_{i,1}\big|_{q} - a_{i,2}\big|_{q}\right), \quad \mathfrak{b}_i\big|_{q} \coloneqq \frac{\sqrt{2}}{2} \left(b_{i,1}\big|_{q} - b_{i,2}\big|_{q}\right), \qquad 1 \le i \le g,
    \]
    where $a_{i,1}\big|_{q}, a_{i,2}\big|_{q}$ and $b_{i,1}\big|_{q}, b_{i,2}\big|_{q}$ denote the two preimages of $\alpha_i$ and $\beta_i$, respectively, under the spectral cover $\pi: \widetilde{\Sigma_{q}} \to C$.

    \item For $1 \leq k \leq 2g-2$, we simply set
    \[
    \mathbb{D}_{2k-1}\big|_q \coloneqq \mathbb{D}_{2k-1}\big|_{q_0}, \quad \ell_{2k-1}\big|_q \coloneqq \ell_{2k-1}\big|_{q_0} = \partial \mathbb{D}_{2k-1}\big|_{q_0}.
    \]
    Since these discs and loops remain fixed, we shall omit the subscript $q$ in the subsequent notation. For each $q \in \mathcal{U}$, take $a_{g+k}\big|_q$ in the preimage of $\ell_{2k-1}$ via $\widetilde{\pi}: \widetilde{\Sigma_q} \to C$, varying continuously from $a_{g+k}\big|_{q_0}$, and define $\mathfrak{a}_{g+k}\big|_q$ using the same formula as in \eqref{5e2}.

    \item For $1 \leq k \leq 2g-3$, we define, for each $q \in \mathcal{U}$, a family of curves $\left(\ell_{2k}\big|_{q}\right)$ on $C$ (rather than a single curve) and refer to it as the deformation of $\ell_{2k}\big|_{q_0}$. Outside the discs $\mathbb{D}_{2k-1}$ and $\mathbb{D}_{2k+1}$, we simply set $\ell_{2k}\big|_{q} = \ell_{2k}\big|_{q_0}$. Let $s_k$ (resp.\ $t_k$) be the intersection point of $\ell_{2k}\big|_{q_0}$ with $\ell_{2k-1}$ (resp.\ $\ell_{2k+1}$), as marked in Figure~4c. Within these discs, the definition of $\ell_{2k}\big|_q$ depends on the configuration of the zeros:
    \begin{enumerate}
        \item If $q$ possesses a double zero in $\mathbb{D}_{2k-1}$ (resp.\ $\mathbb{D}_{2k+1}$), we define $\ell_{2k}\big|_q$ as a simple curve joining $s_k$ (resp.\ $t_k$) to this double zero.

        \item If $q$ possesses two simple zeros in $\mathbb{D}_{2k-1}$ (resp.\ $\mathbb{D}_{2k+1}$), we define $\ell_{2k}\big|_q$ as a simple curve joining $s_k$ (resp.\ $t_k$) to one of these zeros while avoiding the other.
    \end{enumerate}
    Without loss of generality, we further assume that $\ell_{2k}\big|_q$ and $\ell_{2k+2}\big|_q$ remain disjoint within $\mathbb{D}_{2k+1}$ when $q$ has two simple zeros therein. This configuration is indicated by the orange dashed arcs in Figure~4c.
    \begin{figure}[htbp]
    \centering
    \includegraphics[width=0.8\linewidth]{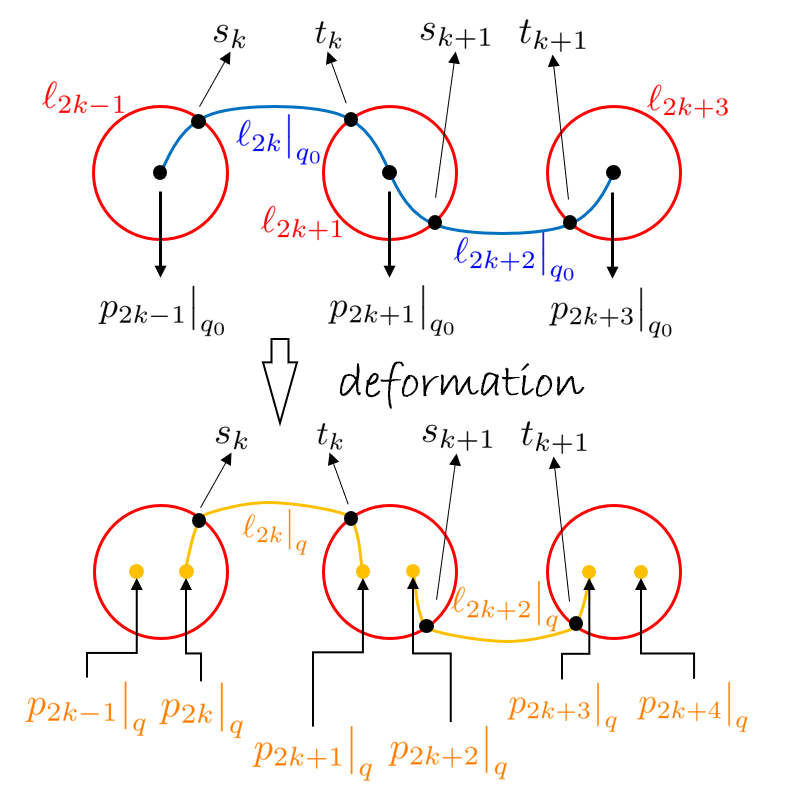}
   \caption*{Figure 4c: Configuration of $\ell_{2k}\big|_{q}$ on $C$.}
    \end{figure}

    The preimage of $\ell_{2k}\big|_q$ consists of two oriented arcs, $b_{g+k,1}\big|_{q}$ and $b_{g+k,2}\big|_{q}$ on $\widetilde{\Sigma_{q}}$. We then define $\mathfrak{b}_{g+k}\big|_q$ using the same formula as in \eqref{5e2}, which is depicted in Figure~4d. 
    \begin{figure}[htbp]
    \centering
\includegraphics[width=0.6\linewidth]{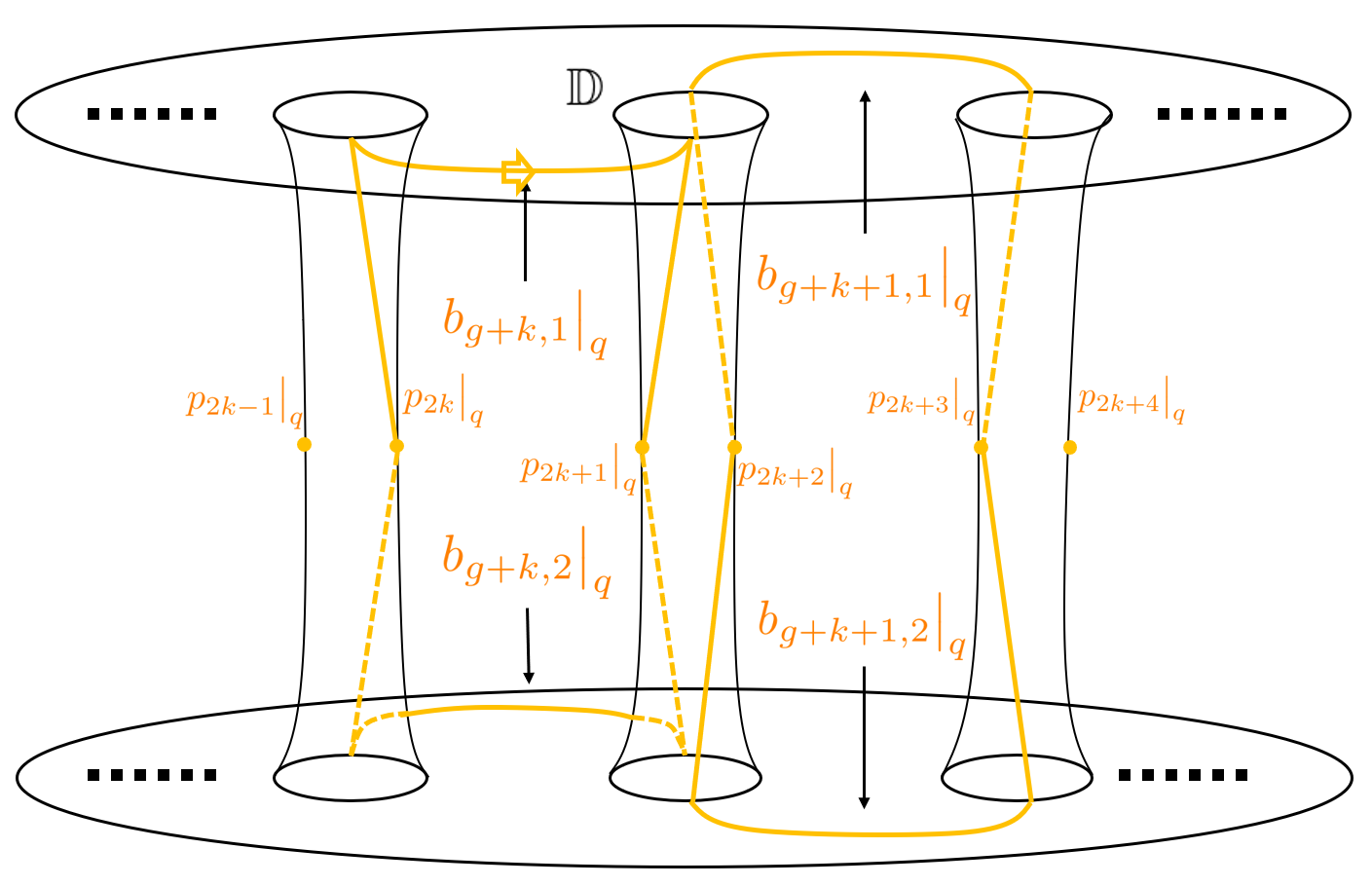}
    \caption*{Figure 4d: Configuration of $b_{g+k,1}\big|_{q}$ and $b_{g+k,1}\big|_{q}$  on $\widetilde{\Sigma_{q}}$.}
    \end{figure}
\end{enumerate}

For $q \in \mathcal{U} \cap \mathcal{B}^{\mathrm{reg}}$, the construction of $\ell_{2k}\big|_q$ is not unique; distinct choices, such as $\ell_{2k}\big|_q$ and $\ell'_{2k}\big|_{q}$, yield different $\mathfrak{b}_{g+k}\big|_q$ and $\mathfrak{b}'_{g+k}\big|_{q}$. When viewed as elements of $H_1\big(\widetilde{\Sigma_{q}}, \mathbb{R}\big)^-$, they satisfy
\begin{equation}
\label{5e3}
\mathfrak{b}'_{g+k}\big|_{q} = \mathfrak{b}_{g+k}\big|_{q} + m_k \cdot a_{g+k}\big|_{q} + n_k \cdot a_{g+k+1}\big|_{q}, \quad m_k, n_k \in \mathbb{Z}.
\end{equation}
In this case, one can verify that the collection $\left\{\mathfrak{a}_i\big|_{q}, \mathfrak{b}_i\big|_{q}\right\}_{i=1}^{3g-3}$ constitutes a basis for $H_1\left(\widetilde{\Sigma_q}, \mathbb{R}\right)^-$.

\subsection{Main Results for the $\mathcal{B}_{\mathrm{ab}}$ Case}
\label{results Bab}

For $1 \leq i \leq 3g-2$ and $1 \leq j \leq 3g-3$, we define the following functions on $\mathcal{U}$:
\begin{equation}
\label{5e4}
\mathfrak{y}^i(q) \coloneqq \int_{a_i|_q} \theta\big|_{\widetilde{\Sigma_q}}, \qquad
\mathfrak{z}^i(q) \coloneqq \int_{\mathfrak{a}_i|_q} \theta\big|_{\widetilde{\Sigma_q}}, \qquad
\mathfrak{w}_j(q) \coloneqq - \int_{\mathfrak{b}_j|_q} \theta\big|_{\widetilde{\Sigma_q}},
\end{equation}
where the functions $\big\{\mathfrak{w}_j\big\}_{j=g+1}^{3g-3}$ are multi-valued analytic on $\mathcal{B}^{\mathrm{reg}} \cap \mathcal{U}$ because the homology classes $\mathfrak{b}_j\big|_q$ are not uniquely determined; in contrast, the remaining functions are single-valued and holomorphic on $\mathcal{U}$. According to \eqref{5e3}, the branches of each $\mathfrak{w}_i$ for $g+1 \le i \le 3g-3$ are related by
\begin{equation}
\label{5e5}
\mathfrak{w}'_i(q) = \mathfrak{w}_i(q) + m_i \cdot \mathfrak{y}^{i}(q) + n_i \cdot \mathfrak{y}^{i+1}(q), \quad m_i, n_i \in \mathbb{Z}.
\end{equation}
Furthermore, by the construction in \eqref{5e2}, we have 
\begin{equation}
\label{5e7}
\mathfrak{z}^{g+k}(q) = \sum_{j=1}^{k} \mathfrak{y}^{g+j}(q), \quad \text{for } 1 \leq k \leq 2g-2.
\end{equation}

\begin{lemma}
\label{sum zero}
For each $q \in \mathcal{U}$, we have $\mathfrak{z}^{3g-2}(q) = \sum_{k=1}^{2g-2} \mathfrak{y}^{g+k}(q) = 0$.
\end{lemma}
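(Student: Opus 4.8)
The plan is to realize $\mathfrak{z}^{3g-2}(q)$ as the integral of the closed $1$-form $\theta\big|_{\widetilde{\Sigma_q}}$ over a null-homologous cycle. By the definition \eqref{5e4} and the relation $\mathfrak{z}^{g+i}(q) = \sum_{j=1}^{i}\mathfrak{y}^{g+j}(q)$ recorded just before the statement, we have
\[
\mathfrak{z}^{3g-2}(q) = \sum_{k=1}^{2g-2}\mathfrak{y}^{g+k}(q) = \int_{\mathfrak{a}_{3g-2}|_q}\theta\big|_{\widetilde{\Sigma_q}},
\qquad
\mathfrak{a}_{3g-2}\big|_q = \sum_{k=1}^{2g-2} a_{g+k}\big|_q,
\]
where each $a_{g+k}\big|_q$ is the chosen lift of the small circle $\ell_{2k-1} = \partial\mathbb{D}_{2k-1}$. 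Since $\theta\big|_{\widetilde{\Sigma_q}}$ is holomorphic (Lemma~\ref{holomorphic theta}) and hence closed, it suffices to prove that the cycle $\mathfrak{a}_{3g-2}\big|_q$ is null-homologous on $\widetilde{\Sigma_q}$ for every $q\in\mathcal{U}$.

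The key geometric input is that the double cover $\widetilde{\pi}\colon\widetilde{\Sigma_q}\to C$ splits over the complement $R := C\setminus\bigcup_{k=1}^{2g-2}\mathbb{D}_{2k-1}$. I would verify this by examining the monodromy homomorphism $\phi\colon H_1(R,\mathbb{Z})\to\mathbb{Z}_2$. The boundary classes $[\ell_{2k-1}]$ map to $0$: by the assumption after \eqref{5e1}, each $\mathbb{D}_{2k-1}$ contains either one double zero (which, having even order, is not a branch point) or two simple zeros (two branch points, whose combined monodromy is trivial), so in either case $\phi([\ell_{2k-1}])=0$ and $\widetilde{\pi}^{-1}(\ell_{2k-1})$ is a pair of disjoint loops. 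The handle classes $[\alpha_i],[\beta_i]$, which are disjoint from $\mathbb{D}\supseteq\bigcup_k\mathbb{D}_{2k-1}$, map to $0$ by continuity from $q_0$: at $q_0$ the cover is globally trivial ($\widetilde{\Sigma_{q_0}}\cong C\sqcup C$), and since $\phi$ takes values in the discrete group $\mathbb{Z}_2$ and all branch points stay inside $\mathbb{D}$, after shrinking $\mathcal{U}$ these monodromies remain trivial for all $q\in\mathcal{U}$. Hence $\widetilde{\pi}^{-1}(R)=S_1\sqcup S_2$ with each $S_i\cong R$.

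With the splitting in hand, I would take $S_1$ to be the sheet containing all the selected loops $a_{g+k}\big|_q$; this is consistent with the construction, which requires the $a_{g+k}\big|_{q_0}$ to lie in one component of $\widetilde{\Sigma_{q_0}}$ and to deform continuously. Then $S_1$ is a subsurface of $\widetilde{\Sigma_q}$ whose boundary is the lift of $\partial R = -\sum_{k}\ell_{2k-1}$, namely $\partial S_1 = -\mathfrak{a}_{3g-2}\big|_q$ after matching the orientations fixed in the construction. Consequently $\mathfrak{a}_{3g-2}\big|_q = -\partial S_1$ bounds a $2$-chain and is null-homologous, so integrating the closed form $\theta\big|_{\widetilde{\Sigma_q}}$ gives $\mathfrak{z}^{3g-2}(q)=0$; equivalently, one may phrase the last step as a direct application of Stokes' theorem on $S_1$ using $\mathrm{d}\bigl(\theta\big|_{\widetilde{\Sigma_q}}\bigr)=0$. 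I expect the main obstacle to be the uniformity and orientation bookkeeping in the middle step: establishing that the cover genuinely splits over $R$ for \emph{all} $q\in\mathcal{U}$ (not merely $q\in\mathcal{U}\cap\mathcal{B}^{\mathrm{reg}}$), and checking that the orientation conventions chosen for the individual loops $a_{g+k}\big|_q$ really assemble $\mathfrak{a}_{3g-2}\big|_q$ into $\pm\partial S_1$ rather than some orientation-mismatched combination. Should the orientation matching prove delicate at isolated $q$, I would instead note that $\mathfrak{z}^{3g-2}$ is analytic on $\mathcal{U}$ and vanishes on the dense open subset $\mathcal{U}\cap\mathcal{B}^{\mathrm{reg}}$, hence vanishes identically.
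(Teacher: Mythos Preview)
Your proposal is correct and follows essentially the same approach as the paper's proof: both show that the cycle $\sum_{k=1}^{2g-2} a_{g+k}\big|_q$ is null-homologous on $\widetilde{\Sigma_q}$, so the integral of the closed form $\theta\big|_{\widetilde{\Sigma_q}}$ over it vanishes. The paper phrases this as ``homologous to a component of $\widetilde{\pi}^{-1}(\partial\mathbb{D})$, which is null-homologous,'' while you construct the bounding surface $S_1$ directly and supply the monodromy verification that the paper leaves implicit; your version is in fact more complete.
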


\begin{proof}
By definition, the sum $\sum_{k=1}^{2g-2} \mathfrak{y}^{g+k}$ equals the integral of $\theta$ along the cycle $\sum_{k=1}^{2g-2} a_{g+k}\big|_q$. This cycle is homologous to a component of $\pi^{-1}(\partial\mathbb{D})$, where $\mathbb{D}$ is a region containing all branch points.  The integral vanishes identically since $\pi^{-1}(\partial\mathbb{D})$ is contractible.
\end{proof}

\begin{prop}
\label{zero locus Bab}
For a sufficiently small neighborhood $\mathcal{U}$, the intersection $\mathcal{B}_{\mathrm{ab}} \cap \mathcal{U}$ is the common zero locus of $\big\{\mathfrak{z}^{g+k}\big\}_{k=1}^{2g-2}$ or, equivalently, $\big\{\mathfrak{y}^{g+k}\big\}_{k=1}^{2g-2}$. This intersection is a $g$-dimensional complex submanifold of $\mathcal{B}$. Its tangent space at $q_0$ is $T_{q_0}\mathcal{B}_{\mathrm{ab}} \cong H^0(C, K_C^2(-D_{q_0}))$.
\end{prop}

\begin{proof}
By identifying $\mathcal{B}_{\mathrm{ab}}$ locally with an open subset of $H^0(C, K_C)$, it follows that $\mathcal{B}_{\mathrm{ab}}$ is a complex manifold of dimension $g$. By Lemma~\ref{corr estimate}, $\mathcal{B}_{\mathrm{ab}} \cap \mathcal{U}$ is precisely the common zero locus of $\big\{\mathfrak{y}^{g+k}\big\}_{k=1}^{2g-2}$, which coincides with that of $\big\{\mathfrak{z}^{g+k}\big\}_{k=1}^{2g-2}$ by \eqref{5e7}.
\end{proof}

Let $\widetilde{p}_{2k-1,1}\big|_{q_0}$ and $\widetilde{p}_{2k-1,2}\big|_{q_0}$ be the two preimages of $p_{2k-1}$ on $\widetilde{\Sigma_{q_0}}$. Assume that $\widetilde{p}_{2k-1,1}\big|_{q_0}$ lies inside the interior of $a_{g+k}\big|_{q_0}$, while $\widetilde{p}_{2k-1,2}\big|_{q_0}$ lies in the other connected component.

\begin{lemma}
\label{dual 1forms Bab}
For each $q \in \mathcal{U}$, there exists a unique basis $\left\{\omega_j\big|_q\right\}_{j=1}^{3g-3}$ of $H^0\left(\widetilde{\Sigma_q}, K_{\widetilde{\Sigma_q}}\left(\widetilde{D_q}\right)\right)^-$ satisfying the duality:
\[
\int_{\mathfrak{a}_i|_q} \omega_j\big|_q = \delta_{ij}, \quad 1 \leq i,j \leq 3g-3.
\]
On $\widetilde{\Sigma_{q_0}}$, the form $\omega_i\big|_{q_0}$ is holomorphic for $1 \le i \le g$. For $g+1 \le i \le 3g-3$, it is meromorphic with simple poles and residues as follows:
\[
\begin{array}{c|cccccc}
\mathrm{Res} & \widetilde{p}_{1,1} & \widetilde{p}_{3,1} & \widetilde{p}_{5,1} & \cdots & \widetilde{p}_{4g-7,1} & \widetilde{p}_{4g-5,1} \\
\hline
\omega_{g+1} & + & - & 0 & \cdots & 0 & 0 \\
\omega_{g+2} & 0 & + & - & \cdots & 0 & 0 \\
\omega_{g+3} & 0 & 0 & + & \cdots & 0 & 0 \\
\vdots & \vdots & \vdots & \vdots & \ddots & \vdots & \vdots \\
\omega_{3g-3} & 0 & 0 & 0 & \cdots & + & -
\end{array}
\]
Symbols $+$ and $-$ denote residues $\pm \frac{1}{2\pi\mathrm{i}}$, respectively, while $0$ indicates the form is holomorphic. For brevity, the subscript $\big|_{q_0}$ is omitted for all points $\widetilde{p}_{j,1}$ and forms $\omega_i$.
\end{lemma}

\begin{proof}
The strategy follows Lemma~\ref{dual 1forms}, yet the result is not directly applicable. For convenience, we focus on the $1$-forms and their residues on $\widetilde{\Sigma_{q_0}} = C_1 \sqcup C_2$; the case for $q \in \mathcal{U}$ is analogous. Let $C_1$ be the component containing $\widetilde{p}_{j,1}\big|_{q_0}$, and $C_2$ the other.

Observe that $\left\{\mathfrak{a}_i\big|_{q_0}, \mathfrak{b}_i\big|_{q_0}\right\}_{i=1}^g$ forms a symplectic basis of $H_1\left(\widetilde{\Sigma_{q_0}}, \mathbb{R}\right)^-$. We can thus choose holomorphic $1$-forms $\left\{\omega_i\big|_{q_0}\right\}_{i=1}^g$ such that
\[
\int_{\mathfrak{a}_i|_{q_0}} \omega_j\big|_{q_0} = \delta_{ij}, \qquad 1 \leq i,j \leq g.
\]
For $1 \le j \le 2g-3$, let $\alpha_{g+j}\big|_{q_0}$ be a meromorphic $1$-form on $C_1$ with only simple poles at $\widetilde p_{2j-1,1}\big|_{q_0}$ and $\widetilde p_{2j+1,1}\big|_{q_0}$ with residues $\pm \frac{1}{2\pi \mathrm{i}}$, respectively. Its existence is guaranteed by the Riemann–Roch theorem. Define $\omega_{g+j}\big|_{q_0} := \left(\alpha_{g+j}\big|_{q_0}, -\alpha_{g+j}\big|_{q_0}\right)$ on $C_1 \sqcup C_2$. Then
\[
\int_{\mathfrak{a}_{g+i}|_{q_0}} \omega_{g+j}\big|_{q_0} = \delta_{ij}, \qquad 1 \leq i,j \leq 2g-3,
\]
as each $\mathfrak{a}_{g+i}\big|_{q_0}$ is a contractible loop surrounding $\left\{\widetilde p_{2k-1,1}\big|_{q_0}\right\}_{k=1}^{i}$. Finally, any holomorphic $1$-form in $\left\{\omega_i\big|_{q_0}\right\}_{i=1}^g$ integrates to zero over the contractible curves $\left\{\mathfrak{a}_{g+j}\big|_{q_0}\right\}_{j=1}^{2g-3}$. Moreover, each $\omega_{g+j}\big|_{q_0}$ can be adjusted by a suitable linear combination of $\left\{\omega_i\big|_{q_0}\right\}_{i=1}^g$ to ensure
\[
\int_{\mathfrak{a}_i|_{q_0}} \omega_{g+j}\big|_{q_0} = \int_{\mathfrak{a}_{g+j}|_{q_0}} \omega_i\big|_{q_0} = 0, \qquad 1 \le i \le g, \quad 1 \le j \le 2g-3.
\]
The set $\left\{ \omega_i\big|_{q_0}\right\}_{i=1}^g \cup \left\{ \omega_{g+j}\big|_{q_0} \right\}_{j=1}^{2g-3}$ then forms the required basis.
\end{proof}

We are now prepared to state the main theorem.

\begin{theorem}
\label{main Bab}
Let $\mathcal{B}=H^0(C,K_C^2)$ be the base of the $\mathrm{SL}_2(\mathbb{C})$-Hitchin system over a compact Riemann surface $C$ of genus $g \geq 2$, and let $\omega_{\mathrm{SK}}$ be the special K\"ahler metric on $\mathcal{B}^{\mathrm{reg}}$. For $q_0 \in \mathcal{B}_{\mathrm{ab}}$, there exist a simply connected neighborhood $\mathcal{U} \subseteq \mathcal{B}$ of $q_0$ and functions $\left\{\mathfrak{z}^i, \mathfrak{w}_i\right\}_{i=1}^{3g-3}$ on $\mathcal{U}$ such that $\omega_{\mathrm{SK}}$ is expressed as
\[
\omega_{\mathrm{SK}} = \frac{\mathrm{i}}{2} \sum_{i,j=1}^{3g-3} \Im\left(\tau_{ij}(q)\right) \mathrm{d}\mathfrak{z}^i \wedge \mathrm{d}\bar{\mathfrak{z}}^j = \frac{\mathrm{i}}{2} \Big( \cdots,\mathrm{d}\mathfrak{z}^{i}, \cdots\Big) 
\begin{pmatrix} 
A & B \\ B^{\mathsf{T}} & D 
\end{pmatrix} \begin{pmatrix} 
\vdots \\ \mathrm{d}\bar{\mathfrak{z}}^{i} \\ \vdots 
\end{pmatrix},
\]
where $\tau_{ij}(q) = \int_{\mathfrak{b}_j|_q} \omega_i\big|_q$ and $\omega_i\big|_q$ is defined in Lemma~\ref{dual 1forms Bab}. Here, $A(q)$ and $D(q)$ are $g \times g$ and $(2g-3) \times (2g-3)$ blocks, respectively.

As $q \in \mathcal{B}^{\mathrm{reg}}$ approaches $q_0 \in \mathcal{B}_{\mathrm{ab}}$, the diagonal and the first sub- and super-diagonal entries of $D(q)$ diverge. Conversely, $A(q)$, $B(q)$, and the remaining entries of $D(q)$ extend continuously to $\mathcal{U}$. The asymptotic behavior of $D(q)$ is summarized as follows:
\[
\begin{array}{c|ccccc}
\rule{0pt}{3ex}
\Im \left( \int_{\mathfrak{b}_{g+i}} \omega_{g+j} \right) & \omega_{g+1} & \omega_{g+2} & \omega_{g+3} & \cdots & \omega_{3g-3} \\[6pt]
\hline
\rule{0pt}{4ex}
\mathfrak{b}_{g+1} & L_{g+1}+L_{g+2} & -L_{g+2} & c & \cdots & c \\[6pt]
\mathfrak{b}_{g+2} & -L_{g+2} & L_{g+2}+L_{g+3} & -L_{g+3} & \cdots & c \\[6pt]
\mathfrak{b}_{g+3} & c & -L_{g+3} & L_{g+3}+L_{g+4} & \cdots & c \\[6pt]
\vdots & \vdots & \vdots & \vdots & \ddots & \vdots \\[6pt]
\mathfrak{b}_{3g-3} & c & c & c & \cdots & L_{3g-3}+L_{3g-2} \\[6pt]
\end{array}
\]
Here, $c$ indicates that the corresponding entry remains bounded, while $L_{g+k}$ denotes the logarithmic divergence rate for $k=1,\dots,2g-2$, i.e., $L_{g+k} \sim -\log|\mathfrak{y}^{g+k}|$. For instance, $D_{11}(q) \sim L_{g+1} + L_{g+2}$ is understood in the sense that there exist positive constants $C_1, C_2$ such that
\[
C_1 < \frac{D_{11}(q)}{-\log|\mathfrak{y}^{g+1}| - \log|\mathfrak{y}^{g+2}|} < C_2
\]
as $\mathcal{B}^{\mathrm{reg}} \ni q \to q_0$. For brevity, the symbol $\big|_q$ is omitted in the table and expressions above.
\end{theorem}

\begin{proof}
The functions $\big\{\mathfrak{z}^i, \mathfrak{w}_i\big\}_{i=1}^{3g-3}$ are defined as in \eqref{5e4}. Each basis form $\omega_{i}\big|_{q_0}$ is either holomorphic or meromorphic with only simple poles, as characterized in Lemma~\ref{dual 1forms Bab}. By construction, the forms $\left\{\omega_i\big|_{q_0}\right\}_{i=1}^{g}$ are holomorphic; consequently, all entries in the blocks $A(q_0)$ and $B(q_0)$ are finite. We now analyze the $D_{ij}$ for $i,j \in \big\{1,\dots,2g-3\big\}$ by examining the intersection of the paths $\mathfrak{b}_{g+j}$ with the poles of $\omega_{g+i}$ using the residue table in Lemma~\ref{dual 1forms Bab}.
\begin{enumerate}
    \item If $|i-j| > 1$, the integration path $\mathfrak{b}_{g+j}\big|_{q_0}$ is disjoint from the poles of $\omega_{g+i}\big|_{q_0}$. Thus, the integral converges and $D_{ij}(q_0)$ is finite.
    
    \item If $i = j-1$, the starting point $\widetilde{p}_{2j-1,1}$ of the path $\mathfrak{b}_{g+j}\big|_{q_0}$ coincides with a simple pole of $\omega_{g+i}\big|_{q_0}$ with residue $-\frac{1}{2\pi\mathrm{i}}$.
    
    \item If $i = j$, both endpoints $\widetilde{p}_{2j-1,1}$ and $\widetilde{p}_{2j+1,1}$ of the path $\mathfrak{b}_{g+j}\big|_{q_0}$ are simple poles of $\omega_{g+i}\big|_{q_0}$ with residues $+\frac{1}{2\pi\mathrm{i}}$ and $-\frac{1}{2\pi\mathrm{i}}$, respectively.
    
    \item If $i = j+1$, the endpoint $\widetilde{p}_{2j+1,1}$ of the path $\mathfrak{b}_{g+j}\big|_{q_0}$ coincides with a simple pole of $\omega_{g+i}\big|_{q_0}$ with residue $+\frac{1}{2\pi\mathrm{i}}$.
\end{enumerate}
Thus, the integrals defining the diagonal and first off-diagonal entries $\tau_{ij}(q_0)$ of $D(q_0)$ diverge, while all other entries remain finite. The detailed computation and asymptotic analysis proceed analogously to the corresponding arguments in Theorem~\ref{main Bd} and Theorem~ \ref{main B2g-2}.
\end{proof}

We should acknowledge that this asymptotic behavior is a specific manifestation of our chosen coordinates. Unlike the $\mathcal{B}_d$ case, where divergences appear only on the diagonal, here additional divergent terms occur in the off-diagonal entries. A natural concern is whether one can design local coordinates that yield a simpler, more intrinsic description of the metric singularity. This is not merely a problem of diagonalizing a symmetric matrix via linear algebra. A key constraint is that coordinates cannot be chosen arbitrarily; we must work with conjugate special coordinate systems. According to Freed \cite[Proposition~1.38(d)]{freed1999special}, the transition function between any two such systems is an affine transformation whose linear part is a constant $\operatorname{Sp}(2n, \mathbb{Z})$ matrix. Due to this restriction, we speculate that the singularity of the submatrix $D$ cannot be simplified into a diagonal form.

While Hitchin \cite{hitchin2019critical} focused primarily on the stable case, a parallel construction yields an integrable system $\mathcal{M}_{\mathrm{ab}} \to \mathcal{B}_{\mathrm{ab}}$ corresponding precisely to the polystable locus within the Hitchin system. More concretely:

\begin{prop}
\label{subintegral system Bab}
The restricted Hitchin map $\mathcal{M}_{\mathrm{ab}} \to \mathcal{B}_{\mathrm{ab}}$ defines an algebraically completely integrable system of dimension $2g$. Here, $\mathcal{M}_{\mathrm{ab}} \subseteq \mathcal{M}$ denotes the subvariety of Higgs bundles of the form
\[
\left(E = L \oplus L^{-1}, \quad \Phi = \begin{pmatrix} \psi & 0 \\ 0 & -\psi \end{pmatrix} \right),
\]
where $L \in \operatorname{Jac}(C)$ is a line bundle and $\psi$ is a holomorphic 1-form possessing only simple zeros.
\end{prop}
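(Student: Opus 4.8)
The plan is to identify $\mathcal{M}_{\mathrm{ab}}$ with a finite quotient of the cotangent bundle $T^*\operatorname{Jac}(C)$ — equivalently, with a quotient of the abelian ($\mathrm{GL}_1$) Hitchin system, the prototypical algebraically completely integrable system — and to transport the canonical integrable structure of the latter. First I would record that every Higgs bundle of the stated form is polystable: each summand $(L,\psi)$ and $(L^*,-\psi)$ is a rank-one, hence stable, Higgs bundle of slope $0$, and a direct sum of stable Higgs bundles of equal slope is polystable. Thus $\mathcal{M}_{\mathrm{ab}}\subseteq\mathcal{M}$ is well-defined. I would then consider the parametrizing morphism $\rho\colon\operatorname{Jac}(C)\times H^0(C,K_C)\to\mathcal{M}_{\mathrm{ab}}$, $(L,\psi)\mapsto(L\oplus L^*,\operatorname{diag}(\psi,-\psi))$, and observe that the only generic identification among these pairs is the interchange of the two summands, i.e.\ the involution $\sigma\colon(L,\psi)\mapsto(L^*,-\psi)$. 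Hence $\rho$ is generically two-to-one, giving $\dim_{\mathbb{C}}\mathcal{M}_{\mathrm{ab}}=2g$.

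Next I would install the symplectic and integrable structure. Since $\operatorname{Jac}(C)$ is a complex torus with constant tangent and cotangent spaces $H^1(C,\mathcal{O}_C)$ and, by Serre duality, $H^0(C,K_C)$, the product $\operatorname{Jac}(C)\times H^0(C,K_C)$ is canonically $T^*\operatorname{Jac}(C)$ with its Liouville symplectic form $\omega_{\mathrm{can}}$. The involution $\sigma$ acts as inversion $L\mapsto L^*$ on the base torus and as $\psi\mapsto-\psi$ on the fibres, i.e.\ as the full $(-1)$-map of $T^*\operatorname{Jac}(C)$, which preserves $\omega_{\mathrm{can}}$; therefore $\omega_{\mathrm{can}}$ descends to the smooth locus of $\mathcal{M}_{\mathrm{ab}}\cong T^*\operatorname{Jac}(C)/\sigma$. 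Under $\rho$ the Hitchin map becomes $(L,\psi)\mapsto\det\operatorname{diag}(\psi,-\psi)=-\psi^2$, which lands in $\mathcal{B}_{\mathrm{ab}}$ because $-\psi^2=(\mathrm{i}\psi)^2$, and factors as $T^*\operatorname{Jac}(C)\to H^0(C,K_C)\xrightarrow{\psi\mapsto-\psi^2}\mathcal{B}_{\mathrm{ab}}$.

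To finish I would identify the fibres and verify the Lagrangian property. Over $q=-\psi^2\in\mathcal{B}_{\mathrm{ab}}$ with $\psi$ having only simple zeros, the preimage in $T^*\operatorname{Jac}(C)$ is the pair of slices $\operatorname{Jac}(C)\times\{\psi\}$ and $\operatorname{Jac}(C)\times\{-\psi\}$, which $\sigma$ interchanges into a single abelian variety $\operatorname{Jac}(C)$ of dimension $g$. These constant-$\psi$ slices are exactly the fibres of the projection $T^*\operatorname{Jac}(C)\to H^0(C,K_C)$, and they are Lagrangian for $\omega_{\mathrm{can}}$ since the restriction of $\sum\mathrm{d}p\wedge\mathrm{d}q$ to $\{\mathrm{d}p=0\}$ vanishes. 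As the family of abelian varieties is algebraic over $\mathcal{B}_{\mathrm{ab}}$ and $\omega_{\mathrm{can}}$ is algebraic, the defining properties of an algebraically completely integrable system of dimension $2g$ all hold, with base dimension $g$, fibre dimension $g$, and total dimension $2g$.

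The hard part will not be the local bookkeeping but pinning down which symplectic form is meant. Because $\mathcal{M}_{\mathrm{ab}}$ lies inside a single Hitchin fibre of $\mathcal{M}$ of much larger dimension — its fibres are isotropic in $(\mathcal{M},\Omega)$ — the ambient form $\Omega$ restricts to $\mathcal{M}_{\mathrm{ab}}$ as a degenerate two-form and is \emph{not} the integrable-system form. I therefore have to argue that the intrinsic Liouville form $\omega_{\mathrm{can}}$ on $T^*\operatorname{Jac}(C)/\sigma$ is the correct structure, namely the one whose period matrix reproduces the special K\"ahler metric $\omega_{\mathrm{SK},\mathcal{B}_{\mathrm{ab}}}$ realized later via Construction~\ref{construction}, and to deal with the orbifold points of the quotient — the $2$-torsion of $\operatorname{Jac}(C)$ together with $\psi=0$, where $\sigma$ has fixed points — by passing to the generic smooth locus, as is standard for such systems.
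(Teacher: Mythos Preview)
Your identification of $\mathcal{M}_{\mathrm{ab}}$ with $T^*\operatorname{Jac}(C)/\sigma$ is correct and gives a cleaner route than the paper's. The paper instead restricts the ambient form $\Omega$ to $\mathcal{M}_{\mathrm{ab}}$, pulls back the special coordinates $\mathfrak{z}^1,\dots,\mathfrak{z}^g$ from $\mathcal{B}_{\mathrm{ab}}$, and checks Poisson commutativity by an explicit Hamiltonian-vector-field computation using Serre duality and the identity $2\det\Phi=-\mathrm{Tr}(\Phi^2)$. Your cotangent-bundle picture bypasses that computation entirely.

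However, your final paragraph contains a genuine error that undermines the argument as written. The assertion that ``$\mathcal{M}_{\mathrm{ab}}$ lies inside a single Hitchin fibre'' is false: the Hitchin map on $\mathcal{M}_{\mathrm{ab}}$ is $(L,\psi)\mapsto -\psi^2$, which sweeps out all of $\mathcal{B}_{\mathrm{ab}}$ as $\psi$ varies. More seriously, the claim that $\Omega$ restricts degenerately is wrong. A tangent vector to $\mathcal{M}_{\mathrm{ab}}$ at $(L\oplus L^*,\operatorname{diag}(\psi,-\psi))$ is $(\dot A,\dot\Phi)=(\operatorname{diag}(\dot a,-\dot a),\operatorname{diag}(\dot\psi,-\dot\psi))$ with $\dot a\in H^1(C,\mathcal{O}_C)$ and $\dot\psi\in H^0(C,K_C)$; the ambient pairing gives
\[
\Omega\bigl((\dot A_1,\dot\Phi_1),(\dot A_2,\dot\Phi_2)\bigr)=\int_C\mathrm{Tr}(\dot\Phi_1\dot A_2-\dot\Phi_2\dot A_1)=2\int_C(\dot\psi_1\,\dot a_2-\dot\psi_2\,\dot a_1),
\]
which is nondegenerate by Serre duality and equals $2\omega_{\mathrm{can}}$ under your identification $T^*\operatorname{Jac}(C)\cong\operatorname{Jac}(C)\times H^0(C,K_C)$. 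So the ``hard part'' you anticipate does not exist: the intrinsic Liouville form you construct \emph{is} the restricted ambient form, up to the harmless factor of $2$. Once you replace the last paragraph with this two-line computation, your proof is complete and in fact more transparent than the paper's, since it makes the abelian-variety fibres and the symplectic form visible simultaneously rather than verifying commutativity coordinate by coordinate.
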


\begin{proof}
Define a holomorphic symplectic structure $\Omega$ on $\mathcal{M}_{\mathrm{ab}}$ by restriction from $\mathcal{M}$. According to Proposition~\ref{zero locus Bab}, the functions $\left\{\mathfrak{z}^i\right\}_{i=1}^g$ provide local coordinates on $\mathcal{U} \cap \mathcal{B}_{\mathrm{ab}}$. We define local functions $f_i := \mathfrak{z}^i \circ \operatorname{Hit}$ on $\mathcal{M}_{\mathrm{ab}}$ for $i = 1,\dots,g$, and show that these functions are functionally independent and Poisson-commute with respect to $\Omega$.

Let $X_i$ be the Hamiltonian vector field on $\mathcal{M}$ associated with $f_i$, defined by $\iota_{X_i} \Omega = \mathrm{d}f_i$. Fix a point $q_0 \in \mathcal{B}_{\mathrm{ab}}$ and a point $m := (E, \Phi) \in \mathcal{M}_{\mathrm{ab}}$ in the fiber over $q_0$. The differential $\mathrm{d}\mathfrak{z}^i(q_0)$ is an element of $H^0\left(C, K^2_C\right)^*$, which is represented by a class $\mu_i \in H^1\left(C, T_C\right)$ via Serre duality. 

According to Hitchin \cite[Proposition 3]{hitchin2019critical}, for any tangent vector $(\dot{A}, \dot{\Phi}) \in T_m\mathcal{M}$, the image $\mathrm{d}\operatorname{Hit}(\dot{A}, \dot{\Phi})$ belongs to $H^0(C, K_C^2(-\operatorname{div} \Phi))$. In the $\mathcal{M}_{\mathrm{ab}}$ case, since $\operatorname{div} \Phi = D_{q_0}$, this image space coincides with $T_{q_0}\mathcal{B}_{\mathrm{ab}} = H^0(C, K^2_C(-D_{q_0}))$, given by Proposition~\ref{zero locus Bab}. Crucially, the action of $\mathrm{d}\mathfrak{z}^i$ on $\mathrm{d}\operatorname{Hit}(\dot{A}, \dot{\Phi})$ is well-defined and leads to the following chain of equalities:
\begin{align*}
\Omega\bigl(X_i(m), (\dot{A}, \dot{\Phi})\bigr) &=\mathrm{d}\mathfrak{z}^i \circ \mathrm{d}\operatorname{Hit}(\dot{A}, \dot{\Phi}) = - \Big\langle \mu_i, \mathrm{Tr}(\Phi \dot{\Phi}) \Big\rangle_{\mathrm{Serre}} \\
&= - \int_C \mathrm{Tr}(\Phi \dot{\Phi}) \cdot \mu_i = \Omega\Big( (-\mu_i \Phi, 0), (\dot{A}, \dot{\Phi}) \Big),
\end{align*}
where we have used the identity $2\det\Phi = -\mathrm{Tr}(\Phi^2)$. Consequently,
\[
\Omega\bigl(X_i(m), X_j(m)\bigr) = \Omega\Bigl( (-\mu_i \Phi, 0), (-\mu_j \Phi, 0) \Bigr) = 0,
\]
which implies $\big\{f_i, f_j\big\}_\Omega = 0$.
\end{proof}

Since every Higgs bundle $(E, \Phi) \in \mathcal{M}_{\mathrm{ab}}$ is decomposable, the system reduces to the moduli space of rank-1 Higgs bundles. The hyperK\"ahler structure on such spaces has been extensively studied in Goldman--Xia \cite[Section 5]{goldman2008rank}. We denote the special K\"ahler metric on $\mathcal{B}_{\mathrm{ab}}$ induced by the subintegrable system described in Proposition~\ref{subintegral system Bab} by $\omega_{\mathrm{SK}, \mathcal{B}_{\mathrm{ab}}}$, and the corresponding K\"ahler potential by $\mathcal{K}_{\mathrm{ab}}$.

\begin{cor}
\label{metric Bab}
For any $q_0 \in \mathcal{B}_{\mathrm{ab}}$, the following limit holds:
\[
\omega_{\mathrm{SK}, \mathcal{B}_{\mathrm{ab}}}(q_0) = \lim_{\mathcal{B}^{\mathrm{reg}} \ni q \to q_0} \left( \omega_{\mathrm{SK}}(q)\big|_{T_{q}\mathcal{B}} \right).
\]
Furthermore, the K\"ahler potential $\mathcal{K}_0$ of $\omega_{\mathrm{SK}}$ extends continuously from $\mathcal{B}^{\mathrm{reg}}$ to $\mathcal{B}_{\mathrm{ab}}$, and its restriction coincides with $\mathcal{K}_{\mathrm{ab}}$.
\end{cor}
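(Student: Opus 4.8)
The plan is to follow the template established by Corollaries~\ref{metric Bd} and~\ref{extension Bd}, adapting it to the two features peculiar to the abelian stratum: the spectral curve $\widetilde{\Sigma_{q_0}}\cong C\sqcup C$ is disconnected, and the associated integrable system is the one furnished by Proposition~\ref{subintegral system Bab} rather than by Hitchin's original construction. For the metric statement, I would start from the block decomposition in item~(3) of Theorem~\ref{main Bab}. By item~(2) the stratum $\mathcal{B}_{\mathrm{ab}}\cap\mathcal{U}$ is the locus $\{\mathfrak{z}^{g+k}=0\}_{k=1}^{2g-3}$, so that $T_{q_0}\mathcal{B}_{\mathrm{ab}}$ is spanned by $\partial_{\mathfrak{z}^1},\dots,\partial_{\mathfrak{z}^g}$; restricting $\omega_{\mathrm{SK}}(q)$ to this subspace amounts to setting $\mathrm{d}\mathfrak{z}^{g+k}=0$ and retains only the $g\times g$ block $A(q)$. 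Since every entry of $A(q)$ extends continuously to $\mathcal{U}$, the restricted form converges to $\frac{\mathrm{i}}{2}\sum_{i,j=1}^g\Im(\tau_{ij}(q_0))\,\mathrm{d}\mathfrak{z}^i\wedge\mathrm{d}\bar{\mathfrak{z}}^j$.

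It then remains to identify this limit with $\omega_{\mathrm{SK},\mathcal{B}_{\mathrm{ab}}}(q_0)$. Here I would invoke Lemma~\ref{symplectic basis ab}, which shows that $\{\mathfrak{a}_i|_{q},\mathfrak{b}_i|_{q}\}_{i=1}^g$ restricts, for each $q\in\mathcal{B}_{\mathrm{ab}}\cap\mathcal{U}$, to a symplectic basis of $H_1(\widetilde{\Sigma_q},\mathbb{R})^-$. Combined with the criterion of He--Horn--Li \cite[Lemma~6.1]{he2025asymptotics} and the integrable structure of Proposition~\ref{subintegral system Bab}, this exhibits $\{\mathfrak{z}^i,\mathfrak{w}_i\}_{i=1}^g$ as a conjugate special coordinate system on $\mathcal{B}_{\mathrm{ab}}$ and yields $\omega_{\mathrm{SK},\mathcal{B}_{\mathrm{ab}}}=\frac{\mathrm{i}}{2}\sum_{i,j=1}^g\Im(\partial\mathfrak{w}_j/\partial\mathfrak{z}^i)\,\mathrm{d}\mathfrak{z}^i\wedge\mathrm{d}\bar{\mathfrak{z}}^j$, precisely the limit just computed. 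Concretely, on the stratum $\theta|_{\widetilde{\Sigma_{q_0}}}$ restricts to $\pm\psi$ on the two copies of $C$, so the period matrix $(\tau_{ij})_{i,j\le g}$ reduces to the Riemann period matrix of $C$, matching the Jacobian fibration of $\mathcal{M}_{\mathrm{ab}}$ and the rank-one description of Goldman--Xia \cite[Section~5]{goldman2008rank}.

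For the K\"ahler potential, I would use the representation
\[
\mathcal{K}_0(q)=\frac{\mathrm{i}}{4}\sum_{i=1}^{3g-3}\bigl(\mathfrak{z}^i\bar{\mathfrak{w}}_i-\mathfrak{w}_i\bar{\mathfrak{z}}^{\,i}\bigr),\qquad \mathcal{K}_{\mathrm{ab}}(q)=\frac{\mathrm{i}}{4}\sum_{i=1}^{g}\bigl(\mathfrak{z}^i\bar{\mathfrak{w}}_i-\mathfrak{w}_i\bar{\mathfrak{z}}^{\,i}\bigr),
\]
together with \eqref{1e2}, which makes $\mathcal{K}_0$ manifestly single-valued despite the ambiguity \eqref{5e5} in $\mathfrak{w}_{g+k}$. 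Since each $\mathfrak{w}_{g+k}=-\int_{\mathfrak{b}_{g+k}|_q}\theta|_{\widetilde{\Sigma_q}}$ is an integral of the holomorphic form $\theta$ and hence stays bounded as $q\to q_0$, while $\mathfrak{z}^{g+k}\to0$ by Lemma~\ref{zero locus ab}, every transverse summand $\mathfrak{z}^{g+k}\bar{\mathfrak{w}}_{g+k}-\mathfrak{w}_{g+k}\bar{\mathfrak{z}}^{g+k}$ tends to $0$. Thus $\mathcal{K}_0$ extends continuously to $\mathcal{B}_{\mathrm{ab}}$ with limiting value $\mathcal{K}_{\mathrm{ab}}$.

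The main obstacle I anticipate lies in the second paragraph: verifying that the local functions $\{\mathfrak{z}^i,\mathfrak{w}_i\}_{i=1}^g$ really furnish the special K\"ahler structure of the abelian system. Unlike the $\mathcal{B}_d$ case, the spectral curve degenerates to $C\sqcup C$, so the $(-1)$-eigenspace must be tracked through the disconnection, and one must check that the period pairing built from $\mathfrak{a}_i,\mathfrak{b}_i$ coincides with the intrinsic special K\"ahler data of $\mathcal{M}_{\mathrm{ab}}\to\mathcal{B}_{\mathrm{ab}}$ from Proposition~\ref{subintegral system Bab}. I also expect that, in contrast to Corollary~\ref{extension Bd}, only a $\mathrm{C}^0$-extension is attainable: the derivative $\partial\mathcal{K}_0/\partial\mathfrak{z}^j$ involves products $\tau_{ij}\bar{\mathfrak{z}}^{\,i}$ in which the tridiagonal divergences $L_{g+k}\sim-\log|\mathfrak{y}^{g+k}|$ of Theorem~\ref{main Bab} are paired against the coordinates $\mathfrak{z}^{g+k}=\sum_{j\le k}\mathfrak{y}^{g+j}$, and this coordinate mismatch obstructs the clean cancellation $\bar{\mathfrak{z}}^{\,j}\log|\mathfrak{z}^{\,j}|\to0$ that produced the $\mathrm{C}^1$ statement in the $\mathcal{B}_d$ setting.
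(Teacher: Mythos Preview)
Your proposal is correct and follows the same template the paper implicitly intends: the paper states Corollary~\ref{metric Bab} without proof, treating it as analogous to Corollaries~\ref{metric Bd}, \ref{extension Bd}, \ref{metric B2g-2}, and \ref{extension B2g-2}, and your argument carries out precisely that analogy---restricting to the $A$-block via Theorem~\ref{main Bab}(2)--(3), invoking Lemma~\ref{symplectic basis ab} together with He--Horn--Li \cite[Lemma~6.1]{he2025asymptotics} to match the limit with $\omega_{\mathrm{SK},\mathcal{B}_{\mathrm{ab}}}$, and handling the potential exactly as in Corollary~\ref{extension Bd}. Your closing observation that the tridiagonal divergences $L_{g+k}\sim-\log|\mathfrak{y}^{g+k}|$ are paired against $\mathfrak{z}^{g+k}=\sum_{j\le k}\mathfrak{y}^{g+j}$ rather than $\mathfrak{y}^{g+k}$ itself, obstructing the $\mathrm{C}^1$ argument, is exactly the reason the paper claims only a $\mathrm{C}^0$ extension here.
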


Consider the radial line $\mathcal{L}_{q_0} := \big\{ l \cdot q_0 \mid l \in \mathbb{C} \big\} \subseteq \mathcal{B}_{\mathrm{ab}}$, naturally parametrized by $l \in \mathbb{C}$, and let $\omega_{\mathcal{L}_{q_0}}$ denote the restriction of the special K\"ahler metric $\omega_{\mathrm{SK}, \mathcal{B}_{\mathrm{ab}}}$ to this line.

\begin{cor}
\label{radial metric Bab}
The form $\omega_{\mathcal{L}_{q_0}}$ induces a flat metric with a cone singularity of angle $\pi$ at the origin on $\mathcal{L}_{q_0} \setminus \{0\}$. Specifically,
\[
\omega_{\mathcal{L}_{q_0}} = \frac{\mathrm{i}}{2} C_0 |l|^{-1} \mathrm{d}l \wedge \mathrm{d}\bar{l}, \quad \text{where} \quad C_0 = \frac{\mathrm{i}}{4} \int_C \sqrt{\big.q_0 \bar{q}_0}.
\]
The associated K\"ahler potential for $\omega_{\mathcal{L}_{q_0}}$ is $2C_0 |l|$.
\end{cor}

\subsection{Example for genus 2}

Having analyzed the singularities of $\omega_{\mathrm{SK}}$ near $\mathcal{B}_d$ and $\mathcal{B}_{\mathrm{ab}}$ via conjugate special coordinates $\left\{\mathfrak{z}^i, \mathfrak{w}_i\right\}_{i=1}^{3g-3}$, we now specialize to genus two curves to derive explicit asymptotic expansions.

Without loss of generality, let $C$ be the genus two curve defined by $y^2 = f(z)\coloneqq\prod_{k=1}^6 (z - a_k)$ with distinct $a_k \in \mathbb{C}$. The Hitchin base $\mathcal{B} = H^0(C, K_C^2)$ is three-dimensional, parameterized by quadratic differentials of the form:
\[
\left\{ \left( c_0 + c_1 z + c_2 z^2 \right) \frac{\mathrm{d}z^2}{y^2} \;\middle|\; c_0, c_1, c_2 \in \mathbb{C} \right\}.
\]
Let $q_0 = Q_0(z)\frac{\mathrm{d}z^2}{f(z)}$ be a point in the discriminant locus for some quadratic polynomial $Q_0(z)$. Let $(\mathcal{U}, \mathfrak{z}^i)$ be the chart centered at $q_0$ constructed in Theorems~\ref{main Bd}, \ref{main B2g-2}, and \ref{main Bab}. There, the special Kähler metric is given by:
\[
\omega_{\mathrm{SK}} = \frac{\mathrm{i}}{2} \sum_{i,j=1}^{3} \omega_{ij} \, \mathrm{d}\mathfrak{z}^i \wedge \mathrm{d}\bar{\mathfrak{z}}^j.
\]
Next, we characterize the asymptotic behavior of $\omega_{ij}(q)$ as $q \to q_0$ by employing an explicit coordinate system $(u,v,w)$ on $\mathcal{U}$ defined by:
\begin{equation}
\label{5e6}
(u, v, w) \longmapsto \widetilde{Q}(z; u, v, w) \coloneqq \Big( Q_0(z) + u z^2 + v z + w \Big) \frac{\mathrm{d}z^2}{y^2}.
\end{equation}

\textbf{Case $\mathcal{B}_1$}: The structures of the strata $\mathcal{B}_1$ and $\mathcal{B}_2$ were previously investigated in \cite[Section~5.2]{hitchin2021integrable}. The stratum $\mathcal{B}_1$ consists of six disjoint components $\mathcal{B}_1^{(k)}$ ($1 \le k \le 6$) defined by:
\[
\mathcal{B}_1^{(k)} \coloneqq \left\{ Q(z) \frac{\mathrm{d}z^2}{y^2} \;\middle|\; 
\begin{aligned} 
&Q(a_k) = 0, \quad Q'(a_k) \neq 0, \\ 
&Q(a_i) \neq 0 \text{ for } i \neq k\ \text{and}\ 1\leq i\leq 6 
\end{aligned} \right\}.
\]
Geometrically, each $\mathcal{B}_1^{(k)}$ is the complement of six lines through the origin in $\mathbb{C}^2$.

If $q_0 = Q_0(z) \frac{\mathrm{d}z^2}{y^2} \in \mathcal{B}_1^{(k)}$, the coefficients $\omega_{ij}(q)$ are smooth on $\mathcal{U} \cap \mathcal{B}^{\mathrm{reg}}$ and extend continuously to $q_0$ for $(i,j) \neq (3,3)$, while $\omega_{33}(q)$ is unbounded by Theorem~\ref{main Bd}. Specifically, let $p$ be the double zero of $q_0$ and $U \subseteq C$ a neighborhood of $p$. For any smooth metric on $U$, Claim~\ref{log estimate} implies that as $q \to q_0$:
\[
\omega_{33}(q) \sim -\log d_q,
\]
where $d_q$ denotes the distance between the two simple zeros of $q$ in $U$. Here, $A \sim B$ means $C_1 < A/B < C_2$ for some positive constants $C_i$. Accordingly, we focus on estimating $d_q$ using the local coordinates $(u, v, w)$ on $\mathcal {U}$ introduced in \eqref{5e6}. 

We introduce new coordinates $\big(u', v', w'\big) := \big(u, v, a_k^2 u + a_k v + w\big)$ on $\mathcal{U}$, with respect to which
\[
\widetilde{Q}(z; u', v', w') = Q_0(z) + u'(z^2 - a_k^2) + v'(z - a_k) + w'.
\]
Within $\mathcal{U}$, the intersection $\mathcal{U} \cap \mathcal{B}_1$ is precisely the zero locus $\big\{w' = 0\big\}$. 

Recall that $a_k$ is a root of $Q_0(z)$. Let $z(u', v', w')$ be the root of $\widetilde{Q}(z; u', v', w')$ such that $z(u', v', 0) = a_k$. Since $\partial_z \widetilde{Q}(a_k; u', v', 0) = Q_0'(a_k) + 2a_k u' + v' \neq 0$ for sufficiently small $u', v'$, the Implicit Function Theorem ensures that $z(u', v', w')$ is smooth, with the first-order expansion:
\[
z(u', v', w') = z(u',v',0) - \frac{w'}{Q_0'(a_k) + 2a_k u' + v'} + O\big(|w'|^2\big) \quad \text{as } (u',v',w') \to 0.
\]
The two zeros of $q \in \mathcal{U}$ are $(y,z) = \left(\pm\sqrt{f(z(u,v,w))}, z(u,v,w)\right)$, whose distance $d_q$ scales as:
\[
d_q \sim \big|z(u',v',w')-a_k\big|^{1/2} \cdot \big|f'(a_k)\big|^{1/2} \sim \big|w'\big|^{1/2} \quad \text{as } (u',v',w') \to 0.
\]
Consequently, the asymptotic behavior of the metric coefficient is:
\[
\omega_{33}(q) \sim -\log d_q \sim -\log \big|u a_k^2 + v a_k + w\big| \quad \text{as } \mathcal{B}^{\mathrm{reg}} \ni q \to q_0.
\]

\textbf{Case $\mathcal{B}_2$}: The stratum $\mathcal{B}_2$ consists of 15 disjoint components $\mathcal{B}_2^{(k,l)}$:
\[
\mathcal{B}_2^{(k,l)} \coloneqq \left\{ Q(z) \frac{\mathrm{d}z^2}{y^2} \;\middle|\; Q(a_k) = Q(a_l) = 0\right\}\ \text{for}\ 1 \le k < l \le 6.
\]
Geometrically, each component is isomorphic to $\mathbb{C}^*$.

For $q_0 = Q_0(z) \frac{\mathrm{d}z^2}{y^2} \in \mathcal{B}_2^{(k,l)}$, the coefficients $\omega_{ij}(q)$ extend continuously to $q_0$ provided that $(i,j) \notin\big\{(2,2), (3,3)\big\}$ by Theorem \ref{main B2g-2}. In contrast, $\omega_{22}(q)$ and $\omega_{33}(q)$ are unbounded near the stratum. Using the local coordinates on $\mathcal {U}$ from \eqref{5e6}, $\mathcal{U} \cap \mathcal{B}_2$ is locally defined by the common zero locus of $u a_k^2 + v a_k + w$ and $u a_l^2 + v a_l + w$. Moreover, as $\mathcal{B}^{\mathrm{reg}} \ni q \to q_0$, it follows that:
\[
\omega_{22}(q) \sim -\log\big|u a_k^2 + v a_k + w\big|\quad\text{and}\quad \omega_{33}(q) \sim -\log\big|u a_l^2 + v a_l + w\big|.
\]

\textbf{Case $\mathcal{B}_{\mathrm{ab}}$}: The stratum $\mathcal{B}_{\mathrm{ab}}$ is identified with the complement of six lines through the origin in $\mathbb{C}^2$, defined by:
\[
\mathcal{B}_{\mathrm{ab}} \coloneqq \left\{ L(z)^2 \frac{\mathrm{d}z^2}{y^2} \;\middle|\; L(a_k) \neq 0 \text{ for } 1 \le k \le 6 \right\},
\]
where $L(z)$ is a linear polynomial. By Theorem \ref{main Bab}, the coefficients $\omega_{ij}(q)$ extend continuously to $q_0$ for $(i,j) \neq (3,3)$, while $\omega_{33}(q)$ is unbounded near the stratum. Take $q_0 = (c_2 z^2 + c_1 z + c_0) \frac{\mathrm{d}z^2}{y^2} \in \mathcal{B}_{\mathrm{ab}}$ and employ the local coordinates from \eqref{5e6}. Then, the intersection $\mathcal{U} \cap \mathcal{B}_{\mathrm{ab}}$ is the zero locus of the discriminant $\Delta(u,v,w) \coloneqq (c_1 + v)^2 - 4(c_2 + u)(c_0 + w)$.

Recall that $q_0$ possesses two double zeros, each of which splits into a pair of simple zeros for every regular $q \in \mathcal{B}$ near $q_0$. Denote by $d_{q,1}$ and $d_{q,2}$ the distances between the zeros in each respective pair. An analysis similar to that of Claim~\ref{log estimate} applies, yielding
\[
\omega_{33}(q) \sim -\big(\log d_{q,1} + \log d_{q,2}\big) \sim -\log \big| \Delta(u,v,w) \big|, \qquad \mathcal{B}^{\mathrm{reg}} \ni q \to q_0.
\]

\section{Future Research Topics}
\label{Future}

Our study characterizes the singularities of $\omega_{\mathrm{SK}}$ near the strata $\mathcal{B}_d$ (for $0 \leq d \leq 2g-2$) and the abelian stratum $\mathcal{B}_{\mathrm{ab}}$. A natural progression would be to investigate the singularity behavior near a stratum $\mathcal{B}_{\mathbf{p}}$ associated with a general partition $\mathbf{p}$ of $4g-4$, specifically cases where $\mathbf{p}$ contains components with multiplicity exceeding $2$.

However, our approach does not directly generalize to $\mathcal{B}_{\mathbf{p}}$ for two primary reasons. First, when the partition length satisfies $|\mathbf{p}| < 2g-2$, Lemma~2.2 in \cite{he2025asymptotics} becomes inapplicable, requiring a reassessment of the smoothness of $\mathcal{B}_{\mathbf{p}}$. Second, our proof relies on constructing curves on $\widetilde{\Sigma_{q_0}}$ and deforming them to $\Sigma_{q}$ to form a symplectic basis. This construction and deformation depend on the specific topology of the spectral cover, which remains elusive for higher-order singularities.

Furthermore, no natural special Kähler structure exists on a general stratum $\mathcal{B}_{\mathbf{p}}$ as the subintegrable system described by Hitchin vanishes. Nevertheless, Horn \cite{horn2022semi} demonstrated that $\mathcal{B}_{\mathbf{p}}$ still serves as a base for a family of tori, extending the theory beyond the framework of \cite{hitchin2021integrable}.

This paper characterizes the singularity of the limiting metric at the origin along the complex line $\mathcal{L}_{q_0}$ for any $q_0 \in \mathcal{B}_d$ or $\mathcal{B}_{\mathrm{ab}}$. Beyond this radial behavior, describing the full singularity of $\omega_{\mathrm{SK}}$ in a punctured neighborhood of $0 \in \mathcal{B}$ remains a fundamental challenge.

Hitchin \cite{hitchin2021integrable} asserted that the K\"ahler potential $\mathcal{K}_0$ admits a non-smooth extension to the stratum $\mathcal{B}_d$. In this study, we prove that this extension is $C^1$ continuous for $\mathcal{B}_d$ and $\mathcal{B}_{2g-2}$, while it is continuous for $\mathcal{B}_{\mathrm{ab}}$. A natural subsequent question concerns the precise regularity of these extensions. Since higher-order derivatives of $\mathcal{K}_0$ involve variations of the period matrix $\tau_{ij}$---as shown in \eqref{3e16}---the analysis of $\tau_{ij}$ in Baraglia--Huang \cite{baraglia2017special} may provide the tools necessary to refine the regularity bounds near the boundary strata.

Finally, it is natural to ask whether our methodologies extend to the parabolic or wild $\mathrm{SL}_n(\mathbb{C})$ and $\mathrm{GL}_n(\mathbb{C})$ Hitchin systems. For $n > 2$, the geometry of the spectral covers becomes significantly more intricate, as the Hitchin base $\mathcal{B}$ involves differentials of higher-degree. While the general strategy of period mappings remains applicable, constructing singular models for the period matrices would require a more refined treatment of higher-order branch points. \vspace{0.5cm}

\noindent {\bf Acknowledgement.} 
\addcontentsline{toc}{section}{Acknowledgement} 
We are deeply grateful to Nigel Hitchin for pointing out the perspective of extending the Kähler potential from $\mathcal{B}^{\mathrm{reg}}$ to $\mathcal{B}_d$, and for his inspiring discussions on the regularity of this extension. We would like to thank Erwan Lanneau for his generous help in understanding the structure of the moduli space of Riemann surfaces together with their quadratic differentials. A special mention is reserved for David Baraglia whose contributions were pivotal to this study; his expertise regarding the existence of trivializing discs provided the foundational framework essential for our developments. We are also grateful to Laura Fredrickson for insightful discussions on the moduli space of parabolic $\mathrm{SL}(2,\mathbb{C})$ Higgs bundles, and to Johannes Horn for sharing his insights into the structure of the fibers over the discriminant locus of the Hitchin base, in particular the Abelian subvarieties contained therein. The last author would like to express his sincere gratitude to Mao Sheng for introducing him to the recent work of He–Horn–Li \cite{he2025asymptotics}. We also thank Andrew Neitzke for his lecture notes \cite{neitzke2016moduli} on Higgs bundle moduli spaces and for patiently answering our detailed questions. Finally, our thanks go to Nianzi Li for clarifying the smooth structure of $\mathcal{B}_{2g-2}$, and to Sicheng Lu for assistance in understanding symplectic bases on Riemann surfaces.

Zhenxi Huang is supported by the Guangzhou Municipal Science and Technology Bureau, Guangzhou Key Research and Development Program (Grant No. 2024A04J3560). Bin Xu is supported in part by the Project of Stable Support for Youth Team in Basic Research Field, Chinese Academy of Sciences (Grant No. YSBR-001) and by the National Natural Science Foundation of China (Grant No. 12271495).

\bibliographystyle{plain}
\bibliography{Ref}
\end{document}